\documentclass[a4paper,11pt]{article}  
\usepackage[english]{babel}
\usepackage{mathptmx}
\usepackage{berasans}
\usepackage{beramono}

\usepackage[T1]{fontenc}
\usepackage[latin9]{inputenc}
\usepackage{geometry}
\usepackage{color}
\usepackage{babel}
\usepackage{float}
\usepackage{amsthm}
\usepackage{amsmath}
\usepackage{amssymb}
\usepackage{graphicx}
\usepackage{esint}
\usepackage{amsfonts}
\usepackage{graphicx}
\usepackage{amsmath, bbm, amssymb}
\usepackage{amsthm}
\usepackage{amsmath}
\usepackage{amssymb}
\usepackage{enumerate}
\usepackage{color, soul}

\usepackage{amsmath,amsfonts}
\usepackage{graphicx}
\usepackage[colorlinks=true, allcolors=blue]{hyperref}
\usepackage[authoryear]{natbib}

\theoremstyle{definition}
\newtheorem{defn}{Definition}
\theoremstyle{plain}
\newtheorem{lem}[defn]{Lemma}
\theoremstyle{definition}
\newtheorem{example}[defn]{Example}
\theoremstyle{plain}
\newtheorem{prop}[defn]{Proposition}
\theoremstyle{plain}
\newtheorem{thm}[defn]{Theorem}
\theoremstyle{definition}
\newtheorem{rem}[defn]{Remark}
\theoremstyle{plain}

\ifx\proof\undefined\
\newenvironment{proof}[1][\proofname]{\par
	\normalfont\topsep6\p@\@plus6\p@\relax
	\trivlist
	\itemindent\parindent
	\item[\hskip\labelsep
	\scshape
	#1]\ignorespaces
}{%
	\endtrivlist\@endpefalse
}
\providecommand{\proofname}{Proof}
\fi

\usepackage{indentfirst}

\makeatother

\begin{document}
	
	\title{\textbf{Quasi-copulas with a prescribed  sub-track section}}

	\author{\textit{Jiehua Xie$^{\rm a}$,\ \ 
			Wei Zou$^{\rm b,c}$\thanks{Corresponding
				author.  \textit{Email-address:} zouwei@juwp.edu.cn (Wei Zou).}}
	\\
	\small  \textit{$^{\rm a}$School of Business Administration, Jiangxi University of Water Resources and Electric Power, Jiangxi 330099, P.R.China
	}
		\\
		\small  \textit{$^{\rm b}$School of Science, Jiangxi University of Water Resources and Electric Power, Jiangxi 330099, P.R.China} 
		\\ 
		\small  \textit{$^{\rm c}$The Key Laboratory of Engineering Mathematics and Advanced Computing, }
		\\ 
		\small  \textit{Jiangxi University of Water Resources and Electric Power, Jiangxi 330099, P.R.China} }

	\date{October 2026}
	\maketitle
	\begin{abstract}
		In this paper, we consider the quasi-copulas and copulas with a prescribed sub-track section, i.e., with given values along the curve segment connecting the points $(x_0,0)$ and $(1,1-x_0)$ for $0<x_0<1$. By employing the technique based on the $W$-ordinal sum, we provide  methods for constructing quasi-copulas and copulas whose sub-track section coincides with a given one. Then, we further investigate construction methods for symmetric quasi-copulas and copulas with a prescribed sub-track section. Furthermore, we determine  the best-possible bounds for the class of quasi-copulas sharing a common sub-track section, and verify that the lower bound obtained is also the best-possible lower bound for the class of copulas with a common sub-track section. Finally, we examine several  properties of the bound (quasi-)copulas with a prescribed sub-track section, including their comparison with bound (quasi-)copulas with a given track section or a sub-diagonal section, as well as their statistical characteristic and concordance order.
		\\
		\textbf{\textit{Keywords}}\textit{:} Quasi-copula; Copula; Sub-track section;  Best-possible bound.
	\end{abstract}
	
	\section{Introduction}
	
Quasi-copulas and copulas are fundamental tools in a wide range of applications \citep{McNeil-et-al.(2015),2022Multivariate,2023Asymptotic}. A copula is
	a joint distribution function whose univariate margins are uniformly distributed on [0, 1].
In view of  Sklar's Theorem \citep{sklar1959fonctions}, each joint distribution function of a random vector can be expressed in terms of its marginal
	distribution functions and an associated copula. The concept of a quasi-copula is more general than
	that of a copula.  The quasi-copula 
	was  originally introduced in the field of probability \citep{1993On} and later  characterized in \cite{2010On}. Both copulas and quasi-copulas are 1-Lipschitz aggregation operators with neutral element 1 \citep{H2008On}, and they
	are also employed in aggregation processes due to their stability property \citep{Genest1999A}. For more detailed introduction about quasi-copulas and copulas, we refer to   \cite{durante2015principles},  \cite{Joe(1997)} and \cite{Nelsen(2006)}. 
	
	In the theoretical research on quasi-copulas and copulas, considerable attention has been devoted to the construction of quasi-copulas and copulas with prescribed values along specified
	sections in the unit square, as well as to the determination of best-possible bounds for the resulting function classes \citep{Fredricks(2002),2001BestBounds}. In particular, since the tail dependence coefficients of a copula are completely determined by its \textit{diagonal section},   the problems of constructing quasi-copulas and copulas with a prescribed diagonal section and deriving the corresponding best-possible bounds have been extensively investigated  \citep{2024Exact,Butucea2015,2009New,2008On,2008Onsoft}.

	More recently, research has increasingly focused on the more general setting, that is,  bivariate  quasi-copulas and copulas  with a prescribed  \textit{track section} (also referred to as \textit{curvilinear section}) along a curve  $\phi$ on the unit square, where $\phi:[0,1]\rightarrow[0,1]$ is  a strictly increasing and continuous function satisfying $\phi(0)=0$ and $\phi(1)=1$. 
	For instance, \citet{Fernandez2017Copulas} established existence conditions for  copulas with a prescribed track section $\Gamma_{C,\phi}(u)=C\big(u,\phi(u)\big)$, $u\in[0,1]$, where $C$ is a bivariate copula. Subsequently, \citet{2021Curved},  \citet{2024curve} and \citet{2023Curvilinear}  introduced the curvilinear patchwork operation as a tool for  constructing  more copulas and quasi-copulas sharing a common track section from the existing ones, while \citet{2026on} and  \citet{2026con} addressed the construction  of absolutely continuous copulas with a prescribed  track section. Alongside these contributions on construction methods, the problem of identifying best-possible bounds for classes of copulas and quasi-copulas with a prescribed track section has also attracted considerable interest. For a given track section  $\Gamma_{\phi}:[0,1]\rightarrow[0,1]$, 
	it has been established that the \textit{Bertino copula with the given curvilinear section}, defined as 	\begin{equation}\label{eq:Bertino copula+in}B_{\Gamma_{\phi}}(u,v)
		=\left\{\begin{array}{ll}
			v-\min\{\phi(t)-\Gamma_{\phi}(t)| t\in[\phi^{-1}(v),u]\},& \ v<\phi(u),\\
			u-\min\{t-\Gamma_{\phi}(t)|t\in[u,\phi^{-1}(v)] \},&  \ v\ge \phi(u), 
		\end{array}\right.
	\end{equation} 
	is the best-possible lower bound for the class of copulas sharing the common track section  $\Gamma_{\phi}$ \citep{De2019On}, and it   simultaneously serves as the best-possible lower bound for the corresponding class of quasi-copulas   \citep{2022Best}. The best-possible upper bound for the class of  quasi-copulas with the common track section $\Gamma_{\phi}$ is given by the  \textit{$\Gamma_{\phi}$-curvilinear quasi-copula} $A_{\Gamma_{\phi}}$ \citep{2022Best}, expressed as 	\begin{equation}\label{eq:A copula}
	A_{\Gamma_{\phi}}(u,v)	=\left\{\begin{array}{ll}
			\min\big\{v,u-\max\{t-\Gamma_{\phi}(t)|t\in[\phi^{-1}(v),u]\}\big\},& \ v<\phi(u),\\
			\min\big\{u,v-\max\{\phi(t)-\Gamma_{\phi}(t)|t\in[u,\phi^{-1}(v)]\}\big\},&  \ v\ge \phi(u).
		\end{array}\right.
	\end{equation} 
Furthermore, the \textit{$\Gamma_{\phi}$-curvilinear copula} 	$K_{\Gamma_{\phi}}$ defined by
\begin{equation}\label{eq:K copula}
	K_{\Gamma_{\phi}}(u,v)=\min\{u,v,\frac{\Gamma_{\phi}(u)+\Gamma_{\phi}\big(\phi^{-1}(v)\big)}{2}\},
\end{equation}
is a copula if and only if the functions $2t-\Gamma_{\phi}(t)$ and $2t-\Gamma_{\phi}\big(\phi^{-1}(t)\big)$ are nondecreasing
on [0, 1], and it is 
the best-possible upper bound for the class of copulas sharing the common track section $\Gamma_{\phi}$ that are symmetric with respect to the curve $\phi$ on a specified region of the unit square \citep{2022Best}.  A complete characterization of these maximal copulas with the given track
section $\Gamma_{\phi}$ was recently proposed by \citet{2026a}. 

The present work focuses on  quasi-copulas and copulas with a more general type of section, that is, a given \textit{sub-track section} $\Gamma_{\phi_{x_0}}:[0,1-x_0]\rightarrow[0,1-x_0]$, corresponding to given values  along the curve segment joining the points $(x_0,0)$ and $(1,1-x_0)$ for $0\le x_0<1$. Notice that the sub-track section $\Gamma_{\phi_{x_0}}$ in the case $x_0=0$ reduces to the standard track section.  Given values on the more general section $\Gamma_{\phi_{x_0}}$, the first natural problem  that arises is the following one: Does there exist a copula or a quasi-copula $Q$ whose sub-track section 
coincides with
$\Gamma_{\phi_{x_0}}$?  A second question is then: What
are the best-possible bounds for the class of all the quasi-copulas sharing this sub-track section $\Gamma_{\phi_{x_0}}$? While these questions have been fully resolved for the particular case of $x_0 = 0$ (i.e., for 
quasi-copulas and copulas with a prescribed track section), the general case $0<x_0<1$ remains open.  As will be seen, we  answer both questions affirmatively by constructing some 
quasi-copulas and copulas with the prescribed sub-track section and subsequently deriving the corresponding lower and upper bounds.

In this paper, we  employ the technique based on the $W$-ordinal sum to develop construction methods for copulas and quasi-copulas with a prescribed sub-track section $\Gamma_{\phi_{x_0}}$. Using the functions  $B_{\Gamma_{\phi}}$, $A_{\Gamma_{\phi}}$ and $K_{\Gamma_{\phi}}$ presented in \eqref{eq:Bertino copula+in}-\eqref{eq:K copula}, we construct three families of quasi-copulas and copulas whose  sub-track section coincides with the given one $\Gamma_{\phi_{x_0}}$. We further extend these constructions to the symmetric setting, obtaining three families of symmetric quasi-copulas and one family of symmetric copulas with the prescribed sub-track section $\Gamma_{\phi_{x_0}}$. Furthermore, we establish the best-possible bounds for the class of quasi-copulas sharing a common sub-track section, and show that the lower bound obtained is also best-possible lower bound for the corresponding class   of copulas. Finally, some properties of the bound (quasi-)copulas with a prescribed sub-track section are also discussed, including  comparisons with bound (quasi-)copulas with a given track section or a sub-diagonal section, as well as their statistical characteristic and  concordance order.

	The remainder of this paper is organized as follows.  Section 2 reviews definitions and fundamental properties concerning  quasi-copulas and copulas, and introduces the preliminaries on sub-track sections, which will be used throughout. In Section 3, we discuss  construction methods for  quasi-copulas and copulas whose  sub-track section coincides with a given one.  Section 4 presents families of (quasi-)copulas  and symmetric quasi-copulas with a prescribed sub-track section, while Section 5 is devoted to the construction of  symmetric copulas with such a section. In  Section 6, we establish the best-possible bounds for the class of quasi-copulas sharing a common sub-track section.  
Section 7 investigates some properties of these  bound (quasi-)copulas, and 
	conclusions are drawn in Section 8.

	\section{Preliminaries}
	
	In this section, we review  definitions and some fundamental properties concerning quasi-copulas and copulas. We also introduce the notations of track sections and sub-track sections. These definitions and notions are preliminaries for investigating   quasi-copulas and copulas  with a prescribed sub-track section. 
	
	\subsection{Quasi-copula and copula}
	
	This subsection presents the definitions of bivariate quasi-copulas and copulas, and briefly reviews their fundamental properties.
	
	We begin by presenting the notion of the bivariate quasi-copula.
	
	\begin{defn}\citep{Nelsen(2006)}\label{df:qcopula}
		A two-dimensional function $Q$: $[0,1]^2\rightarrow[0,1]$ is said to be a \textit{bivariate quasi-copula} (for simplicity, a \textit{quasi-copula}) if it  satisfies the following conditions:
		\begin{itemize}
			\item[(Q1)] (Boundary conditions) $Q(0,t)=Q(t,0)=0$ and $Q(1,t)=Q(t,1)=t$ for all $t\in[0,1]$;
			\item[(Q2)] (Nondecreasing condition) $Q$ is nondecreasing in each argument, that is, for every fixed $v\in[0,1]$,  the mappings $u\mapsto Q(u, v)$ and $u \mapsto Q(v, u)$ are both nondecreasing; 
			\item[(Q3)] (Lipschitz condition) for
			all $u_1$, $u_2$, $v_1$ and $v_2$ in $[0, 1]$,
			
			\begin{equation}\label{Lp}
				|Q(u_1,v_1)-Q(u_2,v_2)|\le |u_1-u_2|+|v_1-v_2|. 
			\end{equation}	 
		\end{itemize}
	\end{defn}	

A copula is a special sub-class of quasi-copulas, characterized by additional conditions beyond those required for quasi-copulas.
	
	\begin{defn}\citep{Nelsen(2006)}\label{df:copula}
		A two-dimensional function $C$: $[0,1]^2\rightarrow[0,1]$ is said to be a \textit{bivariate copula} (for simplicity, a   \textit{copula}) if it satisfies the following conditions:
		\begin{itemize}
			\item[(C1)] (Boundary conditions) $C(0,t)=C(t,0)=0$ and $C(1,t)=C(t,1)=t$ for all $t\in[0,1]$;
			\item[(C2)] (2-increasing condition) for all $u_1, u_2, v_1$ and $v_2$ in $[0, 1]$  with  $u_1\le u_2$ and $v_1\le v_2$, \begin{equation}\label{qe:V}V_C([u_1,u_2]\times[v_1,v_2]):=C(u_2,v_2)-C(u_2,v_1)-C(u_1,v_2)+C(u_1,v_1)\ge 0.\end{equation}
			Here, $V_C([u_1,u_2]\times[v_1,v_2])$ is referred to as the $C$-\textit{volume} of the rectangle $[u_1,u_2]\times[v_1,v_2]$.	 
		\end{itemize}
	\end{defn}
\noindent	 

It follows directly from Definitions \ref{df:qcopula} and \ref{df:copula} that every copula $C$  satisfies the nondecreasing condition (Q2) and the Lipschitz condition (Q3). Therefore, every copula is also a quasi-copula. The converse, however, does not hold in general. There exist quasi-copulas that fail to satisfy the 2-increasing condition (C2), and such quasi-copulas are 
called \textit{proper} quasi-copulas. A copula or a  quasi-copula $Q$ is said to be symmetric if
$Q(u, v) = Q(v, u)$ for all $(u, v)\in[0, 1]^2$.

Recall that a copula is equivalently characterized as a joint distribution function whose univariate margins are uniformly distributed on $[0, 1]$. 
	Classical examples of bivariate copulas include the product copula $\Pi(u,v)=uv$, the Fr\'{e}chet-Hoeffding upper bound $M(u,v)=\min\{u,v\}$ and the Fr\'{e}chet-Hoeffding  lower bound  $W(u,v)=\max\{u+v-1, 0\}$, for $(u,v)\in [0,1]^2$, which correspond respectively to the notions of independence, comonotonicity, and  counter-monotonicity between two random variables.  Notice easily that all the three copulas are symmetric. Moreover, for any copula $C$, the following bounds are well-known:  
	$$W(u,v)\le C(u,v)\le M(u,v),\ \ (u,v)\in[0,1]^2.$$
	For more examples and properties of quasi-copulas and  copulas, please see  \cite{durante2015principles} and \cite{Nelsen(2006)}.

	\subsection{Track section and sub-track section}
	
	In this subsection, we present the curves that will be considered throughout this paper, and then introduce the notions of track section  and sub-track section  along prescribed curves on the unit square. 
	
	Let $\phi$: $[0,1]\rightarrow$ $[0,1]$ be  a strictly increasing continuous function satisfying  $\phi(0)=0$ and  $\phi(1)=1$. Analogously, for any  $0\le x_0< 1$, let $\phi_{x_0}$: $[0,1-x_0]\rightarrow$ $[0,1-x_0]$ be a strictly increasing continuous function  with $\phi_{x_0}(0)=0$ and  $\phi_{x_0}(1-x_0)=1-x_0$. Since $\phi_{x_0}$ is strictly increasing and continuous, its inverse $\phi^{-1}_{x_0}$ is well defined on $[0,1-x_0]$. 
	 For notational convenience, throughout the remainder of this paper, the curve  $\{\big(t,\phi(t)\big)|t\in[0,1]\}$ on the unit square will be referred to as the curve  $\phi$, and  the curve $\{\big(x_0+\phi^{-1}_{x_0}(t),t\big)|t\in[0,1-x_0]\}$ will be called the \textit{$x_0$-curve}  $\phi_{x_0}$, which is illustrated in Figure  \ref{fig:1}(a).  
	Denote the set of all curves $\phi$ as
	$$\Phi:=\{\phi: [0,1]\rightarrow [0,1]\ |\ \phi {\rm\ is\ a\ strictly\ increasing\  continuous\  function\   with\  } \phi(0)=0\ {\rm and}\ \phi(1)=1\},$$
	and let 
	$$\Phi_{x_0}:=\{\phi_{x_0}: [0,1-x_0]\rightarrow [0,1-x_0]\ |\ \phi_{x_0} {\rm\ is\ a\ strictly\  increasing\ continuous\ function\  with}\ $$
	$$\ \ \ \ \ \ \ \ \ \ \ \ \phi_{x_0}(0)=0\ {\rm and}\ \phi_{x_0}(1-x_0)=1-x_0\}$$
	be the set of all $x_0$-curves $\phi_{x_0}$. Observe that for any fixed $0\le x_0<1$ and any $\phi\in\Phi$, the function defined by  $\phi_{x_0}(t)=(1-x_0)\phi(\frac{t}{1-x_0})$, $t\in[0,1-x_0]$ belongs to $\Phi_{x_0}$. Conversely, for any $\phi_{x_0}\in\Phi_{x_0}$,
	$\frac{\phi_{x_0}\big((1-x_0)t\big)}{1-x_0}$, $t\in[0,1]$ is an element of $\Phi$. 

		We now present the notion of the track section of a quasi-copula along a curve $\phi\in\Phi$.
	
	\begin{defn}
		\label{track} 	For any curve $\phi\in\Phi$, the \textit{track section} (or \textit{curvilinear section}) of a quasi-copula  $Q$ along $\phi$ is the function $\Gamma_{Q,\phi}:$ $[0,1]\rightarrow [0,1]$ defined by $\Gamma_{Q,\phi}(t):=Q\big(t,\phi(t)\big)$, $t\in[0,1]$.  In the particular case where  $\phi={\rm id}$, i.e., $\phi(t)=t$ for all $t\in[0,1]$, the track section reduces to the \textit{diagonal section} of $Q$, denoted by  $\Gamma_{Q, {\rm id}}(t)=Q(t,t)=\delta_Q(t)$. 
	\end{defn}

Next, we proceed to introduce the definition of a  sub-track section along an $x_0$-curve $\phi_{x_0}\in\Phi_{x_0}$.
	
	\begin{defn}\label{sub-track}
		Given a quasi-copula $Q$ and $x_0\in[0, 1)$, the \textit{sub-track section} (or \textit{sub-curvilinear section}) of $Q$ along the $x_0$-curve $\phi_{x_0}$  is the function $\Gamma_{Q,\phi_{x_0}}:$ $[0,1-x_0]\rightarrow [0,1-x_0]$ defined by
		\begin{equation}\label{df:sub-track}
		\Gamma_{Q,\phi_{x_0}}(t):=Q\big(x_0+\phi^{-1}_{x_0}(t),t\big). 
		\end{equation}
	\end{defn}

The sub-track section of a copula $C$ along an $x_0$-curve $\phi_{x_0}$  admits a straightforward probabilistic interpretation. If $C$ is the joint distribution function of two uniformly distributed random variables $U$ and $V$ on $[0,1]$, then 
$$		
\Gamma_{C,\phi_{x_0}}(t)=C\big(x_0+\phi^{-1}_{x_0}(t), t\big)=\mathbb{P}\big(\max\big\{\phi_{x_0}(\max\{U-x_0,0\}), V\big\}\le t\big). 
$$

\begin{rem}
	\label{4} ($i$) In the special case where $x_0=0$, we have  $\Gamma_{Q,\phi_{0}}(t)=Q\big(\phi^{-1}_{0}(t),t\big)=\Gamma_{Q,\phi}\big(\phi^{-1}(t)\big)$, where $\phi=\phi_0$ and $\Gamma_{Q,\phi}(t)$ is the track section of $Q$ along the curve $\phi$ given in Definition \ref{track}.	
	
	($ii$) Note that for any $x_0\in[0, 1)$, when $\phi_{x_0}(t)=t$ for all $t\in[0,1-x_0]$, the sub-track section  $\Gamma_{Q,\phi_{x_0}}(t)=Q(x_0+t,t)$ reduces to the \textit{sub-diagonal section} $\delta_{Q, x_0}(t)$ of the quasi-copula $Q$ \citep{2008quasi-copulas}. 
	\end{rem}

\begin{rem}  Using the equation $v=\phi_{x_0}(u-x_0)$, we obtain the $x_0$-curve  $\{\big(x_0+\phi^{-1}_{x_0}(t), t\big)|t\in[0,1-x_0]\}$ by setting $v=t$. Then, in Definition \ref{sub-track}, the corresponding expression of sub-track section is given by  $Q\big(x_0+\phi^{-1}_{x_0}(t), t\big)$. This formulation differs from the definition of a track section of $Q$ along a curve $\phi\in \Phi$ as presented in Definition \ref{track}, where the expression is instead chosen as $u=t$, resulting in  the curve $\{\big(t, \phi(t)\big)|t\in[0,1]\}$ and the  track section  $Q\big(t, \phi(t)\big)$. In the special case where $x_0=0$, from Remark \ref{4}($i$), we know that the two definitions coincide essentially. Equivalently, the sub-track section of $Q$ along the $x_0$-curve $\phi_{x_0}$ may alternatively be defined as $Q\big(x_0+t, \phi_{x_0}(t)\big)$, $t\in[0,1-x_0]$,   which is essentially consistent with Definition \ref{sub-track}. The expression adopted in Definition \ref{sub-track} is motivated by its convenience for the analytical arguments in the subsequent sections. 
\end{rem}

\begin{example}\label{ex:1-}
For any $x_0$-curve $\phi_{x_0}\in \Phi_{x_0}$, the sub-track  of the product copula $\Pi$, the Fr\'{e}chet-Hoeffding lower bound  $W$ and the Fr\'{e}chet-Hoeffding upper bound  $M$ along this curve are given respectively by  $t\big(x_0+\phi_{x_0}^{-1}(t)\big)$, $\max\{x_0+\phi_{x_0}^{-1}(t)+t-1,0\}$  and $\min\{t, x_0+\phi_{x_0}^{-1}(t)\}$, $t\in[0,1-x_0]$. In particular, if $\phi_{x_0}(t)\le t$ for all $t\in[0,1-x_0]$, then $t\le x_0+\phi_{x_0}^{-1}(t)$ for every  $0\le x_0<1$. In this case, the sub-track section  of  the Fr\'{e}chet-Hoeffding upper bound $M$ along the $x_0$-curve $\phi_{x_0}$  is $\min\{t, x_0+\phi_{x_0}^{-1}(t)\}=t$, $t\in[0,1-x_0]$. 
\end{example}

In the following, we define a class of functions associated with the $x_0$-curve $\phi_{x_0}$.	

	\begin{defn}\label{sub-track1}
		Given $x_0\in[0, 1)$ and an $x_0$-curve $\phi_{x_0}\in\Phi_{x_0}$, a function $\Gamma_{\phi_{x_0}}$: $[0,1-x_0]\rightarrow$ $[0,1-x_0]$ is called a \textit{$\phi_{x_0}$-function}  if it fulfills   the following conditions:  		\begin{itemize}
			\item[(S1)] $ \max\{x_0+t+ \phi^{-1}_{x_0}(t)-1, 0\}\le \Gamma_{\phi_{x_0}}(t)\le \min\{x_0+\phi^{-1}_{x_0}(t), t\}$ for every $t\in[0,1-x_0]$; 
			\item[(S2)] $0\le \Gamma_{\phi_{x_0}}(t_{2})-\Gamma_{\phi_{x_0}}(t_{1})\le t_{2}-t_{1}+	\phi^{-1}_{x_0}(t_{2})-\phi^{-1}_{x_0}(t_{1})$ for any $0\le t_{1}\le t_{2}\le 1-x_0$.	 
		\end{itemize}
	\end{defn}

\begin{rem} For a $\phi_{x_0}$-function $\Gamma_{\phi_{x_0}}$, 
Condition (S1) implies that $\Gamma_{\phi_{x_0}}(0)=0$ and  $\Gamma_{\phi_{x_0}}(1-x_0)=1-x_0$. 
\end{rem}
The sub-track section $\Gamma_{Q,\phi_{x_0}}$ of any quasi-copula $Q$ along an  $x_0$-curve $\phi_{x_0}$ is necessarily a $\phi_{x_0}$-function. Indeed, the conditions stated above follow directly from the basic properties of quasi-copulas and copulas. Specifically, 
Condition (S1) is an  immediate consequence of the inequality $W(u,v)\le Q(u,v)\le M(u,v)$, which holds for all $(u,v)\in[0, 1]^2$. Condition (S2), in turn, follows from  the Lipschitz condition \eqref{Lp} together with the nondecreasing property of quasi-copulas. 

The following examples presents some typical  $\phi_{x_0}$-functions. 
\begin{example}\label{ex:1}
Let $\phi_{x_0}$ be an $x_0$-curve given by $\phi_{x_0}(t)=(1-x_0)\big(\frac{t}{1-x_0}\big)^{\beta}$, $t\in[0, 1-x_0]$, $\beta \ge 1$. It can be verified that the function $g(t)=t\big(x_0+(1-x_0)(\frac{t}{1-x_0})^{1/\beta}\big)^{\alpha}$, $0\le \alpha\le 1$ satisfies  Conditions  (S1) and (S2), and thus  is a $\phi_{x_0}$-function.
\end{example}

\begin{example}\label{ex:1+}
Consider an $x_0$-curve  $\phi_{x_0}(t)=\frac{(1-x_0)t}{2(1-x_0)-t}$, $t\in[0, 1-x_0]$. For any $\theta\in[-1,1]$, a direct verification shows that the function $g(t)=\frac{t\big((2-x_0)t+x_0(1-x_0)\big)}{t+(1-x_0)\big(1-\theta(1-x_0-t)(1-t)\big)}$,  $t\in[0,1-x_0]$  satisfies Conditions (S1) and (S2). Consequently, $g$ is a $\phi_{x_0}$-function.
\end{example}

\subsection{Notational preliminaries for sub-track sections}

For any fixed $x_0\in[0,1)$ and an $x_0$-curve $\phi_{x_0}\in\Phi_{x_0}$, we introduce the following sub-domains of the unit square, which are illustrated in Figure \ref{fig:1} and will be used in the subsequent analysis:
$$T_{U,\phi_{x_0}}:=\big\{(u,v)\in[0,1]^2|\phi_{x_0}(\max\{u-x_0,0\})\le v\big\},\ \ T_{L,\phi_{x_0}}:=\big\{(u,v)\in[0,1]^2|\phi_{x_0}(\max\{u-x_0,0\})> v\big\},$$
$$D_{U,x_0}:=[0,x_0]\times[1-x_0,1],\ \ D_{L,x_0}:=[x_0,1]\times[0,1-x_0],$$
$$S_{L,x_0}:=[0,x_0]^2,\ \ S_{U,x_0}:=[1-x_0,1]^2\ \ {\rm and}\ \  S_{x_0}:=S_{L,x_0}\cup S_{U,x_0}.$$

\begin{figure}
	\centering
	\begin{minipage}[t]{0.48\linewidth}
		\centerline{
			\includegraphics[scale=0.666]{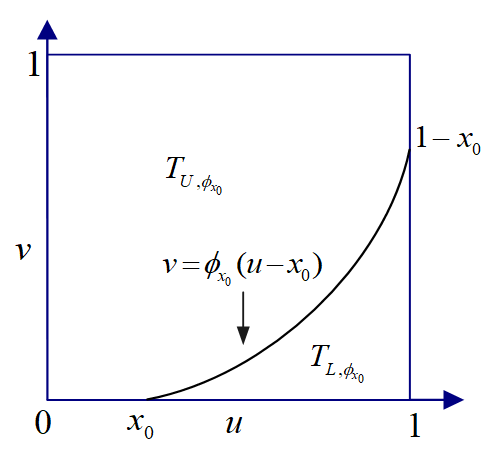}}
		\centerline{(a)}
	\end{minipage}%
	\hfill
	\begin{minipage}[t]{0.46\linewidth}
		\centerline{
			\includegraphics[scale=0.626]{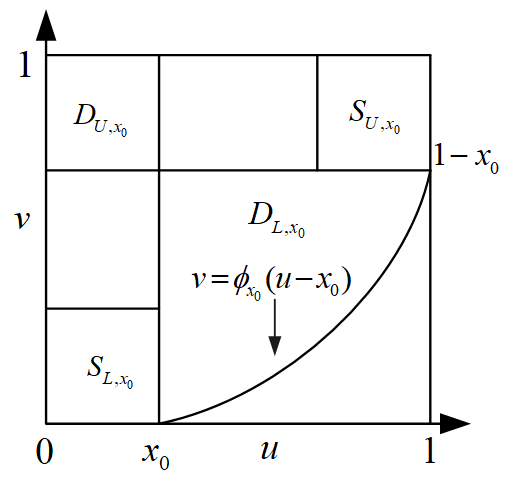}}
		\centerline{(b)}
	\end{minipage}
	\caption{ (a) The  $x_0$-curve $\phi_{x_0}$ as well as sub-domains  $T_{U,\phi_{x_0}}$ and $T_{L,\phi_{x_0}}$. (b) Sub-domains  $D_{U,x_0}$, $D_{L,x_0}$, $S_{U,x_0}$ and $S_{L,x_0}$.
		\label{fig:1}}
\end{figure}

Note that $T_{U,\phi_{x_0}}\cup T_{L,\phi_{x_0}}=[0,1]^2$. If $x_0< 1/2$, then the  sub-domains $S_{U,x_0}$ and $S_{L,x_0}$ are disjoint. By contrast, the  sub-domains $D_{U,x_0}$ and $D_{L,x_0}$ intersect in exactly the single point $(x_0, 1-x_0)$, regardless of the value of $x_0$.

We further introduce two auxiliary functions $m_{\phi_{x_0}}:$
$ [0, 1]^2
\rightarrow [0, 1-x_0]$ and $M_{\phi_{x_0}}:$
$ [0, 1]^2
\rightarrow [0, 1-x_0]$ defined as follows:
\begin{equation}\label{mx}
m_{\phi_{x_0}}(u,v):=\min\big\{\phi_{x_0}(\max\{u-x_0, 0\}),v\big\}
\end{equation}
and 
\begin{equation}\label{Mx}
M_{\phi_{x_0}}(u,v):=\min\Big\{\max\big\{\phi_{x_0}(\max\{u-x_0, 0\}),v\big\},1-x_0\Big\}.
\end{equation}
 The truncation at 0 and $1-x_0$, respectively, ensures 
that $\Gamma_{\phi_{x_0}}$ and any function derived from it can be applied to $m_{\phi_{x_0}}$ and $M_{\phi_{x_0}}$, respectively, for all   $(u, v)\in[0, 1]^2$. It is easy to see that 
$$m_{\phi_{x_0}}(u,v)=\min\{\phi_{x_0}(u-x_0),v\}\ \ {\rm and}\ \ M_{\phi_{x_0}}(u,v)=\max\{\phi_{x_0}(u-x_0),v\}$$
for all $(u,v)\in D_{L,x_0}$. Note also that $(u, v)\in T_{U,\phi_{x_0}}\cap D_{L,x_0}$ if and only if $m_{\phi_{x_0}}(u,v)= \phi_{x_0}(u-x_0)$ and $M_{\phi_{x_0}}(u,v)=v$, whereas $(u, v)\in T_{L,\phi_{x_0}}\cap D_{L,x_0}$ if and only if $m_{\phi_{x_0}}(u,v)= v$ and $M_{\phi_{x_0}}(u,v)=\phi_{x_0}(u-x_0)$.

For an arbitrary $\phi_{x_0}$-function  $\Gamma_{\phi_{x_0}}$,  we consider the functions $\hat \Gamma^I_{\phi_{x_0}}:[0,1-x_0]\rightarrow [0,1-x_0]$ and $\hat \Gamma^{II}_{\phi_{x_0}}:[0,1-x_0]\rightarrow [0,1-x_0]$ 
defined by
$$\hat \Gamma^I_{\phi_{x_0}}(t):=\phi^{-1}_{x_0}(t)-\Gamma_{\phi_{x_0}}(t)\ \ 
{\rm and} 
\ \  \hat \Gamma^{II}_{\phi_{x_0}}(t):=t-\Gamma_{\phi_{x_0}}(t),\ t\in[0,1-x_0].$$

It is immediate to verify that $\hat \Gamma^I_{\phi_{x_0}}$ and $\hat\Gamma^{II}_{\phi_{x_0}}$
satisfy the following properties, all of which follow directly from Conditions (S1) and (S2):
\begin{itemize}
	\item[(D1)]   $\hat\Gamma^I_{\phi_{x_0}}(0)=\hat\Gamma^{II}_{\phi_{x_0}}(0)=\hat\Gamma^I_{\phi_{x_0}}(1-x_0)=\hat\Gamma^{II}_{\phi_{x_0}}(1-x_0)=0$;
	\item[(D2)] $t_1-t_2\le \hat\Gamma^{I}_{\phi_{x_0}}(t_2)-\hat\Gamma^{I}_{\phi_{x_0}}(t_1) \le \phi^{-1}_{x_0}(t_{2})-\phi^{-1}_{x_0}(t_{1})$ and $\phi^{-1}_{x_0}(t_{1})-\phi^{-1}_{x_0}(t_{2})\le \hat\Gamma^{II}_{\phi_{x_0}}(t_2)-\hat \Gamma^{II}_{\phi_{x_0}}(t_1) \le t_2-t_1$  for all $0\le t_1\le t_2\le 1-x_0$; 
	\item[(D3)]  $\max\{\phi^{-1}_{x_0}(t)-t,-x_0\}\le \hat\Gamma^{I}_{\phi_{x_0}}(t)\le \min\{\phi^{-1}_{x_0}(t),1-x_0-t\}$ and $\max\{0,t-x_0-\phi^{-1}_{x_0}(t)\}\le \hat\Gamma^{II}_{\phi_{x_0}}(t)\le \min\{t,1-x_0-\phi^{-1}_{x_0}(t)\}$ for every $0\le t\le 1-x_0$.	 
	\item[(D4)] Let $U$  and $V$ be random variables uniformly distributed on  [0, 1], and  $C$ be the copula of the pair $(U,V)$. Suppose that  $\Gamma_{\phi_{x_0}}(t)$,
	$t\in[0, 1-x_0]$ is the sub-track section of  $C$ along the $x_0$-curve $\phi_{x_0}$. Then the distribution functions of $\max\big\{\phi_{x_0}(\max\{U-x_0,0\}), V\big\}$ and $\min\big\{\phi_{x_0}(\max\{U-x_0,0\}), V\big\}$ are given by  $t-\hat\Gamma^{II}_{\phi_{x_0}}(t)$ and $t+x_0 +\hat\Gamma^{I}_{\phi_{x_0}}(t)$, $t\in[0, 1-x_0]$ respectively. Consequently, 
	\begin{equation}
	x_0+\hat\Gamma^{I}_{\phi_{x_0}}(t)+\hat\Gamma^{II}_{\phi_{x_0}}(t)=\mathbb{P}\big(\min\big\{\phi_{x_0}(\max\{U-x_0,0\}), V\big\}\le t<\max\big\{\phi_{x_0}(\max\{U-x_0,0\}), V\big\}\big).
	\end{equation}
\end{itemize}

The following auxiliary functions will also be required:
$$q^I_{\phi_{x_0}}(u,v):=\max\big\{\hat\Gamma^{I}_{\phi_{x_0}}(t)|t\in[m_{\phi_{x_0}}(u,v), M_{\phi_{x_0}}(u,v)] \big\},\ (u,v)\in[0,1]^2$$
and 
$$ q^{II}_{\phi_{x_0}}(u,v):=\max\big\{\hat\Gamma^{II}_{\phi_{x_0}}(t)|t\in[m_{\phi_{x_0}}(u,v), M_{\phi_{x_0}}(u,v)] \big\},\ (u,v)\in[0,1]^2.$$

\section{Constructions of  quasi-copulas and copulas with a given sub-track section}

For a $\phi_{x_0}$-function $\Gamma_{\phi_{x_0}}$, denote by $\mathbb{Q}_{\Gamma_{\phi_{x_0}}}$ (respectively, $\mathbb{C}_{\Gamma_{\phi_{x_0}}}$)  the class of all quasi-copulas (respectively, copulas) whose sub-track along the $x_0$-curve $\phi_{x_0}$ is $\Gamma_{\phi_{x_0}}$. For the case $x_0 = 0$,   construction methods and best-possible bounds for quasi-copulas in $\mathbb{Q}_{\Gamma_{\phi_{x_0}}}$ are already known \citep{De2019On,2022Best}. Consequently, in this paper, we restrict our attention to the case  $x_0\in(0,1)$. One possible approach for  constructing quasi-copulas with a given sub-track section is to apply bilinear transformations to quasi-copulas with prescribed track sections. From a geometrical perspective, the idea consists in rescaling and shifting a given quasi-copula such 
that it is supported on the sub-domain $D_{L,x_0}$ rather than on the whole $[0,1]^2$, and then filling the gaps on $[0, 1]^2\setminus D_{L,x_0}$ in an appropriate manner. As a result of this rescaling and  transformation, the value of the constructed  quasi-copula at the point $(x_0, 1-x_0)$
must be 0. In accordance with the results of \cite{2004W}, such a quasi-copula $Q$ admits a representation as a $W$-ordinal sum, that is,
there exist quasi-copulas $Q_1$ and $Q_2$ such that
\begin{equation}
\label{eq:W}Q(u,v)
=\left\{\begin{array}{ll}
x_0 Q_1\big(\frac{u}{x_0},\frac{x_0+v-1}{x_0}\big),& \ {\rm if}\ \ (u,v)\in D_{U,x_0},\\
(1-x_0) Q_2\big(\frac{u-x_0}{1-x_0},\frac{v}{1-x_0}\big),&  \ {\rm if}\ \ (u,v)\in D_{L,x_0},\\
W(u,v),&  \ {\rm otherwise}.
\end{array}\right.
\end{equation}

Notice that $Q$ is indeed a quasi-copula. It will be denoted by $(\langle 0,x_0, Q_1 \rangle, \langle x_0, 1, Q_2 \rangle)^W$. Furthermore, if both $Q_1$ and $Q_2$ are copulas, then the resulting function $Q$ is also  a copula \citep{2004W}.

\begin{thm}\label{prop:G}
	For a fixed $x_0\in (0, 1)$, let $\phi_{x_0}\in\Phi_{x_0}$ be an $x_0$-curve satisfying $\phi_{x_0}(t)\le t$ for all $t\in[0,1-x_0]$, and let $\Gamma_{\phi_{x_0}}$ 
	be a $\phi_{x_0}$-function. Take an arbitrary quasi-copula $Q_1$, and let $Q_2$ be a quasi-copula
whose track section along the curve $\phi(t)=\frac{\phi_{x_0}\big(t(1-x_0)\big)}{1-x_0}$, $t\in[0,1]$ is given by  
	\begin{equation}
	\label{eq:G} \Gamma_{\phi}(t)
=\frac{	\Gamma_{\phi_{x_0}}\Big(\phi_{x_0}\big(t(1-x_0)\big)\Big)}{1-x_0},\ t\in[0,1]. 
	\end{equation}
	Then, the $W$-ordinal sum $Q=(\langle 0,x_0, Q_1 \rangle, \langle x_0, 1, Q_2 \rangle)^W$ defined by \eqref{eq:W} is a quasi-copula satisfying  $\Gamma_{Q,\phi_{x_0}}=\Gamma_{\phi_{x_0}}$, i.e., $Q \in \mathbb{Q}_{\Gamma_{\phi_{x_0}}}$. Consequently, for every $\phi_{x_0}$-function  $\Gamma_{\phi_{x_0}}$  with $\phi_{x_0}(t)\le t$, $t\in [0,1-x_0]$, there exists a quasi-copula $Q$ 
	belonging to $\mathbb{Q}_{\Gamma_{\phi_{x_0}}}$. 
\end{thm}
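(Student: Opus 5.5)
The proof has three steps: check that $Q$ is a quasi-copula, compute its sub-track section, and settle existence by exhibiting a suitable $Q_2$.

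\emph{Step 1 ($Q$ is a quasi-copula).} This follows directly from the $W$-ordinal sum result of \cite{2004W} quoted after \eqref{eq:W}: for arbitrary quasi-copulas $Q_1,Q_2$, the function $(\langle 0,x_0, Q_1 \rangle, \langle x_0, 1, Q_2 \rangle)^W$ is a quasi-copula. No further work is needed there.

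\emph{Step 2 (computing the sub-track section).} Fix $t\in[0,1-x_0]$ and look at the point $(u,v)=(x_0+\phi^{-1}_{x_0}(t),t)$ on the $x_0$-curve. Since $\phi^{-1}_{x_0}(t)\in[0,1-x_0]$, we have $u\in[x_0,1]$, and $v=t\in[0,1-x_0]$. Hence the point lies in $D_{L,x_0}$, and by \eqref{eq:W}
$$\Gamma_{Q,\phi_{x_0}}(t)=(1-x_0)\,Q_2\Big(\frac{\phi^{-1}_{x_0}(t)}{1-x_0},\frac{t}{1-x_0}\Big).$$
Put $s=\phi^{-1}_{x_0}(t)/(1-x_0)\in[0,1]$. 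Then $\phi(s)=\phi_{x_0}\big(s(1-x_0)\big)/(1-x_0)=t/(1-x_0)$. The right-hand side is therefore $(1-x_0)Q_2\big(s,\phi(s)\big)=(1-x_0)\Gamma_\phi(s)$. By \eqref{eq:G},
$$(1-x_0)\Gamma_\phi(s)=\Gamma_{\phi_{x_0}}\Big(\phi_{x_0}\big(s(1-x_0)\big)\Big)=\Gamma_{\phi_{x_0}}(t).$$
Along the way I would verify that $\phi\in\Phi$, using the rescaling remark in Section 2.2, so that the track-section notion applies to $\phi$. This gives the first claim.

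\emph{Step 3 (existence of $Q_2$).} For the "consequently" part we need some quasi-copula $Q_2$ whose track section along $\phi$ equals $\Gamma_\phi$. The known theory for $x_0=0$ supplies one: for instance the Bertino copula $B_{\Gamma_\phi}$ of \eqref{eq:Bertino copula+in}, or $A_{\Gamma_\phi}$ of \eqref{eq:A copula}, by \cite{De2019On,2022Best}. These results require $\Gamma_\phi$ to be a genuine track-section function along $\phi$, i.e.
$$\max\{t+\phi(t)-1,0\}\le\Gamma_\phi(t)\le\min\{t,\phi(t)\},$$
$\Gamma_\phi$ nondecreasing, and
$$\Gamma_\phi(t_2)-\Gamma_\phi(t_1)\le t_2-t_1+\phi(t_2)-\phi(t_1).$$

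Checking these conditions is the main obstacle and the only place where $\phi_{x_0}(t)\le t$ enters. Writing $r=\phi_{x_0}(t(1-x_0))$, the upper bound in (S1) gives $\Gamma_{\phi_{x_0}}(r)\le\min\{x_0+t(1-x_0),\,r\}$. Because $r\le t(1-x_0)\le x_0+t(1-x_0)$, this minimum equals $r$ (compare Example \ref{ex:1-}). Dividing by $1-x_0$ gives $\Gamma_\phi(t)\le\phi(t)=\min\{t,\phi(t)\}$, where the last equality uses $\phi(t)\le t$.

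The lower bound must read $\Gamma_\phi(t)\ge\max\{t+\phi(t)-1,0\}$. From (S1) we only get $\Gamma_{\phi_{x_0}}(r)\ge\max\{x_0+t(1-x_0)+r-1,0\}$. Dividing by $1-x_0$ yields $\max\{t+\phi(t)-1,0\}$ exactly, because $x_0+t(1-x_0)-1=(1-x_0)(t-1)$. The Lipschitz-type condition follows from (S2) after the same substitution and rescaling, and monotonicity is inherited. If these verifications go through, $\Gamma_\phi$ is an admissible track section, $Q_2$ exists, and combining Steps 1–2 shows $\mathbb{Q}_{\Gamma_{\phi_{x_0}}}\neq\emptyset$.
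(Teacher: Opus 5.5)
Your proposal is correct and follows the same route as the paper. You check that the rescaled $\Gamma_\phi$ meets the track-section conditions: the bounds come from (S1), with $\phi_{x_0}(t)\le t$ giving the upper bound $\min\{t,\phi(t)\}$, and the Lipschitz-type inequality comes from (S2), so the results of \cite{De2019On} supply $Q_2$; you then evaluate the $W$-ordinal sum on $D_{L,x_0}$ at $(x_0+\phi^{-1}_{x_0}(t),t)$ to recover $\Gamma_{\phi_{x_0}}(t)$.
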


\begin{proof}
	It suffices to verify that, for every fixed  $x_0\in(0,1)$, the function $\Gamma_{\phi}$ defined by 	\eqref{eq:G} is indeed a track section, and that $Q\big( x_0+\phi^{-1}_{x_0}(t), t \big)= \Gamma_{\phi_{x_0}}(t)$ for all
	$t\in[0, 1-x_0]$. Since $\Gamma_{\phi_{x_0}}(0)=0$ and	$\Gamma_{\phi_{x_0}}(1-x_0)=1-x_0$, we obtain directly  that
	$\Gamma_{\phi}(0)=0$ and	$\Gamma_{\phi}(1)=1$. From Condition (S1), it also follows that   
	$$
	\Gamma_{\phi}(t)=\frac{	\Gamma_{\phi_{x_0}}\Big(\phi_{x_0}\big(t(1-x_0)\big)\Big)}{1-x_0}\ge \max\{0,\frac{t(1-x_0)+ x_0+\phi_{x_0}\big(t(1-x_0)\big)-1}{1-x_0}\}=\max\{0,t+\phi(t)-1\}
	$$
and 
$$\Gamma_{\phi}(t)=\frac{	\Gamma_{\phi_{x_0}}\Big(\phi_{x_0}\big(t(1-x_0)\big)\Big)}{1-x_0}\le\min\{\frac{\phi_{x_0}\big(t(1-x_0)\big)}{1-x_0},\  t+\frac{x_0}{1-x_0}\}=\min\{\phi(t),\  t+\frac{x_0}{1-x_0}\}\le \phi(t). $$	
From the assumption that $\phi_{x_0}(t)\le t$, $t\in[0, 1-x_0]$, we know that $\phi(t)=\frac{\phi_{x_0}\big(t(1-x_0)\big)}{1-x_0}\le t$, $t\in[0,1]$ such that  $\Gamma_{\phi}(t)\le\min\{t,\phi(t)\}$ for all $t\in[0, 1]$. Moreover, because $\Gamma_{\phi_{x_0}}$ fulfills Condition (S2), it follows immediately that, for all $t_1$, $t_2\in[0, 1]$ with $t_1 \le t_2$,
$$0\le \Gamma_{\phi}(t_2)-\Gamma_{\phi}(t_1)=\frac{\Gamma_{\phi_{x_0}}\Big(\phi_{x_0}\big(t_2(1-x_0)\big)\Big)-\Gamma_{\phi_{x_0}}\Big(\phi_{x_0}\big(t_1(1-x_0)\big)\Big)}{1-x_0}$$
$$\ \ \ \ \ \ \ \ \ \ \ \ \ \ \ \ \ \ \ \ \   =\frac{\phi_{x_0}\big(t_2(1-x_0)\big)-\phi_{x_0}\big(t_1(1-x_0)\big)+(t_2-t_1)(1-x_0)}{1-x_0}=t_2-t_1+\phi(t_2)-\phi(t_1).$$
Hence, from \cite{De2019On}, we know that  $\Gamma_{\phi}$ is a track section along the curve $\phi$. 

Now assume that $t\in[0, 1-x_0]$. Then $\big( x_0+\phi^{-1}_{x_0}(t), t\big)\in D_{L,x_0}$ and 
$$Q\big(x_0+\phi^{-1}_{x_0}(t), t\big)=(1-x_0)Q_2\big(\frac{\phi^{-1}_{x_0}(t)}{1-x_0},\frac{t}{1-x_0}\big)=(1-x_0)Q_2\Big(\frac{\phi^{-1}_{x_0}(t)}{1-x_0}, \frac{\phi_{x_0}\big((\frac{\phi^{-1}_{x_0}(t)}{1-x_0})(1-x_0)\big)}{1-x_0}\Big)$$
$$\ \ \ \ \ \ \ \ \ \ \ \ \ \ \ \ \   =(1-x_0)Q_2\Big(\frac{\phi^{-1}_{x_0}(t)}{1-x_0}, \phi\big(\frac{\phi^{-1}_{x_0}(t)}{1-x_0}\big) \Big)=(1-x_0)\Gamma_{\phi}\big(\frac{\phi^{-1}_{x_0}(t)}{1-x_0}\big)=\Gamma_{\phi_{x_0}}(t),$$
which implies that $Q\in\mathbb{Q}_{\Gamma_{\phi_{x_0}}}$. 
\end{proof}

We now illustrate the results of Theorem \ref{prop:G} through several examples. Specifically, the sub-track sections are taken to be those of the largest
and smallest quasi-copulas,  respectively. The resulting $W$-ordinal sums $Q$ with these sub-track sections are then  presented. 

\begin{example}\label{ex:w}
	Given $x_0\in(0,1)$, consider the $x_0$-curve $\phi_{x_0}(t)=(1-x_0)\big(\frac{t}{1-x_0}\big)^{\beta}$, $t\in[0, 1-x_0]$, $\beta \ge 1$ and the smallest sub-track section  $\Gamma_{W,\phi_{x_0}}(t)=\max\{x_0+t+t^{1/\beta}(1-x_0)^{1-1/\beta}-1,0\}$, $t\in[0,1-x_0]$. Then,  $\Gamma_{\phi}$ of type \eqref{eq:G} is given by $\Gamma_{\phi}(t)=\max\{t+t^{\beta}-1,0\}$, $t\in[0,1]$, which is exactly the track section of the Fr\'{e}chet-Hoeffding lower bound $W$ along the curve $\phi(t)=t^{\beta}$, $t\in[0,1]$. Setting $Q_1=W$ and $Q_2=B_{\Gamma_{\phi}}$ of type  \eqref{eq:Bertino copula+in}, a straightforward calculation yields that, for $(u,v)\in D_{L,x_0}$, 
	\begin{eqnarray*}
	&&Q(u,v)=  (1-x_0)Q_2\big(\frac{u-x_0}{1-x_0},\frac{v}{1-x_0}\big) \\
	&=&	\left\{\begin{array}{ll}
		v-(1-x_0)\min\big\{t^{\beta}-\max\{t+t^{\beta}-1,0\}|t\in[(\frac{v}{1-x_0})^{1/\beta},\frac{u-x_0}{1-x_0}]\big\},& \ \frac{v}{1-x_0}<\big(\frac{u-x_0}{1-x_0}\big)^{\beta},\\
		u-x_0-(1-x_0)\min\big\{t-\max\{t+t^{\beta}-1,0\}|t\in[\frac{u-x_0}{1-x_0},(\frac{v}{1-x_0})^{1/\beta}]\big\},&  \ \frac{v}{1-x_0}\ge\big(\frac{u-x_0}{1-x_0}\big)^{\beta}
	\end{array}\right. \nonumber \\
	&=&\left\{\begin{array}{ll}
		v-(1-x_0)\min\{1-t,t^{\beta}|t\in[(\frac{v}{1-x_0})^{1/\beta},\frac{u-x_0}{1-x_0}] \},& \ \frac{v}{1-x_0}<\big(\frac{u-x_0}{1-x_0}\big)^{\beta},\\
		u-x_0-(1-x_0)\min\{1-t^{\beta}, t|t\in[\frac{u-x_0}{1-x_0}, (\frac{v}{1-x_0})^{1/\beta}] \},&  \ \frac{v}{1-x_0}\ge \big(\frac{u-x_0}{1-x_0}\big)^{\beta} 
	\end{array}\right.\nonumber\\
	&=&\left\{\begin{array}{ll}
	v-\min\{v,1-x_0-(u-x_0) \},& \ \frac{v}{1-x_0}<\big(\frac{u-x_0}{1-x_0}\big)^{\beta},\\
	u-x_0-\min\{u-x_0,1-x_0-v \},&  \ \frac{v}{1-x_0}\ge \big(\frac{u-x_0}{1-x_0}\big)^{\beta}
\end{array}\right.\nonumber\\
	&=&\max\{0,u+v-1\}=W(u,v),
\end{eqnarray*}
	and, for $(u,v)\in D_{U,x_0}$,  $$Q(u,v)=x_0 Q_1\Big(\frac{u}{x_0},\frac{x_0+v-1}{x_0}\Big)=x_0\max\Big\{\frac{u}{x_0}+\frac{x_0+v-1}{x_0}-1,0\Big\}=W(u,v).$$
Hence, the corresponding $W$-ordinal sum $Q$ of type  \eqref{eq:W} is exactly the Fr\'{e}chet-Hoeffding  lower bound  $W$,  and consequently belongs to  $\mathbb{Q}_{\Gamma_{W, \phi_{x_0}}}$. 
\end{example}

\begin{example}\label{ex:m}
	Given $x_0\in(0,1)$, consider the $x_0$-curve $\phi_{x_0}(t)=(1-x_0)\big(\frac{t}{1-x_0}\big)^{\beta}$, $t\in[0, 1-x_0]$, $\beta \ge 1$ and the largest sub-track section  $\Gamma_{M,\phi_{x_0}}(t)=\min\{x_0+\phi^{-1}_{x_0}(t),t\}=t$, $t\in[0,1-x_0]$. Then, $\Gamma_{\phi}$ of type \eqref{eq:G} is given by $\Gamma_{\phi}(t)=\min\{\frac{x_0}{1-x_0}+t,t^{\beta}\}=t^{\beta}=\min\{t,t^{\beta}\}$, $t\in[0,1]$,   which is exactly the track section of the the Fr\'{e}chet-Hoeffding upper bound $M$ along the curve $\phi(t)=t^{\beta}$, $t\in[0,1]$ and $\beta\ge 1$. Choose $Q_1=M$ and $Q_2=A_{\Gamma_{\phi}}$ as defined in  \eqref{eq:A copula}. In this case, for $(u,v)\in D_{U,x_0}$,  $x_0 Q_1\big(\frac{u}{x_0},\frac{x_0+v-1}{x_0}\big)=\min\{u, x_0+v-1\}$, and for $(u,v)\in D_{L,x_0}$,  
	\begin{equation*}(1-x_0) Q_2\big(\frac{u-x_0}{1-x_0},\frac{v}{1-x_0}\big)
		=\left\{\begin{array}{ll}
			\min\Big\{v,u-x_0 -(1-x_0)\max\{t-t^{\beta}&  \\ \ \ \ \ \ \ \ \ \ \ \ \ \ \ \ \ \ \ \ \ \ \ \ \ |t\in[(\frac{v}{1-x_0})^{1/\beta},\frac{u-x_0}{1-x_0}]\}\Big\}, & {\rm if}\  (u,v)\in D_{L,x_0}\ \ {\rm and}\ \ v<\frac{(u-x_0)^{\beta}}{(1-x_0)^{\beta-1}},  \\
			\min\{u-x_0, 	v\},&  {\rm if}\  (u,v)\in D_{L,x_0}\ \ {\rm and}\ \ v\ge \frac{(u-x_0)^{\beta}}{(1-x_0)^{\beta-1}}. 
		\end{array}\right.
	\end{equation*}
Hence, the $W$-ordinal sum $Q$ constructed via  \eqref{eq:W} is expressed as 
		\begin{equation*} Q(u,v)	=\left\{\begin{array}{ll}
			\min\{u, x_0+v-1\} & {\rm if}\ (u,v)\in D_{U,x_0},\\		
		\min\Big\{v,u-x_0 -(1-x_0)\max\{t-t^{\beta}|t\in[(\frac{v}{1-x_0})^{1/\beta},\frac{u-x_0}{1-x_0}]\}\Big\}, & {\rm if}\  (u,v)\in D_{L,x_0}\ \ {\rm and}\ \ v<\frac{(u-x_0)^{\beta}}{(1-x_0)^{\beta-1}},  \\
			\min\{u-x_0, 	v\},&  {\rm if}\  (u,v)\in D_{L,x_0}\ \ {\rm and}\ \ v\ge \frac{(u-x_0)^{\beta}}{(1-x_0)^{\beta-1}},\\
			W(u,v), &  {\rm otherwise},
		\end{array}\right.
	\end{equation*}
which is not equal to the Fr\'{e}chet-Hoeffding upper bound  $M$.
\end{example}

Based on the result of Theorem \ref{prop:G}, we show that the class  $\mathbb{C}_{\Gamma_{\phi_{x_0}}}$ is also nonempty and  present a characterization of the sub-track section  of a copula  along an $x_0$-curve $\phi_{x_0}\in\Phi_{x_0}$.

\begin{prop}
\label{cor:1}	
	Let $\phi_{x_0}$ be an $x_0$-curve in $\Phi_{x_0}$  with $\phi_{x_0}(t)\le t$, $t\in [0,1-x_0]$.

(i) For every $\phi_{x_0}$-function  $\Gamma_{\phi_{x_0}}$,  there exists a copula $C$ 
belonging to $\mathbb{C}_{\Gamma_{\phi_{x_0}}}$.

(ii) A function $g:[0,1-x_0]\rightarrow [0,1-x_0]$ is a sub-track section $\Gamma_{C, \phi_{x_0}}$ of a copula $C$ along the $x_0$-curve $\phi_{x_0}$ if and only if $g$ is a $\phi_{x_0}$-function. 
\end{prop}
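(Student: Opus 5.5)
The plan is to reuse the construction of Theorem \ref{prop:G}, now with both blocks of the $W$-ordinal sum chosen to be copulas, and then to use that construction for part (ii).

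For (i), fix a $\phi_{x_0}$-function $\Gamma_{\phi_{x_0}}$ and set $\phi(t)=\phi_{x_0}\big(t(1-x_0)\big)/(1-x_0)$, $t\in[0,1]$. Define $\Gamma_\phi$ by \eqref{eq:G}. The proof of Theorem \ref{prop:G} already shows that $\Gamma_\phi$ is a track section along $\phi$; the hypothesis $\phi_{x_0}(t)\le t$ is what gives $\Gamma_\phi\le\min\{t,\phi(t)\}$, and (S2) transfers to the Lipschitz-type condition for $\Gamma_\phi$.

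Next I need a copula $Q_2$ whose track section along $\phi$ is exactly $\Gamma_\phi$. The natural candidate is the Bertino copula $B_{\Gamma_\phi}$ of \eqref{eq:Bertino copula+in}. By \citet{De2019On}, for every track section $\Gamma_\phi$ this function is a copula with track section $\Gamma_\phi$; indeed it is the best-possible lower bound of the nonempty class of copulas with that section. Alternatively, one can invoke the existence result of \citet{Fernandez2017Copulas}, whose conditions are exactly the track-section conditions just verified. For the other block I take $Q_1=M$, or any copula.

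Then $C=(\langle 0,x_0,Q_1\rangle,\langle x_0,1,Q_2\rangle)^W$ is a copula, since both blocks are copulas \citep{2004W}. The computation at the end of the proof of Theorem \ref{prop:G} then gives $C\big(x_0+\phi^{-1}_{x_0}(t),t\big)=\Gamma_{\phi_{x_0}}(t)$, so $C\in\mathbb{C}_{\Gamma_{\phi_{x_0}}}$.

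For (ii), necessity comes from the remark after Definition \ref{sub-track1}. Every copula is a quasi-copula, and $W\le C\le M$ gives (S1). Monotonicity in each argument together with the Lipschitz inequality \eqref{Lp}, applied to the points $\big(x_0+\phi^{-1}_{x_0}(t_i),t_i\big)$, gives (S2). Sufficiency is exactly part (i).

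The main obstacle is the claim that the track section $\Gamma_\phi$ obtained from \eqref{eq:G} is realized by a genuine copula and not merely by a quasi-copula. I will rely on the cited fact that $B_{\Gamma_\phi}$ is a copula for every function satisfying the track-section conditions (with $\phi$ continuous, strictly increasing, $\phi(0)=0$, $\phi(1)=1$). If that citation requires additional hypotheses, I would check directly that $B_{\Gamma_\phi}$ is 2-increasing. To do so, I would use the fact that $t-\Gamma_\phi(t)$ and $\phi(t)-\Gamma_\phi(t)$ have the bounded-variation behaviour implied by (S2), and analyse rectangles in the two regions $v<\phi(u)$ and $v\ge\phi(u)$ separately.
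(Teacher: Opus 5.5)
Your proposal is correct and follows the paper's proof: a $W$-ordinal sum $(\langle 0,x_0,C_1\rangle,\langle x_0,1,C_2\rangle)^W$ with copula blocks, where $C_2$ realizes the rescaled track section \eqref{eq:G}, with necessity in (ii) from $W\le C\le M$ plus monotonicity and the Lipschitz condition, and sufficiency from (i). The only difference is that you name $C_2=B_{\Gamma_\phi}$ explicitly, whereas the paper just takes ``a copula with track section $\Gamma_\phi$'' and implicitly relies on the same existence result from \citet{De2019On}.
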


\begin{proof} ($i$) Let $C_1$ be an arbitrary copula and $C_2$ be  a copula with the track section $\Gamma_{\phi}$ given in \eqref{eq:G}. Consider the $W$-ordinal sum $C=(\langle 0,x_0, C_1 \rangle, \langle x_0, 1, C_2 \rangle)^W$ defined by \eqref{eq:W}. By an argument analogous to that used in the proof of Theorem  \ref{prop:G}, it follows that $C$ is a copula and $\Gamma_{C, \phi_{x_0}}=\Gamma_{\phi_{x_0}}$, that is, $C\in \mathbb{C}_{\Gamma_{\phi_{x_0}}}$.

($ii$)	If $g$ is the sub-track section $\Gamma_{C, \phi_{x_0}}$ of a copula $C$ along the $x_0$-curve $\phi_{x_0}$, then $g$ is obviously a $\phi_{x_0}$-function. 
		Conversely, if $g$ is a $\phi_{x_0}$-function, it follows from the statement ($i$)  that the $W$-ordinal sum $C=(\langle 0,x_0, C_1 \rangle, \langle x_0, 1, C_2 \rangle)^W$ defined by \eqref{eq:W} is a copula satisfying $\Gamma_{C,\phi_{x_0}}=g$, where $C_1$ is an arbitrary copula and 
	$C_2$ is a copula with the given track section $\Gamma_{\phi}(t)=g\Big(\phi_{x_0}\big(t(1-x_0)\big)\Big)/(1-x_0)$, $t\in[0,1]$.
\end{proof}

\begin{example}\label{ex2}
	($i$) The $\phi_{x_0}$-function  $g(t)=t\big(x_0+(1-x_0)(\frac{t}{1-x_0})^{1/\beta}\big)^{\alpha}$, $0\le \alpha\le 1$, $t\in[0, 1-x_0]$, $\beta \ge 1$ presented in Example \ref{ex:1}, is the sub-track section of the Cuadras-Aug\'{e} family of copulas \citep{1981A} $C(u,v)=\min\{u,v\} (\max\{u,v\})^{\alpha}$, $(u,v)\in [0,1]^2$  along the $x_0$-curve $\phi_{x_0}(t)=(1-x_0)\big(\frac{t}{1-x_0}\big)^{\beta}$. In particular, when  $\alpha=0$, the Cuadras-Aug\'{e} copula reduces to the Fr\'{e}chet-Hoeffding upper bound $M(u,v)=\min\{u,v\}$, and its sub-track section is $g(t)=t$. When  $\alpha=1$, the Cuadras-Aug\'{e} copula reduces to the product copula $\Pi(u,v)=uv$, and its sub-track section along the $x_0$-curve $\phi_{x_0}(t)=(1-x_0)\big(\frac{t}{1-x_0}\big)^{\beta}$ is $g(t)=t\big(x_0+(1-x_0)(\frac{t}{1-x_0})^{1/\beta}\big)$.

\noindent ($ii$) The $\phi_{x_0}$-function  $g(t)=\frac{t\big((2-x_0)t+x_0(1-x_0)\big)}{t+(1-x_0)\big(1-\theta(1-x_0-t)(1-t)\big)}$, $\theta\in[-1,1]$, $t\in[0, 1-x_0]$ presented in Example \ref{ex:1+}  is the sub-track section of the Ali-Mikhail-Haq family of copulas \citep{Nelsen(2006)} $C(u,v)=uv/(1-\theta(1-u)(1-v))$, $\theta\in[-1,1]$, $(u,v)\in [0,1]^2$  along the $x_0$-curve $\phi_{x_0}(t)=\frac{(1-x_0)t}{2(1-x_0)-t}$. 
\end{example}

\section{Families of quasi-copulas and copulas with a prescribed  sub-track section}

In this section,  based on the $W$-ordinal sum construction established in Theorem \ref{prop:G} and Proposition \ref{cor:1},	 and employing the (quasi-)copulas $B_{\Gamma_{\phi}}$, $K_{\Gamma_{\phi}}$ and $A_{\Gamma_{\phi}}$ presented  in \eqref{eq:Bertino copula+in}-\eqref{eq:K copula}, we construct several families of quasi-copulas and copulas with a prescribed  sub-track section $\Gamma_{\phi_{x_0}}$. 

It should be noted that the  $W$-ordinal sum $Q=(\langle 0,x_0, Q_1 \rangle, \langle x_0, 1, Q_2 \rangle)^W$ defined by \eqref{eq:W} does not necessarily yield
symmetric quasi-copulas. Hence, we further construct several families of symmetric quasi-copulas with the same sub-track section. 

As a preliminary step, we recall a symmetrization procedure that obtains a symmetric quasi-copula from a non-symmetric (quasi-)copula. 

\begin{lem}\label{lem:sym}\citep{2008quasi-copulas} Given any (quasi-)copula $Q$, the function $Q^*:$ $[0, 1]^2$ $\rightarrow[0, 1]$ defined by $Q^*(u,v):=Q(\max\{u,v\},$ 
	$ \min\{u,v\})$ is a symmetric quasi-copula.
\end{lem}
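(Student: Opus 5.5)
We need to show that $Q^*(u,v)=Q(\max\{u,v\},\min\{u,v\})$ is a symmetric quasi-copula. Symmetry is immediate, since the right-hand side depends on $(u,v)$ only through the unordered pair $\{u,v\}$. So the work is to verify (Q1)--(Q3) of Definition \ref{df:qcopula}. The plan is to exploit the fact that $Q^*$ coincides with $Q$ on the region $\Delta_1=\{v\le u\}$ and with the reflected function $Q^T(u,v):=Q(v,u)$ on $\Delta_2=\{u\le v\}$. Note that $Q^T$ is again a quasi-copula, because (Q1)--(Q3) are invariant under swapping the arguments. Both pieces agree on the diagonal, where each equals $Q(t,t)$, so $Q^*$ is well defined and continuous.

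First, the boundary conditions. For $t\in[0,1]$ we have
\[
Q^*(t,0)=Q^*(0,t)=Q(t,0)=0
\quad\text{and}\quad
Q^*(1,t)=Q^*(t,1)=Q(1,t)=t,
\]
using (Q1) for $Q$.

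Second, monotonicity. Fix $v$ and consider $u\mapsto Q^*(u,v)$.
\begin{itemize}
\item For $u\le v$ this equals $Q(v,u)$, which is nondecreasing in $u$ because $Q$ is nondecreasing in its second argument.
\item For $u\ge v$ it equals $Q(u,v)$, which is nondecreasing in $u$ because $Q$ is nondecreasing in its first argument.
\end{itemize}
The two branches agree at $u=v$, so the map is nondecreasing on all of $[0,1]$. Monotonicity in $v$ follows by symmetry of $Q^*$.

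Third, the Lipschitz condition (Q3), which is the only step needing care, since the two points may lie on opposite sides of the diagonal. By the triangle inequality it suffices to check the one-variable Lipschitz property $|Q^*(u_1,v)-Q^*(u_2,v)|\le|u_1-u_2|$ with $v$ fixed, together with its analogue in $v$, which follows by symmetry. Take $u_1\le u_2$ and fix $v$.
\begin{itemize}
\item If $u_1$ and $u_2$ lie on the same side of $v$, the bound follows directly from (Q3) applied to $Q$ or to $Q^T$.
\item If $u_1\le v\le u_2$, insert the diagonal point $(v,v)$. This gives
\[
|Q^*(u_2,v)-Q^*(u_1,v)|\le |Q(u_2,v)-Q(v,v)|+|Q(v,v)-Q(v,u_1)|\le (u_2-v)+(v-u_1)=u_2-u_1 .
\]
\end{itemize}
The general estimate $|Q^*(u_1,v_1)-Q^*(u_2,v_2)|\le|u_1-u_2|+|v_1-v_2|$ then follows by passing through the intermediate point $(u_2,v_1)$.

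This completes the verification that $Q^*$ is a symmetric quasi-copula. No 2-increasing property is claimed, so the argument is needed only at the quasi-copula level, even when $Q$ is a copula.
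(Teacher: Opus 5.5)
The paper does not prove this lemma; it imports it from \citet{2008quasi-copulas}, so there is no in-paper argument to compare your proof against.

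Your direct verification of (Q1)--(Q3) is correct and complete. The boundary conditions and monotonicity are immediate. The one delicate step is the Lipschitz estimate when $u_1\le v\le u_2$ straddle the diagonal, and you handle it correctly by inserting $(v,v)$. Your passage from one-variable to two-variable Lipschitz bounds via $(u_2,v_1)$ is also valid, with the $v$-direction taken from the symmetry of $Q^*$.

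Your closing remark matches how the paper uses the lemma. Its examples show that $B^*_{\Gamma_{\phi_{x_0}}}$, $K^*_{\Gamma_{\phi_{x_0}}}$ and $G^*_{\Gamma_{\phi_{x_0}}}$ are in general proper quasi-copulas, even when the underlying function is a copula.
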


Using the copula $B_{\Gamma_{\phi}}$ of type  \eqref{eq:Bertino copula+in}, we construct the copula $B_{\Gamma_{\phi_{x_0}}}$ with the prescribed sub-track section $\Gamma_{\phi_{x_0}}$, as well as the symmetric quasi-copula $B^*_{\Gamma_{\phi_{x_0}}}$ sharing the same  sub-track section $\Gamma_{\phi_{x_0}}$.

\begin{prop}\label{prop:Bertino copula*}
For a fixed $x_0\in(0,1)$, let  $\Gamma_{\phi_{x_0}}$ be a $\phi_{x_0}$-function, where   $\phi_{x_0}\in\Phi_{x_0}$ satisfies  $\phi_{x_0}(t)\le t$ for all $t\in[0,1-x_0]$. Then, the function  $B_{\Gamma_{\phi_{x_0}}}:$ $[0, 1]^2\rightarrow$ $[0, 1]$ defined by 
\begin{equation}\label{eq:Bx}
	B_{\Gamma_{\phi_{x_0}}}(u,v)
	=\left\{\begin{array}{ll}
		v-\min\{\hat\Gamma^{II}_{\phi_{x_0}}(t)|t\in[v, \phi_{x_0}(u-x_0)]\}, & \ {\rm if}\  (u,v)\in D_{L, x_0}\cap T_{L, \phi_{x_0}},\\
		u-x_0-\min\{\hat\Gamma^{I}_{\phi_{x_0}}(t)|t\in[\phi_{x_0}(u-x_0),v] \},&  \ {\rm if}\  (u,v)\in D_{L, x_0}\cap T_{U, \phi_{x_0}}, \\
		W(u,v),&  \ {\rm otherwise}, 
	\end{array}\right.
\end{equation}
is a copula belonging to  $\mathbb{C}_{\Gamma_{\phi_{x_0}}}$. In particular, the function $B^*_{\Gamma_{\phi_{x_0}}}:$ $[0, 1]^2\rightarrow$ $[0, 1]$ defined by
$$ B^*_{\Gamma_{\phi_{x_0}}}(u,v)\ \ \ \ \ \ \ \ \ \ \ \ \ \ \ \ \ \ \ \ \ \ \ \ \ \ \ \ \ \ \ \ \ \ \ \ \ \ \ \ \ \ \ \ \ \ \ \  \ \ \ \ \ \ \ \ \ \ \ \ \ \ \ \ \ \ \ \ \ \ \ \ \ \ \ \ \ \ \ \ \ \ \ \ \ \ \ \ \ \ \ \ \ \ \ \   \ \ \ \ \ \ \ \ \ \ \ \ \ \ \ \ \ \ \ \ \ \ \ \ \ \ \ \ \ \ \ \ \ \ \ \ \ \ \ \ \ \ \ \ \ \  \ \ \ \ \ \  $$
	\begin{equation}\label{eq:Bertino copula*}
=\left\{\begin{array}{ll}
		\min\{u,v\}-\min\big\{\hat\Gamma^{II}_{\phi_{x_0}}(t)|t\in[\min\{u,v\}, \phi_{x_0}(\max\{u,v\}-x_0)] \big\}, & \ {\rm if}\  (u,v)\in [0,1]^2 \backslash S_{x_0}\ \ {\rm and}\\  &  \min\{u,v\}<\phi_{x_0}\big(\max\{u,v\}-x_0\big),\\
		\max\{u,v\}-x_0-\min\big\{\hat\Gamma^{I}_{\phi_{x_0}}(t)|t\in[ \phi_{x_0}(\max\{u,v\}-x_0), \min\{u,v\}] \big\},&  \ {\rm if}\  (u,v)\in [0,1]^2 \backslash S_{x_0}\ \ {\rm and}\\
		& \min\{u,v\}\ge \phi_{x_0}\big(\max\{u,v\}-x_0\big), \\
		W(u,v),&  \ {\rm otherwise}, 
	\end{array}\right.
\end{equation}	
is a symmetric quasi-copula belonging to  $\mathbb{Q}_{\Gamma_{\phi_{x_0}}}$.
\end{prop}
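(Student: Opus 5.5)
The plan is to realize $B_{\Gamma_{\phi_{x_0}}}$ as the $W$-ordinal sum $(\langle 0,x_0, W \rangle, \langle x_0, 1, B_{\Gamma_{\phi}} \rangle)^W$. Here $\phi(t)=\phi_{x_0}\big(t(1-x_0)\big)/(1-x_0)$, $\Gamma_\phi$ is given by \eqref{eq:G}, and $B_{\Gamma_\phi}$ is the Bertino copula \eqref{eq:Bertino copula+in}. The argument then follows Proposition \ref{cor:1}(i) and Theorem \ref{prop:G}.

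By the proof of Theorem \ref{prop:G}, $\Gamma_\phi$ is a track section along $\phi$. Hence $B_{\Gamma_\phi}$ is a copula with track section $\Gamma_\phi$ \citep{De2019On}, and Proposition \ref{cor:1}(i) yields that the $W$-ordinal sum $C$ is a copula in $\mathbb{C}_{\Gamma_{\phi_{x_0}}}$. On $D_{U,x_0}$ we have $x_0W\big(\frac{u}{x_0},\frac{x_0+v-1}{x_0}\big)=W(u,v)$, as computed in Example \ref{ex:w}. This agrees with the ``otherwise'' branch of \eqref{eq:Bx}, as does the region off $D_{U,x_0}\cup D_{L,x_0}$.

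The only real computation is on $D_{L,x_0}$. Set $a=\frac{u-x_0}{1-x_0}$ and $b=\frac{v}{1-x_0}$. Then $b<\phi(a)$ if and only if $v<\phi_{x_0}(u-x_0)$, i.e.\ $(u,v)\in T_{L,\phi_{x_0}}$. In that case
\[(1-x_0)B_{\Gamma_\phi}(a,b)=v-(1-x_0)\min\{\phi(s)-\Gamma_\phi(s)\,|\,s\in[\phi^{-1}(b),a]\}.\]
Substituting $t=\phi_{x_0}\big(s(1-x_0)\big)$ turns $s\in[\phi^{-1}(b),a]$ into $t\in[v,\phi_{x_0}(u-x_0)]$, and turns $(1-x_0)\big(\phi(s)-\Gamma_\phi(s)\big)$ into $t-\Gamma_{\phi_{x_0}}(t)=\hat\Gamma^{II}_{\phi_{x_0}}(t)$. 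This gives the first branch of \eqref{eq:Bx}. Similarly, when $b\ge\phi(a)$ the same substitution sends $(1-x_0)\big(s-\Gamma_\phi(s)\big)$ to $\phi^{-1}_{x_0}(t)-\Gamma_{\phi_{x_0}}(t)=\hat\Gamma^{I}_{\phi_{x_0}}(t)$, with $t$ ranging over $[\phi_{x_0}(u-x_0),v]$. The leading term becomes $(1-x_0)a=u-x_0$, which is the second branch.

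One point needs care: at the corner point $(x_0,1-x_0)$, and along the boundary lines where $D_{L,x_0}$ meets the rest of the square, the two descriptions must agree. This is automatic, because the $W$-ordinal sum is well defined and continuous there.

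For the symmetric part, apply Lemma \ref{lem:sym} to $B_{\Gamma_{\phi_{x_0}}}$. This shows that $B^*(u,v)=B_{\Gamma_{\phi_{x_0}}}(\max\{u,v\},\min\{u,v\})$ is a symmetric quasi-copula. Rewriting it gives \eqref{eq:Bertino copula*}. The point $(\max,\min)$ lies in $D_{L,x_0}$ exactly when $\max\{u,v\}\ge x_0$ and $\min\{u,v\}\le 1-x_0$, i.e.\ when $(u,v)\notin S_{x_0}$ (up to boundary points, where both formulas agree with $W$). Otherwise we get $W(\max,\min)=W(u,v)$.

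To check membership in $\mathbb{Q}_{\Gamma_{\phi_{x_0}}}$, note that the sub-track points satisfy $t\le x_0+\phi^{-1}_{x_0}(t)$. Indeed, $\phi_{x_0}(s)\le s$ gives $\phi^{-1}_{x_0}(t)\ge t$. Hence $\max=x_0+\phi^{-1}_{x_0}(t)$ and $\min=t$, so $B^*$ coincides with $B_{\Gamma_{\phi_{x_0}}}$ there and has sub-track section $\Gamma_{\phi_{x_0}}$.

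The main obstacle is the bookkeeping in the change of variables on $D_{L,x_0}$, together with the case split into $T_{L}$ and $T_U$. Making the interval endpoints match under the monotone substitution is the step I expect to require the most attention.
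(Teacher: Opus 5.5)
Your proposal is correct and follows the paper's own proof. You identify $B_{\Gamma_{\phi_{x_0}}}$ with the $W$-ordinal sum $(\langle 0,x_0,W\rangle,\langle x_0,1,B_{\Gamma_\phi}\rangle)^W$ via the substitution $t=\phi_{x_0}(s(1-x_0))$ on $D_{L,x_0}$, then symmetrize with Lemma \ref{lem:sym} and use $\phi^{-1}_{x_0}(t)\ge t$ to keep the sub-track section; your remark that $(\max\{u,v\},\min\{u,v\})\in D_{L,x_0}$ matches $(u,v)\notin S_{x_0}$ only up to boundary points, where both formulas reduce to $W$, is slightly more careful than the paper's statement of that equivalence.
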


\begin{proof}
	We shall show that $B_{\Gamma_{\phi_{x_0}}}$ is a $W$-ordinal sum 
	of the form $\big(\langle 0,x_0, W \rangle, \langle x_0, 1, B_{\Gamma_{\phi}} \rangle\big)^W$ 
	with $\Gamma_{\phi}$ defined by \eqref{eq:G}, and that  $B^*_{\Gamma_{\phi_{x_0}}}$ 
	is its symmetrization. Recall that the function $B_{\Gamma_{\phi}}$ of type  \eqref{eq:Bertino copula+in} is a copula with the track section $\Gamma_{\phi}$ along the curve $\phi$, and the $W$-ordinal sum $\big(\langle 0,x_0, W \rangle, \langle x_0, 1, B_{\Gamma_{\phi}} \rangle\big)^W$ is given by $$\big(\langle 0,x_0, W(u,v) \rangle, \langle x_0, 1, B_{\Gamma_{\phi}}(u,v) \rangle\big)^W
	=\left\{\begin{array}{ll}
(1-x_0)B_{\Gamma_{\phi}}\big(\frac{u-x_0}{1-x_0},\frac{v}{1-x_0}\big),& \ {\rm if}\  (u,v)\in D_{L,x_0},\\
W(u,v),&  \ {\rm otherwise}. 
	\end{array}\right.$$
By straightforward calculations, we obtain that for all $(u, v)\in D_{L,x_0}$, 
	\begin{eqnarray*}
	&& (1-x_0)B_{\Gamma_{\phi}}\big(\frac{u-x_0}{1-x_0},\frac{v}{1-x_0}\big) \\
	&=&	\left\{\begin{array}{ll}
		v-(1-x_0)\min\{\phi(t)-\Gamma_{\phi}(t)|t\in[\phi^{-1}(\frac{v}{1-x_0}),\frac{u-x_0}{1-x_0}]\},& \ \frac{v}{1-x_0}<\phi\big(\frac{u-x_0}{1-x_0}\big),\\
		u-x_0-(1-x_0)\min\{t-\Gamma_{\phi}(t)|t\in[\frac{u-x_0}{1-x_0},\phi^{-1}(\frac{v}{1-x_0})]\},&  \ \frac{v}{1-x_0}\ge\phi\big(\frac{u-x_0}{1-x_0}\big)
	\end{array}\right. \nonumber \\
	&=&\left\{\begin{array}{ll}
		v-\min\{\phi_{x_0}\big(t(1-x_0)\big)-\Gamma_{\phi_{x_0}}\big(\phi_{x_0}(t(1-x_0))\big)|t\in[\phi^{-1}(\frac{v}{1-x_0}),\frac{u-x_0}{1-x_0}] \},& \ v<\phi_{x_0}\big(u-x_0\big),\\
		u-x_0-\min\{t(1-x_0)-\Gamma_{\phi_{x_0}}\big(\phi_{x_0}(t(1-x_0))\big)|t\in[\frac{u-x_0}{1-x_0},\phi^{-1}(\frac{v}{1-x_0})] \},&  \ v\ge\phi_{x_0}\big(u-x_0\big)
	\end{array}\right.\nonumber\\
	&=&\left\{\begin{array}{ll}
		v-\min\{\phi_{x_0}(t)-\Gamma_{\phi_{x_0}}\big(\phi_{x_0}(t)\big)|t\in[\phi_{x_0}^{-1}(v), u-x_0]\},& \ v<\phi_{x_0}\big(u-x_0\big),\\
		u-x_0-\min\{t-\Gamma_{\phi_{x_0}}\big(\phi_{x_0}(t)\big)|t\in[u-x_0,\phi_{x_0}^{-1}(v)]\},&  \ v\ge\phi_{x_0}\big(u-x_0\big)
	\end{array}\right.\nonumber\\
	&=&\left\{\begin{array}{ll}
		v-\min\{\hat\Gamma^{II}_{\phi_{x_0}}(t)|t\in[v, \phi_{x_0}(u-x_0)]\},& \ v<\phi_{x_0}\big(u-x_0\big),\\
		u-x_0-\min\{\hat\Gamma^{I}_{\phi_{x_0}}(t)|t\in[\phi_{x_0}(u-x_0),v]\},&  \ v\ge\phi_{x_0}\big(u-x_0\big), 
	\end{array}\right.
\end{eqnarray*}
where the second equality follows from the expressions of  $\Gamma_{\phi}$ and $\phi$ given in 	\eqref{eq:G}. 
Therefore, the function $B_{\Gamma_{\phi_{x_0}}}$ defined by \eqref{eq:Bx} is equal to $W$-ordinal sum $\big(\langle 0,x_0, W \rangle, \langle x_0, 1, B_{\Gamma_{\phi}} \rangle\big)^W$. Moreover, since both $B_{\Gamma_{\phi}}$
and $W$ are copulas, then  $B_{\Gamma_{\phi_{x_0}}}=\big(\langle 0,x_0, W \rangle, \langle x_0, 1, B_{\Gamma_{\phi}} \rangle\big)^W$ 
is also a copula. By Theorem \ref{prop:G}, its sub-track section coincides with $\Gamma_{\phi_{x_0}}$. Therefore, $B_{\Gamma_{\phi_{x_0}}}\in \mathbb{C}_{\Gamma_{\phi_{x_0}}}$

The symmetrization of $B_{\Gamma_{\phi}}$ 
is defined, for every $(u,v)\in[0,1]^2$, as  $B^*_{\Gamma_{\phi_{x_0}}}(u,v)=B_{\Gamma_{\phi_{x_0}}}(\max\{u,v\},\min\{u,v\})$. It can be verified easily  that  $(\max\{u,v\},\min\{u,v\})\in D_{L,x_0}$ holds if and only if $(u,v)\in [0,1]^2 \backslash S_{x_0}$. Applying Lemma \ref{lem:sym}, we know that $B^*_{\Gamma_{\phi_{x_0}}}$ is a symmetric quasi-copula. Additionally, 
$$B^*_{\Gamma_{\phi_{x_0}}}\big(x_0+\phi^{-1}_{x_0}(t),t\big)=B_{\Gamma_{\phi_{x_0}}}\big(\max\{x_0+\phi^{-1}_{x_0}(t),t\},\min\{x_0+\phi^{-1}_{x_0}(t),t\}\big)$$
$$\ \ \ \ \ \ \ \ \ \ \ \ \ \ \ \ \ \ \ \ \ \ \ \ \ \ \ \ \ \ \  =B_{\Gamma_{\phi_{x_0}}}\big(x_0+\phi^{-1}_{x_0}(t),t\big)=\Gamma_{\phi_{x_0}}(t),\ \ t\in[0,1-x_0],$$
where the second equality follows from the assumption that $\phi_{x_0}(t)\le t$. Consequently,  $B^*_{\Gamma_{\phi_{x_0}}}$
is a symmetric quasi-copula and its sub-track  section along the $x_0$-curve $\phi_{x_0}$ is exactly  $\Gamma_{\phi_{x_0}}$, i.e., $B^*_{\Gamma_{\phi_{x_0}}}\in \mathbb{Q}_{\Gamma_{\phi_{x_0}}}$. 
\end{proof}

We now show with an example that  $B^*_{\Gamma_{\phi_{x_0}}}$ is generally a proper quasi-copula. Specifically, we show that it is possible to obtain  $V_{B^*_{\Gamma_{\phi_{x_0}}}}([u,v]\times[u,v])<0$, for some $(u,v)\in[0,1]^2$ with $u<v$. 

\begin{example} 	For a fixed  $0<x_0\le \frac{1-\beta^{1/(1-\beta)}}{3-\beta^{1/(1-\beta)}}$, $\beta > 1$, let $\Gamma_{\phi_{x_0}}(t)=t$ be the sub-track section of the Fr\'{e}chet-Hoeffding upper bound $M$  along the $x_0$-curve $\phi_{x_0}(t)=(1-x_0)\big(\frac{t}{1-x_0}\big)^{\beta}$, $t\in[0, 1-x_0]$. For the value of $x_0$ under consideration, choose $u$ and $v$ such that  $x_0+(1-x_0)\beta^{1/(1-\beta)}\le  u<v\le u+x_0$. 
 	In this case, for all $t\in[ \phi_{x_0}(u-x_0), v]$, 
	$$\big(\phi^{-1}_{x_0}(t)-\Gamma_{ \phi_{x_0}}(t)\big)'=\frac{1}{\beta}t^{1/\beta-1}(1-x_0)^{1-1/\beta}-1\le 0.$$
Then, $\phi^{-1}_{x_0}(t)-\Gamma_{ \phi_{x_0}}(t)$ is nonincreasing on $[ \phi_{x_0}(u-x_0), v]$. Thus, 
	\begin{eqnarray*}
	&& V_{B^*_{\Gamma_{\phi_{x_0}}}}([u,v]\times[u,v])\nonumber\\
	&=& B^*_{\Gamma_{\phi_{x_0}}}(u,u)+ B^*_{\Gamma_{\phi_{x_0}}}(v,v)-B^*_{\Gamma_{\phi_{x_0}}}(u,v)-B^*_{\Gamma_{\phi_{x_0}}}(v,u)\nonumber \\
	&=&u-x_0-\min\big\{\phi^{-1}_{x_0}(t)-\Gamma_{\phi_{x_0}}(t)|t\in[ \phi_{x_0}(u-x_0), u] \big\}+v-x_0-\min\big\{\phi^{-1}_{x_0}(t)-\Gamma_{\phi_{x_0}}(t)|t\in[ \phi_{x_0}(v-x_0), v] \big\} \nonumber \\
	&&-2v+2x_0+2\min\big\{\phi^{-1}_{x_0}(t)-\Gamma_{\phi_{x_0}}(t)|t\in[ \phi_{x_0}(v-x_0), u] \big\}\nonumber\\
	&=&u-v+\big(u^{1/\beta}(1-x_0)^{1-1/\beta}-u\big)-\big(v^{1/\beta}(1-x_0)^{1-1/\beta}-v\big)\nonumber\\
	&=&(u^{1/\beta}-v^{1/\beta})(1-x_0)^{1-1/\beta}<0,
\end{eqnarray*}	
where the third equality follows from the nonincreasing property of $\phi^{-1}_{x_0}(t)-\Gamma_{ \phi_{x_0}}(t)$  on $[ \phi_{x_0}(u-x_0), v]$, and the inequality holds due to $u<v$.  
\end{example}

Using the (quasi-)copulas  $A_{\Gamma_{\phi}}$ and $K_{\Gamma_{\phi}}$ presented in \eqref{eq:A copula} and \eqref{eq:K copula} respectively, 
an argument analogous to that used in Proposition \ref{prop:Bertino copula*} yields the following two families of 
(quasi-)copulas whose sub-track section along the $x_0$-curve $\phi_{x_0}$ coincides with  $\Gamma_{\phi_{x_0}}$. 

\begin{prop}\label{prop:Kcopula*}
For a fixed $x_0\in(0,1)$, let  $\Gamma_{\phi_{x_0}}$ be a $\phi_{x_0}$-function, where   $\phi_{x_0}\in\Phi_{x_0}$ satisfies  $\phi_{x_0}(t)\le t$ for all $t\in[0,1-x_0]$.
	
	\noindent ($i$) If\ \   $2t-\Gamma_{\phi_{x_0}}\Big(\phi_{x_0}\big(t(1-x_0)\big)\Big)/(1-x_0)$ and $2t-\Gamma_{\phi_{x_0}}\big(t(1-x_0)\big)/(1-x_0)$ are nondecreasing
	on $[0, 1-x_0]$, then the function  $K_{\Gamma_{\phi_{x_0}}}:$ $[0, 1]^2\rightarrow$ $[0, 1]$ defined by 
	\begin{equation}\label{eq:K copula0}K_{\Gamma_{\phi_{x_0}}}(u,v)
		=\left\{\begin{array}{ll}
			\min\big\{u-x_0,v,\frac{\Gamma_{\phi_{x_0}}\big(\phi_{x_0}(u-x_0)\big)+\Gamma_{\phi_{x_0}}(v)}{2}\big\}, & \ {\rm if}\  (u,v)\in D_{L, x_0}, \\ 
			W(u,v),&  \ {\rm otherwise} 
		\end{array}\right.
	\end{equation}
is a copula belonging to $\mathbb{C}_{\Gamma_{\phi_{x_0}}}$, and its symmetrization  given by 
	\begin{equation}\label{eq:K copula*}K^*_{\Gamma_{\phi_{x_0}}}(u,v)
		=\left\{\begin{array}{ll}
			\min\big\{\max\{u,v\}-x_0,\min\{u,v\},\frac{\Gamma_{\phi_{x_0}}\big(\phi_{x_0}(\max\{u,v\}-x_0)\big)+\Gamma_{\phi_{x_0}}(\min\{u,v\})}{2}\big\}, & \ {\rm if}\  (u,v)\in [0,1]^2 \backslash S_{x_0} \\ 
			W(u,v),&  \ {\rm otherwise} 
		\end{array}\right.
	\end{equation}
is a symmetric quasi-copula belonging to $\mathbb{Q}_{\Gamma_{\phi_{x_0}}}$.
	
\noindent ($ii$) The function  $G_{\Gamma_{\phi_{x_0}}}:$ $[0, 1]^2\rightarrow$ $[0, 1]$ defined by \begin{equation}\label{eq:A copula0}G_{\Gamma_{\phi_{x_0}}}(u,v)
	=\left\{\begin{array}{ll}
		\min\Big\{v,u-x_0-\max\{\hat\Gamma^{I}_{\phi_{x_0}}(t)|t\in[v,\phi_{x_0}(u-x_0)]\}\Big\}, & {\rm if}\  (u,v)\in D_{L,x_0}\cap T_{L, \phi_{x_0}},\\ 
		\min\Big\{u-x_0, 	v-\max\{\hat\Gamma^{II}_{\phi_{x_0}}(t)|t\in[ \phi_{x_0}(u-x_0),v]\}\Big\},&  {\rm if}\  (u,v)\in D_{L,x_0}\cap T_{U, \phi_{x_0}}, \\
		W(u,v),&  \ {\rm otherwise} 
	\end{array}\right.
\end{equation}
	is a quasi-copula belonging to $\mathbb{Q}_{\Gamma_{\phi_{x_0}}}$, and its symmetrization  given by 
	\begin{equation}\label{eq:A copula*}G^*_{\Gamma_{\phi_{x_0}}}(u,v)
		=\left\{\begin{array}{ll}
			\min\Big\{\min\{u,v\},\max\{u,v\}-x_0- & {\rm if}\  (u,v)\in [0,1]^2 \backslash S_{x_0}\ \ {\rm and}\\ 
		\ \ \ \ \ \ \ \ \ \ 	\max\{\hat\Gamma^{I}_{\phi_{x_0}}(t)|t\in[\min\{u,v\},\phi_{x_0}(\max\{u,v\}-x_0)]\}\Big\}, &  \min\{u,v\}<\phi_{x_0}\big(\max\{u,v\}-x_0\big),\\
			\min\Big\{\max\{u,v\}-x_0, 	\min\{u,v\}-&  {\rm if}\  (u,v)\in [0,1]^2 \backslash S_{x_0}\ \ {\rm and}\\
			\ \ \ \ \ \ \ \ \ \ 	\max\{\hat\Gamma^{II}_{\phi_{x_0}}(t)|t\in[ \phi_{x_0}(\max\{u,v\}-x_0),\min\{u,v\}]\}\Big\},& \min\{u,v\}\ge \phi_{x_0}\big(\max\{u,v\}-x_0\big), \\
			W(u,v),&  \ {\rm otherwise} 
		\end{array}\right.
	\end{equation}
	is a symmetric quasi-copula belonging to  $\mathbb{Q}_{\Gamma_{\phi_{x_0}}}$.
\end{prop}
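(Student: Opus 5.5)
The plan mirrors the proof of Proposition \ref{prop:Bertino copula*}. I will identify $K_{\Gamma_{\phi_{x_0}}}$ and $G_{\Gamma_{\phi_{x_0}}}$ as the $W$-ordinal sums
\[
\big(\langle 0,x_0,W\rangle,\langle x_0,1,K_{\Gamma_\phi}\rangle\big)^W \quad\text{and}\quad \big(\langle 0,x_0,W\rangle,\langle x_0,1,A_{\Gamma_\phi}\rangle\big)^W,
\]
where $\phi(t)=\phi_{x_0}(t(1-x_0))/(1-x_0)$ and $\Gamma_\phi$ is given by \eqref{eq:G}. The quasi-copula and copula properties and the sub-track section then come from Theorem \ref{prop:G} and the $W$-ordinal sum result of \cite{2004W}. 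The symmetric versions come from Lemma \ref{lem:sym}.

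For ($ii$), the proof of Theorem \ref{prop:G} shows that $\Gamma_\phi$ is a track section along $\phi$. Hence $A_{\Gamma_\phi}$ in \eqref{eq:A copula} is a quasi-copula with track section $\Gamma_\phi$ \citep{2022Best}. For $(u,v)\in D_{L,x_0}$ I will compute $(1-x_0)A_{\Gamma_\phi}\big(\frac{u-x_0}{1-x_0},\frac{v}{1-x_0}\big)$. The condition $\frac{v}{1-x_0}<\phi\big(\frac{u-x_0}{1-x_0}\big)$ is equivalent to $v<\phi_{x_0}(u-x_0)$, that is, to $T_{L,\phi_{x_0}}$. In the max over $t$ I substitute $s=\phi_{x_0}(t(1-x_0))$, so that
\[
(1-x_0)\big(t-\Gamma_\phi(t)\big)=\phi^{-1}_{x_0}(s)-\Gamma_{\phi_{x_0}}(s)=\hat\Gamma^I_{\phi_{x_0}}(s),
\]
and $(1-x_0)\big(\phi(t)-\Gamma_\phi(t)\big)=\hat\Gamma^{II}_{\phi_{x_0}}(s)$. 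The interval $[\phi^{-1}(\frac{v}{1-x_0}),\frac{u-x_0}{1-x_0}]$ maps to $[v,\phi_{x_0}(u-x_0)]$. This gives exactly the two branches of \eqref{eq:A copula0}, with the inner minima $\min\{v,\cdot\}$ and $\min\{u-x_0,\cdot\}$ preserved after the factor $1-x_0$ is absorbed. Theorem \ref{prop:G} with $Q_1=W$, $Q_2=A_{\Gamma_\phi}$ then gives $G_{\Gamma_{\phi_{x_0}}}\in\mathbb{Q}_{\Gamma_{\phi_{x_0}}}$.

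For ($i$), the same substitution gives, on $D_{L,x_0}$,
\[
(1-x_0)K_{\Gamma_\phi}\Big(\tfrac{u-x_0}{1-x_0},\tfrac{v}{1-x_0}\Big)=\min\Big\{u-x_0,\,v,\,\tfrac{\Gamma_{\phi_{x_0}}(\phi_{x_0}(u-x_0))+\Gamma_{\phi_{x_0}}(v)}{2}\Big\}.
\]
This uses $(1-x_0)\Gamma_\phi(\frac{u-x_0}{1-x_0})=\Gamma_{\phi_{x_0}}(\phi_{x_0}(u-x_0))$ and $(1-x_0)\Gamma_\phi(\phi^{-1}(\frac{v}{1-x_0}))=\Gamma_{\phi_{x_0}}(v)$.

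The main obstacle is checking that $K_{\Gamma_\phi}$ is a copula. By \citet{2022Best} this holds if and only if $2t-\Gamma_\phi(t)$ and $2t-\Gamma_\phi(\phi^{-1}(t))$ are nondecreasing on $[0,1]$. The second function equals $2t-\Gamma_{\phi_{x_0}}(t(1-x_0))/(1-x_0)$, and the first is $2t-\Gamma_{\phi_{x_0}}(\phi_{x_0}(t(1-x_0)))/(1-x_0)$. These are precisely the hypothesised functions, once the stated domain $[0,1-x_0]$ is read as the appropriate rescaled variable. I will state this reading explicitly and check that monotonicity on the rescaled interval is what is needed.

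Given this, $K_{\Gamma_{\phi_{x_0}}}$ is a $W$-ordinal sum of copulas and hence a copula. Theorem \ref{prop:G} gives its sub-track section.

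Finally, for both $K^*$ and $G^*$ I will use the observation from Proposition \ref{prop:Bertino copula*}: $(\max\{u,v\},\min\{u,v\})\in D_{L,x_0}$ if and only if $(u,v)\notin S_{x_0}$. Substituting into \eqref{eq:K copula0} and \eqref{eq:A copula0} gives \eqref{eq:K copula*} and \eqref{eq:A copula*}, and Lemma \ref{lem:sym} gives symmetry and the quasi-copula property. Since $\phi_{x_0}(t)\le t$, we have $x_0+\phi^{-1}_{x_0}(t)\ge t$. The symmetrized function therefore agrees with the original on the $x_0$-curve, so the sub-track section is still $\Gamma_{\phi_{x_0}}$.
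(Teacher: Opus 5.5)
Your proposal is correct and follows the paper's own argument. The paper also identifies $K_{\Gamma_{\phi_{x_0}}}$ and $G_{\Gamma_{\phi_{x_0}}}$ with $(\langle 0,x_0,W\rangle,\langle x_0,1,K_{\Gamma_\phi}\rangle)^W$ and $(\langle 0,x_0,W\rangle,\langle x_0,1,A_{\Gamma_\phi}\rangle)^W$, invokes Theorem~\ref{prop:G}, and symmetrizes via Lemma~\ref{lem:sym}; it merely leaves as ``analogous'' the substitution $s=\phi_{x_0}(t(1-x_0))$ that you carry out.

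Your caution about the domain in ($i$) is justified, and the paper does not address it: monotonicity on $[0,1-x_0]$, as literally stated, is not enough. For example, take $x_0=0.2$, $\phi_{x_0}(s)=0.8(s/0.8)^3$ and $\Gamma_{\phi_{x_0}}(s)=s$. The two functions are then $2t-t^3$ and $t$, both nondecreasing on $[0,0.8]$, yet $K_{\Gamma_{\phi_{x_0}}}(0.76,0.8)=0.5372<0.56=W(0.76,0.8)$. So the hypothesis you need is that these functions be nondecreasing on all of $[0,1]$, i.e.\ that $2s-\Gamma_{\phi_{x_0}}(\phi_{x_0}(s))$ and $2s-\Gamma_{\phi_{x_0}}(s)$ be nondecreasing on $[0,1-x_0]$, which is exactly the known copula criterion for $K_{\Gamma_\phi}$.
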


\begin{proof}
By arguments analogous to those in  the proof of Proposition \ref{prop:Bertino copula*}, we obtain that $K_{\Gamma_{\phi_{x_0}}}$ admits the $W$-ordinal sum representation $(\langle 0,x_0, W \rangle, \langle x_0, 1, K_{\Gamma_{\phi}} \rangle)^W$, where  $\Gamma_{\phi}$ is defined in \eqref{eq:G}, and that  $K^*_{\Gamma_{\phi_{x_0}}}$  
	is its symmetrization. Similarly,   $G_{\Gamma_{\phi_{x_0}}}$ is given by  
	$G_{\Gamma_{\phi_{x_0}}}=(\langle 0,x_0, W \rangle, \langle x_0, 1, A_{\Gamma_{\phi}} \rangle)^W$ with $\Gamma_{\phi}$ as in \eqref{eq:G}, and   $G^*_{\Gamma_{\phi_{x_0}}}$
	denotes its symmetrization.  Theorem \ref{prop:G} then ensures that $K_{\Gamma_{\phi_{x_0}}}$ is a copula whenever the required nondecreasing condition holds,  while   $G_{\Gamma_{\phi_{x_0}}}$ is a quasi-copula,  whose sub-track  section along the  $x_0$-curve $\phi_{x_0}$ is exactly $\Gamma_{\phi_{x_0}}$.   Lemma \ref{lem:sym} implies that both the symmetrizations $K^*_{\Gamma_{\phi_{x_0}}}$ and $G^*_{\Gamma_{\phi_{x_0}}}$ are symmetric  quasi-copulas with the same sub-track section $\Gamma_{\phi_{x_0}}$  along $\phi_{x_0}$. 
\end{proof}

It is worth noting that, although the $W$-ordinal sum $K_{\Gamma_{\phi_{x_0}}}$ is a copula, its symmetrization  $K^*_{\Gamma_{\phi_{x_0}}}$ is generally a proper quasi-copula. Likewise,   $G^*_{\Gamma_{\phi_{x_0}}}$ is generally a proper quasi-copula.

\begin{example} For a fixed $x_0\in(0,1/3]$, let  $\Gamma_{\phi_{x_0}}(t)=t$ be the sub-track of the Fr\'{e}chet-Hoeffding upper bound $M$  along the $x_0$-curve $\phi_{x_0}(t)=(1-x_0)\big(\frac{t}{1-x_0}\big)^{\beta}$, $t\in[0, 1-x_0]$, where $\beta>1$ is a constant chosen sufficiently close to 1. Consider a  square $[u,v]^2\subseteq [0,1]^2$ satisfying that $x_0\le u<v\le u+x_0$, $u-x_0\le  \frac{\frac{(u-x_0)^{\beta}}{(1-x_0)^{\beta-1}}+u}{2}$ and  $v-x_0\le  \frac{\frac{(v-x_0)^{\beta}}{(1-x_0)^{\beta-1}}+u}{2}$. A straightforward calculation yields that 
	\begin{eqnarray*}
		&& V_{K^*_{\Gamma_{\phi_{x_0}}}}([u,v]\times[u,v])\nonumber\\
	&=&K^*_{\Gamma_{\phi_{x_0}}}(u,u) + K^*_{\Gamma_{\phi_{x_0}}}(v,v)- K^*_{\Gamma_{\phi_{x_0}}}(u,v)- K^*_{\Gamma_{\phi_{x_0}}}(v,u) \nonumber \\
		&=&\min\big\{u-x_0,\frac{\Gamma_{\phi_{x_0}}\big(\phi_{x_0}(u-x_0)\big)+\Gamma_{\phi_{x_0}}(u)}{2}\big\}+ \min\big\{v-x_0,\frac{\Gamma_{\phi_{x_0}}\big(\phi_{x_0}(v-x_0)\big)+\Gamma_{\phi_{x_0}}(v)}{2}\big\} \nonumber \\ 	
		&&-2\min\big\{v-x_0,\frac{\Gamma_{\phi_{x_0}}\big(\phi_{x_0}(v-x_0)\big)+\Gamma_{\phi_{x_0}}(u)}{2}\big\}\nonumber\\ 
		&=&\min\big\{u-x_0,\frac{\frac{(u-x_0)^{\beta}}{(1-x_0)^{\beta-1}}+u}{2}\big\}+ \min\big\{v-x_0,\frac{\frac{(v-x_0)^{\beta}}{(1-x_0)^{\beta-1}}+v}{2}\big\}-2\min\big\{v-x_0,\frac{\frac{(v-x_0)^{\beta}}{(1-x_0)^{\beta-1}}+u}{2}\big\}\nonumber\\
		&=&u-v<0,
	\end{eqnarray*}	
since  $u<v$.  
\end{example}


\begin{example} Let $x_0\in(0,1/3]$ and $\Gamma_{\phi_{x_0}}(t)=t$ be the sub-track of the Fr\'{e}chet-Hoeffding upper bound $M$  along the $x_0$-curve $\phi_{x_0}(t)=(1-x_0)\big(\frac{t}{1-x_0}\big)^{\beta}$, $t\in[0, 1-x_0]$, $\beta\ge 1$. 
	Consider an arbitrary square $[u,v]^2\subseteq [0,1]^2$ satisfying that $x_0\le u<v\le u+x_0$. Since $\hat\Gamma^{II}_{\phi_{x_0}}(t)=t-\Gamma_{\phi_{x_0}}(t)=0$ for every $t\in[0, 1-x_0]$, a straightforward calculation yields that, for all $x_0\le u<v\le u+x_0$, 
	\begin{eqnarray*}
	V_{G^*_{\Gamma_{\phi_{x_0}}}}([u,v]\times[u,v])	&=&G^*_{\Gamma_{\phi_{x_0}}}(u,u) + G^*_{\Gamma_{\phi_{x_0}}}(v,v)- G^*_{\Gamma_{\phi_{x_0}}}(u,v)- G^*_{\Gamma_{\phi_{x_0}}}(v,u) \nonumber \\
		&=&\min\big\{u-x_0,u\big\}+ \min\big\{v-x_0,v\}-2 \min\big\{v-x_0,u\} \nonumber \\
		&=&u-v<0.
	\end{eqnarray*}	
\end{example}

\section{A family of symmetric copulas with a prescribed sub-track section}

In general, quasi-copulas with a prescribed sub-track section constructed via the symmetrization of a $W$-ordinal sum are proper quasi-copulas. 
We now address the construction of a symmetric copula $C$, rather than a quasi-copula, whose sub-track section along the $x_0$-curve $\phi_{x_0}$ coincides with 
$\Gamma_{\phi_{x_0}}$, where $x_0\in(0, 1/2]$. Although the resulting copula  is symmetric, it is not obtained by symmetrizing a $W$-ordinal
sum. Therefore, the conditions of a copula must be established by a direct verification. 

\begin{thm}\label{thm1}
	For a fixed $x_0\in(0, 1/2]$, let  $\Gamma_{\phi_{x_0}}$  be a $\phi_{x_0}$-function with  $\phi_{x_0}\in\Phi_{x_0}$ satisfying  $\phi_{x_0}(t)\le t$ for all $t\in[0,1-x_0]$.  Suppose that the function  
	$\Gamma_{\phi_{x_0}}(t)-\Gamma_{\phi_{x_0}}\big(\phi_{x_0}(t-x_0)\big)$ is nondecreasing on  $[x_0, 1-x_0]$, and $t+\hat \Gamma^I_{\phi_{x_0}}\big(\phi_{x_0}(t)\big)$ as well as $t+\hat \Gamma^{II}_{\phi_{x_0}}(t)$ are nondecreasing on  $[0, 1-x_0]$. Then, the function $C_{\Gamma_{\phi_{x_0}}}:$ $[0, 1]^2\rightarrow$ $[0, 1]$ defined by 
	\begin{equation}\label{eq:C copula*}C_{\Gamma_{\phi_{x_0}}}(u,v)
		=\left\{\begin{array}{ll}
			\min\Big\{u,v,	\frac{\Gamma_{\phi_{x_0}}\big(\phi_{x_0}( \max\{u,v\}-x_0)\big)+\Gamma_{\phi_{x_0}}(\min\{u,v\})}{2}\Big\}, & {\rm if}\  (u,v)\in [0,1]^2 \backslash S_{x_0},\\ 
	\frac{\Gamma_{\phi_{x_0}}(\min\{u,v\})}{2},&  {\rm if}\  (u,v)\in S_{L, x_0},\\
			\min\{H_{\phi_{x_0}}(u,v), H_{\phi_{x_0}}(v,u)\},& {\rm if}\  (u,v)\in S_{U, x_0}, 
		\end{array}\right.
	\end{equation}	
	where 
	\begin{equation}\label{eq:C copula*1}
		H_{\phi_{x_0}}(u,v):=u+\frac{\Gamma_{\phi_{x_0}}\big(\phi_{x_0}(v-x_0)\big)-(1-x_0)}{2},  
	\end{equation}
	is a symmetric copula whose sub-track section along the  $x_0$-curve $\phi_{x_0}$ coincides with $\Gamma_{\phi_{x_0}}$, that is,  $C_{\Gamma_{\phi_{x_0}}}\in\mathbb{C}_{\Gamma_{\phi_{x_0}}}$.	
\end{thm}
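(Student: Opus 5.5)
We verify directly that $C:=C_{\Gamma_{\phi_{x_0}}}$ satisfies (C1) and (C2), together with symmetry and the sub-track property, by working region by region. The square splits into $S_{L,x_0}$, $S_{U,x_0}$ and $[0,1]^2\setminus S_{x_0}$. Since $x_0\le 1/2$, the squares $S_{L,x_0}$ and $S_{U,x_0}$ are disjoint or touch at a single point.

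\textbf{Symmetry, sub-track section and boundary conditions.} Symmetry is immediate, since every branch depends only on $\min\{u,v\}$ and $\max\{u,v\}$, or is a $\min$ of $H(u,v)$ and $H(v,u)$. For the sub-track section, the point $(x_0+\phi^{-1}_{x_0}(t),t)$ lies outside $S_{x_0}$ except at the endpoints. Moreover $\max=x_0+\phi^{-1}_{x_0}(t)$ and $\min=t$ because $\phi_{x_0}(t)\le t$, so the third argument of the $\min$ equals $\Gamma_{\phi_{x_0}}(t)$. This is at most $t$ and at most $x_0+\phi^{-1}_{x_0}(t)$ by (S1), so $C=\Gamma_{\phi_{x_0}}(t)$.

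For boundary conditions:
\begin{itemize}
\item On $u=0$ (with $v>x_0$), $\min\{u,v\}=0$ gives $C=0$.
\item On $v=1$ and $u\le 1-x_0$, the third argument equals $(\Gamma_{\phi_{x_0}}(1-x_0)+\Gamma_{\phi_{x_0}}(u))/2=(1-x_0+\Gamma_{\phi_{x_0}}(u))/2$. This is $\ge u$ because $\Gamma_{\phi_{x_0}}(u)\ge u+x_0+\phi^{-1}_{x_0}(u)-1\ge 2u-(1-x_0)$ for $u\le 1-x_0$, by (S1) and $\phi^{-1}_{x_0}(u)\ge u$. Hence $C(u,1)=u$.
\item On $S_{U,x_0}$ with $v=1$, we get $H(u,1)=u$, while $H(1,u)\ge u$ follows from the same inequality applied at $t=\phi_{x_0}(u-x_0)$. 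Hence $C(u,1)=u$ there too.
\item On $S_{L,x_0}$ the boundary $u=0$ gives $0$.
\end{itemize}

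\textbf{2-increasing condition.} Every branch is continuous, and the formulas agree on the interfaces $\partial S_{L,x_0}$ and $\partial S_{U,x_0}$; this must be checked explicitly. For example, at $\max\{u,v\}=x_0$ the value $\Gamma_{\phi_{x_0}}(\phi_{x_0}(0))=0$ makes the outer formula equal to $\min\{u,v,\Gamma_{\phi_{x_0}}(\min\{u,v\})/2\}=\Gamma_{\phi_{x_0}}(\min)/2$. Given this agreement, it suffices to show that $V_C\ge 0$ on rectangles lying inside each region, or inside each half $\{u\ge v\}$, $\{u\le v\}$ of the outer region. Rectangles straddling the diagonal then follow by symmetry and additivity.

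\begin{itemize}
\item \emph{On $S_{L,x_0}$.} Here $C=f(\min\{u,v\})$ with $f=\Gamma_{\phi_{x_0}}/2$ nondecreasing, and such a function is 2-increasing.
\item \emph{On $S_{U,x_0}$.} Here $C=\min\{u+g(v),v+g(u)\}$ with $g(s)=(\Gamma_{\phi_{x_0}}(\phi_{x_0}(s-x_0))-(1-x_0))/2$. This is 2-increasing when $s-g(s)$ is nondecreasing, by the standard argument for $\min\{u+g(v),v+g(u)\}$-type (diagonal/Bertino-like) copulas. That monotonicity follows from the hypothesis that $t+\hat\Gamma^I_{\phi_{x_0}}(\phi_{x_0}(t))$ is nondecreasing, after rewriting $\Gamma_{\phi_{x_0}}(\phi_{x_0}(s))=s-\hat\Gamma^I_{\phi_{x_0}}(\phi_{x_0}(s))$.
\item \emph{On the outer region with $u\ge v$.} Here $C(u,v)=\min\{v,\,a(u)+b(v)\}$ (the bound $\le u$ is automatic where relevant), with $a(u)=\Gamma_{\phi_{x_0}}(\phi_{x_0}(u-x_0))/2$ and $b(v)=\Gamma_{\phi_{x_0}}(v)/2$. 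A function of the form $\min\{v,a(u)+b(v)\}$ with $a,b$ nondecreasing is 2-increasing provided $v-b(v)$ is nondecreasing, by the classical $K$-copula argument. This is exactly where $t+\hat\Gamma^{II}_{\phi_{x_0}}(t)$ nondecreasing enters, since $2(v-b(v))=v+\hat\Gamma^{II}_{\phi_{x_0}}(v)$.
\item \emph{The diagonal $u=v$ of the outer region.} The hypothesis that $\Gamma_{\phi_{x_0}}(t)-\Gamma_{\phi_{x_0}}(\phi_{x_0}(t-x_0))$ is nondecreasing on $[x_0,1-x_0]$ should be what makes squares $[s,t]^2$ centered on the diagonal have nonnegative volume. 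The diagonal value $(a(t)+b(t))$ must grow at least as fast as twice the off-diagonal increments.
\end{itemize}

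\textbf{Main obstacle.} We expect the hardest part to be these mixed checks: squares on the diagonal, and rectangles crossing from the outer region into $S_{U,x_0}$ or $S_{L,x_0}$, where the active branch of the $\min$ switches. The approach is to split each such rectangle along the switching curves into pieces, each governed by a single affine-in-one-variable formula, and to reduce its volume to an increment of one of the three monotone functions in the hypotheses.
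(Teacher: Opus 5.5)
Your plan has the same architecture as the paper's proof: boundary conditions, then $S_{L,x_0}$, $S_{U,x_0}$, the two halves of the outer region and diagonal squares, glued by symmetry and additivity. Your treatment of the lower outer half via $\min\{v,a(u)+b(v)\}=b(v)+\min\{v-b(v),a(u)\}$ is a tidier substitute for the paper's sub-cases (a)--(c).

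\textbf{The $S_{U,x_0}$ boundary step is wrong as justified.} Write $\Gamma=\Gamma_{\phi_{x_0}}$ and $\phi=\phi_{x_0}$. The inequality $H(1,u)\ge u$ is equivalent to $\Gamma(\phi(u-x_0))\ge 2(u-x_0)-(1-x_0)$. Applying $\Gamma(t)\ge 2t-(1-x_0)$ at $t=\phi(u-x_0)$ only gives $\Gamma(\phi(u-x_0))\ge 2\phi(u-x_0)-(1-x_0)$. That is weaker, not stronger, because $\phi(u-x_0)\le u-x_0$. In fact (S1)--(S2) alone cannot give it: for the $W$-section $\Gamma(t)=\max\{x_0+t+\phi^{-1}(t)-1,0\}$, the required inequality reduces, for $s=u-x_0$ near $1-x_0$, to $\phi(s)\ge s$. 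This fails, e.g., for $\phi(t)=t^2/(1-x_0)$.

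What you need is the hypothesis that $t+\hat\Gamma^{I}_{\phi_{x_0}}(\phi(t))=2t-\Gamma(\phi(t))$ is nondecreasing, i.e.\ that your $s-g(s)$ is nondecreasing on $[1-x_0,1]$. Then $u-g(u)\le 1-g(1)=1$, which is exactly $H(u,1)\le H(1,u)$. The paper gets this from $H(u,v)\le H(v,u)$ for $1-x_0\le u\le v\le 1$, with $v=1$.

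\textbf{The interface agreement on $\partial S_{U,x_0}$ is unchecked.} The same monotonicity is what makes the outer formula agree with the $S_{U,x_0}$ formula on $\min\{u,v\}=1-x_0$. There the outer formula, extended, gives $\frac{1}{2}\bigl(1-x_0+\Gamma(\phi(\max\{u,v\}-x_0))\bigr)=H(\min\{u,v\},\max\{u,v\})$. Agreement with $\min\{H(u,v),H(v,u)\}$ requires $H(\min,\max)\le H(\max,\min)$, which is again the monotonicity of $s-g(s)$. Your reduction to region-wise rectangles relies on this agreement, but you verified it only on $\partial S_{L,x_0}$.

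\textbf{The diagonal squares of the outer band are only conjectured.} The check is short. For $x_0\le s<t\le 1-x_0$ one has $C(s,s)=a(s)+b(s)$, because $\Gamma(\phi(s-x_0))\le \phi(s-x_0)\le s-x_0$ and $\Gamma(s)\le s$. Also $C(s,t)=\min\{s,a(t)+b(s)\}\le a(t)+b(s)$. Hence $V_C([s,t]^2)\ge (b-a)(t)-(b-a)(s)\ge 0$ by the hypothesis that $\Gamma(t)-\Gamma(\phi(t-x_0))$ is nondecreasing.

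With this and the interface agreement in place, your ``main obstacle'' disappears. Rectangles crossing regions reduce, via axis-parallel cuts at $u,v\in\{x_0,1-x_0\}$ and the diagonal-square decomposition, to pieces you have already handled. The switching of the active branch of the $\min$ inside the outer half is absorbed by your $K$-type argument. So no splitting along switching curves is needed, and such splits would not produce rectangles anyway.
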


\begin{proof}
It is obvious that  the $x_0$-curve $\phi_{x_0}$, which connects the points $(x_0, 0)$ and $(1, 1-x_0)$, is entirely contained in the region $[0,1]^2 \backslash S_{x_0}$. It follows, by the property (S1) and the assumption $\phi_{x_0}(t)\le t$ for all $t\in[0,1-x_0]$, that
	$$C_{\Gamma_{\phi_{x_0}}}\big(x_0+\phi^{-1}_{x_0}(t),t\big)=\min\Big\{x_0+\phi^{-1}_{x_0}(t),t,	\frac{\Gamma_{\phi_{x_0}}\Big(\phi_{x_0}\big(x_0+\phi^{-1}_{x_0}(t)-x_0\big)\Big)+\Gamma_{\phi_{x_0}}(t)}{2}\Big\}=\min\big\{t,	\Gamma_{\phi_{x_0}}(t)\big\}=\Gamma_{\phi_{x_0}}(t).$$
 Thus, the last claim of the theorem  is established.

From the expression of $C_{\Gamma_{\phi_{x_0}}}$, it is immediate that   $C_{\Gamma_{\phi_{x_0}}}(u,v)=C_{\Gamma_{\phi_{x_0}}}(v,u)$ for all $(u,v)\in[0,1]^2$, and thus $C_{\Gamma_{\phi_{x_0}}}$ is symmetric on $[0,1]^2$.
	
We now prove that $C_{\Gamma_{\phi_{x_0}}}$ 
	is indeed a copula. The first boundary condition  in Definition  \ref{df:copula} is
	easily verified. The second condition, however, is less straightforward and  requires a separate  argument. 
	\begin{itemize}
		\item To this end, assume first that $v < 1-x_0$. Applying the property (S2), it 
		yields that
		$$\Gamma_{\phi_{x_0}}(1-x_0)-\Gamma_{\phi_{x_0}}(v)\le 1-x_0-v+\phi^{-1}_{x_0}(1-x_0)-\phi^{-1}_{x_0}(v).$$
		Using the equality $\Gamma_{\phi_{x_0}}(1-x_0)=1-x_0$ and the assumption $\phi_{x_0}(t)\le t$, this inequality is rewritten as 
		$$2v\le \phi^{-1}_{x_0}(v)+v\le \Gamma_{\phi_{x_0}}(v) +1-x_0. $$
		Then, 
		$$C_{\Gamma_{\phi_{x_0}}}(1,v)=\min\Big\{1,v, 	\frac{1-x_0+\Gamma_{\phi_{x_0}}(v)}{2}\Big\}=v.$$
		\item Next, we turn to the case $v \ge 1-x_0$. Under this assumption,
		$$C_{\Gamma_{\phi_{x_0}}}(1,v)=	\min\{H_{\phi_{x_0}}(1,v), H_{\phi_{x_0}}(v,1)\}.$$
		
		Notice that for all $(u,v)\in S_{U,x_0}$,
		\begin{equation}\label{ieq:1} H_{\phi_{x_0}}(u,v)\le  H_{\phi_{x_0}}(v,u)\end{equation} if and only if 
		$$u-\frac{\Gamma_{\phi_{x_0}}\big(\phi_{x_0}(u-x_0)\big)}{2}\le v-\frac{\Gamma_{\phi_{x_0}}\big(\phi_{x_0}(v-x_0)\big)}{2}. $$
		As a consequence of the assumption that $t+\hat \Gamma^{I}_{\phi_{x_0}}\big(\phi_{x_0}(t)\big)=2t-\Gamma_{\phi_{x_0}}\big(\phi_{x_0}(t)\big) $ is nondecreasing on  $[0, 1-x_0]$, it follows that,  for all $1-x_0\le u\le v\le 1$, the inequality \eqref{ieq:1} holds. 
		Hence, 	$$C_{\Gamma_{\phi_{x_0}}}(1,v)=H_{\phi_{x_0}}(v,1)= v+\frac{\Gamma_{\phi_{x_0}}\big(\phi_{x_0}(1-x_0)\big)-(1-x_0)}{2}=v.$$
	\end{itemize}

Since the boundary conditions have been established for  $C_{\Gamma_{\phi_{x_0}}}$, the only remaining requirement is to verify the 2-increasing property, i.e., the condition
(C2) of Definition \ref{df:copula}. To this end,  several cases need to be examined.	
	
\noindent \textit{Case 1}: We now consider a rectangle $[u_1,  u_2]\times [v_1, v_2]$ that is completely contained either in  $T_{L,\phi_{x_0}}$ or in 
 the region bounded by the lines $u=x_0$, $v=1-x_0$,  $v=u$ and the $x_0$-curve $v=\phi_{x_0}(u-x_0)$. Then, the rectangle is entirely contained in  $[0,1]^2 \backslash S_{x_0}$, and thus 
	\begin{eqnarray*}
	&& V_{C_{\Gamma_{\phi_{x_0}}}}([u_1,u_2]\times[v_1,v_2])\nonumber \\
	&=&	\min\Big\{v_1,	\frac{\Gamma_{\phi_{x_0}}\big(\phi_{x_0}(u_1-x_0)\big)+\Gamma_{\phi_{x_0}}(v_1)}{2}\Big\}+\min\Big\{v_2,	\frac{\Gamma_{\phi_{x_0}}\big(\phi_{x_0}(u_2-x_0)\big)+\Gamma_{\phi_{x_0}}(v_2)}{2}\Big\} \nonumber \\
	&&-\min\Big\{v_1,	\frac{\Gamma_{\phi_{x_0}}\big(\phi_{x_0}(u_2-x_0)\big)+\Gamma_{\phi_{x_0}}(v_1)}{2}\Big\}-\min\Big\{v_2,	\frac{\Gamma_{\phi_{x_0}}\big(\phi_{x_0}(u_1-x_0)\big)+\Gamma_{\phi_{x_0}}(v_2)}{2}\Big\},\nonumber
\end{eqnarray*}
since $v_i\le u_j$ for all $i,j\in\{1,2\}$. 

The proof proceeds by considering the following sub-cases:

\noindent (a) $v_1\le \frac{\Gamma_{\phi_{x_0}}\big(\phi_{x_0}(u_1-x_0)\big)+\Gamma_{\phi_{x_0}}(v_1)}{2}\le \frac{\Gamma_{\phi_{x_0}}\big(\phi_{x_0}(u_2-x_0)\big)+\Gamma_{\phi_{x_0}}(v_1)}{2}$: 
$$V_{C_{\Gamma_{\phi_{x_0}}}}([u_1,u_2]\times[v_1,v_2])=\min\Big\{v_2,	\frac{\Gamma_{\phi_{x_0}}\big(\phi_{x_0}(u_2-x_0)\big)+\Gamma_{\phi_{x_0}}(v_2)}{2}\Big\}-\min\Big\{v_2,	\frac{\Gamma_{\phi_{x_0}}\big(\phi_{x_0}(u_1-x_0)\big)+\Gamma_{\phi_{x_0}}(v_2)}{2}\Big\}\ge 0$$
	since $\Gamma_{\phi_{x_0}}(\phi_{x_0}(t))$ is nondecreasing with respect to $t$ and $u_2\ge u_1$. 
	
\noindent (b) $ \frac{\Gamma_{\phi_{x_0}}\big(\phi_{x_0}(u_1-x_0)\big)+\Gamma_{\phi_{x_0}}(v_1)}{2}< v_1\le \frac{\Gamma_{\phi_{x_0}}\big(\phi_{x_0}(u_2-x_0)\big)+\Gamma_{\phi_{x_0}}(v_1)}{2}$, then the following cases can be distinguished:
\begin{itemize}
	\item [(b1)] It is worth noting that the two inequalities  $\frac{\Gamma_{\phi_{x_0}}\big(\phi_{x_0}(u_1-x_0)\big)+\Gamma_{\phi_{x_0}}(v_1)}{2}< v_1$ and $v_2<\frac{\Gamma_{\phi_{x_0}}\big(\phi_{x_0}(u_1-x_0)\big)+\Gamma_{\phi_{x_0}}(v_2)}{2}$ cannot hold simultaneously. Indeed, these
	two inequalities imply 
	$$v_2-v_1<\frac{\Gamma_{\phi_{x_0}}\big(\phi_{x_0}(u_1-x_0)\big)+\Gamma_{\phi_{x_0}}(v_2)}{2}-\frac{\Gamma_{\phi_{x_0}}\big(\phi_{x_0}(u_1-x_0)\big)+\Gamma_{\phi_{x_0}}(v_1)}{2}=\frac{\Gamma_{\phi_{x_0}}(v_2)-\Gamma_{\phi_{x_0}}(v_1)}{2},$$
	which contradicts the assumption that $t+\hat \Gamma^{II}_{\phi_{x_0}}(t)=2t-\Gamma_{\phi_{x_0}}(t)$ is nondecreasing on  $[0, 1-x_0]$. Hence, the following cases remain:
	\item [(b2)] If $\frac{\Gamma_{\phi_{x_0}}\big(\phi_{x_0}(u_1-x_0)\big)+\Gamma_{\phi_{x_0}}(v_2)}{2}\le v_2<\frac{\Gamma_{\phi_{x_0}}\big(\phi_{x_0}(u_2-x_0)\big)+\Gamma_{\phi_{x_0}}(v_2)}{2}$, then, by the nondecreasing property of   $t+\hat \Gamma^{II}_{\phi_{x_0}}(t)$   on  $[0, 1-x_0]$, it follows that 
	$$V_{C_{\Gamma_{\phi_{x_0}}}}([u_1,u_2]\times[v_1,v_2])=	\frac{\Gamma_{\phi_{x_0}}\big(\phi_{x_0}(u_1-x_0)\big)+\Gamma_{\phi_{x_0}}(v_1)}{2}-	\frac{\Gamma_{\phi_{x_0}}\big(\phi_{x_0}(u_1-x_0)\big)+\Gamma_{\phi_{x_0}}(v_2)}{2}+v_2-v_1\ge 0.$$
		\item [(b3)] If $\frac{\Gamma_{\phi_{x_0}}\big(\phi_{x_0}(u_1-x_0)\big)+\Gamma_{\phi_{x_0}}(v_2)}{2}\le \frac{\Gamma_{\phi_{x_0}}\big(\phi_{x_0}(u_2-x_0)\big)+\Gamma_{\phi_{x_0}}(v_2)}{2}\le v_2$, then we have
	$$V_{C_{\Gamma_{\phi_{x_0}}}}([u_1,u_2]\times[v_1,v_2])=	\frac{\Gamma_{\phi_{x_0}}\big(\phi_{x_0}(u_2-x_0)\big)+\Gamma_{\phi_{x_0}}(v_1)}{2}-v_1\ge 0.$$
\end{itemize}

\noindent (c) $\frac{\Gamma_{\phi_{x_0}}\big(\phi_{x_0}(u_2-x_0)\big)+\Gamma_{\phi_{x_0}}(v_1)}{2}< v_1$. Then, it must hold that  $\frac{\Gamma_{\phi_{x_0}}\big(\phi_{x_0}(u_2-x_0)\big)+\Gamma_{\phi_{x_0}}(v_2)}{2}\le v_2$. Indeed, otherwise the inequalities $\frac{\Gamma_{\phi_{x_0}}\big(\phi_{x_0}(u_2-x_0)\big)+\Gamma_{\phi_{x_0}}(v_1)}{2}< v_1$ 
and $\frac{\Gamma_{\phi_{x_0}}\big(\phi_{x_0}(u_2-x_0)\big)+\Gamma_{\phi_{x_0}}(v_2)}{2}> v_2$ imply
$$v_2-v_1<\frac{\Gamma_{\phi_{x_0}}(v_2)-\Gamma_{\phi_{x_0}}(v_1)}{2},$$
which contradicts the assumption that $t+\hat \Gamma^{II}_{\phi_{x_0}}(t)$ is nondecreasing on  $[0, 1-x_0]$. Hence, 
	\begin{eqnarray*}
V_{C_{\Gamma_{\phi_{x_0}}}}([u_1,u_2]\times[v_1,v_2])\nonumber	&=&		\frac{\Gamma_{\phi_{x_0}}\big(\phi_{x_0}(u_1-x_0)\big)+\Gamma_{\phi_{x_0}}(v_1)}{2}+	\frac{\Gamma_{\phi_{x_0}}\big(\phi_{x_0}(u_2-x_0)\big)+\Gamma_{\phi_{x_0}}(v_2)}{2} \nonumber \\
	&&-	\frac{\Gamma_{\phi_{x_0}}\big(\phi_{x_0}(u_2-x_0)\big)+\Gamma_{\phi_{x_0}}(v_1)}{2}-	\frac{\Gamma_{\phi_{x_0}}\big(\phi_{x_0}(u_1-x_0)\big)+\Gamma_{\phi_{x_0}}(v_2)}{2}=0.\nonumber
\end{eqnarray*}

Consequently, in this case we always have   $V_{C_{\Gamma_{\phi_{x_0}}}}([u_1,u_2]\times[v_1,v_2])\ge 0$.	
	
\noindent \textit{Case 2}:  Consider the rectangle $[u_1, u_2] \times[\phi_{x_0}(u_1-x_0), \phi_{x_0}(u_2-x_0)]$, where $u_1$ and $u_2$ satisfy $1\ge u_1\ge x_0\ge u_2-u_1>0$. 
Then, under the assumption that $\phi_{x_0}(t)\le t$, we have that for $i=1,2$, 
$$C_{\Gamma_{\phi_{x_0}}}\big(u_i, \phi_{x_0}(u_i-x_0)\big)=\min\Big\{\phi_{x_0}(u_i-x_0),\frac{\Gamma_{\phi_{x_0}}\big(\phi_{x_0}(u_i-x_0)\big)+\Gamma_{\phi_{x_0}}\big(\phi_{x_0}(u_i-x_0)\big)}{2}\Big\}=\Gamma_{\phi_{x_0}}\big(\phi_{x_0}(u_i-x_0)\big),$$
where the second equality follows from the property (S1)
of the $\phi_{x_0}$-function. Hence,  
	\begin{eqnarray*}
		&& V_{C_{\Gamma_{\phi_{x_0}}}}\big([u_1,u_2]\times[\phi_{x_0}(u_1-x_0), \phi_{x_0}(u_2-x_0)]\big)\nonumber \\
		&=&	\Gamma_{\phi_{x_0}}\big(\phi_{x_0}(u_1-x_0)\big)+\Gamma_{\phi_{x_0}}\big(\phi_{x_0}(u_2-x_0)\big)-\min\Big\{u_1, \phi_{x_0}(u_2-x_0),  	\frac{\Gamma_{\phi_{x_0}}\big(\phi_{x_0}(u_1-x_0)\big)+\Gamma_{\phi_{x_0}}\big(\phi_{x_0}(u_2-x_0)\big)}{2}\Big\} \nonumber \\
		&&-\min\Big\{u_2, \phi_{x_0}(u_1-x_0),	\frac{\Gamma_{\phi_{x_0}}\big(\phi_{x_0}(u_2-x_0)\big)+\Gamma_{\phi_{x_0}}\big(\phi_{x_0}(u_1-x_0)\big)}{2}\Big\},\nonumber\\
		&\ge & \frac{\Gamma_{\phi_{x_0}}\big(\phi_{x_0}(u_1-x_0)\big)+\Gamma_{\phi_{x_0}}\big(\phi_{x_0}(u_2-x_0)\big)}{2}-	\frac{\Gamma_{\phi_{x_0}}(\phi_{x_0}(u_2-x_0))+\Gamma_{\phi_{x_0}}(\phi_{x_0}(u_1-x_0))}{2}= 0.
	\end{eqnarray*}	
\noindent \textit{Case 3}: Consider a square $[u_1,u_2]\times[u_1,u_2]\subseteq[0,1]^2
\textbackslash S_{x_0}$, where $x_0 \le u_1 < u_2 \le 1-x_0$. Then 
	\begin{eqnarray*}
		&& V_{C_{\Gamma_{\phi_{x_0}}}}([u_1,u_2]\times[u_1,u_2])\nonumber \\
		&=& \min\Big\{u_1,  	\frac{\Gamma_{\phi_{x_0}}\big(\phi_{x_0}(u_1-x_0)\big)+\Gamma_{\phi_{x_0}}(u_1)}{2}\Big\}+ \min\Big\{u_2,  	\frac{\Gamma_{\phi_{x_0}}\big(\phi_{x_0}(u_2-x_0)\big)+\Gamma_{\phi_{x_0}}(u_2)}{2}\Big\} \nonumber \\
		&&-2\min\Big\{u_1,	\frac{\Gamma_{\phi_{x_0}}\big(\phi_{x_0}(u_2-x_0)\big)+\Gamma_{\phi_{x_0}}(u_1)}{2}\Big\}\nonumber\\
		&= & \frac{\Gamma_{\phi_{x_0}}\big(\phi_{x_0}(u_1-x_0)\big)+\Gamma_{\phi_{x_0}}(u_1)}{2}+\frac{\Gamma_{\phi_{x_0}}\big(\phi_{x_0}(u_2-x_0)\big)+\Gamma_{\phi_{x_0}}(u_2)}{2}\nonumber\\
			&&-2\min\Big\{u_1,	\frac{\Gamma_{\phi_{x_0}}\big(\phi_{x_0}(u_2-x_0)\big)+\Gamma_{\phi_{x_0}}(u_1)}{2}\Big\}\nonumber\\
		&\ge & \frac{\Gamma_{\phi_{x_0}}\big(\phi_{x_0}(u_1-x_0)\big)+\Gamma_{\phi_{x_0}}(u_1)}{2}+\frac{\Gamma_{\phi_{x_0}}\big(\phi_{x_0}(u_2-x_0)\big)+\Gamma_{\phi_{x_0}}(u_2)}{2}-\Big(\Gamma_{\phi_{x_0}}\big(\phi_{x_0}(u_2-x_0)\big)+\Gamma_{\phi_{x_0}}(u_1)\Big)\nonumber\\
		&=& \frac{\Gamma_{\phi_{x_0}}(u_2)-\Gamma_{\phi_{x_0}}\big(\phi_{x_0}(u_2-x_0)\big)}{2}-\frac{\Gamma_{\phi_{x_0}}(u_1)-\Gamma_{\phi_{x_0}}(\phi_{x_0}\big(u_1-x_0)\big)}{2}\ge 0, 
	\end{eqnarray*}	
where the second equality holds due to that   $\Gamma_{\phi_{x_0}}(t)\le t$ and $\phi_{x_0}(t)\le t$ for every $t\in[0,1-x_0]$, and the second inequality follows from the assumption that $\Gamma_{\phi_{x_0}}(t)-\Gamma_{\phi_{x_0}}\big(\phi_{x_0}(t-x_0)\big)$ is nondecreasing on $[x_0,1-x_0]$.
	
\noindent \textit{Case 4}: Consider a rectangle $[u_1,  u_2]\times [v_1, v_2]\subseteq S_{L, x_0}$ with $u_1\ge v_2$, that is, $0 \le v_1 < v_2 \le u_1 <  u_2 \le x_0$.
	Then 
		$$ V_{C_{\Gamma_{\phi_{x_0}}}}([u_1,  u_2]\times [v_1, v_2])=\frac{\Gamma_{\phi_{x_0}}(\min\{u_1,v_1\})}{2}+\frac{\Gamma_{\phi_{x_0}}(\min\{u_2,v_2\})}{2}-\frac{\Gamma_{\phi_{x_0}}(\min\{u_2,v_1\})}{2}-\frac{\Gamma_{\phi_{x_0}}(\min\{u_1,v_2\})}{2}=0.$$

\noindent \textit{Case 5}: Consider a square $[u_1,  u_2]\times [u_1, u_2]\subseteq S_{L, x_0}$. Then
	\begin{eqnarray*}
	V_{C_{\Gamma_{\phi_{x_0}}}}([u_1,  u_2]\times [u_1, u_2])	&=& \frac{\Gamma_{\phi_{x_0}}(\min\{u_1,u_1\})}{2}+\frac{\Gamma_{\phi_{x_0}}(\min\{u_2,u_2\})}{2}-\frac{\Gamma_{\phi_{x_0}}(\min\{u_2,u_1\})}{2}-\frac{\Gamma_{\phi_{x_0}}(\min\{u_1,u_2\})}{2}\nonumber\\
		&=& \frac{\Gamma_{\phi_{x_0}}(u_2)}{2}-\frac{\Gamma_{\phi_{x_0}}(u_1)}{2}\ge 0. 
	\end{eqnarray*}	

\noindent \textit{Case 6}: Consider a rectangle $[u_1,  u_2]\times [v_1, v_2]\subseteq S_{U, x_0}$ with $u_1\ge v_2$, that is, $1-x_0\le v_1 < v_2 \le u_1 < u_2 \le 1$. Then, from the relation \eqref{ieq:1} between $H_{\phi_{x_0}}(u,v)$ and $H_{\phi_{x_0}}(v,u)$ for $(u,v)\in S_{U, x_0}$, it yields that 
	\begin{eqnarray*}
	V_{C_{\Gamma_{\phi_{x_0}}}}([u_1,  u_2]\times [v_1, v_2])	
	&=& H_{\phi_{x_0}}(v_2,u_2)+H_{\phi_{x_0}}(v_1,u_1)-H_{\phi_{x_0}}(v_2,u_1)-H_{\phi_{x_0}}(v_1,u_2)\nonumber \\
&=&	v_2+\frac{\Gamma_{\phi_{x_0}}\big(\phi_{x_0}(u_2-x_0)\big)-(1-x_0)}{2}+v_1+\frac{\Gamma_{\phi_{x_0}}\big(\phi_{x_0}(u_1-x_0)\big)-(1-x_0)}{2}
		\nonumber \\
		&&-v_2-\frac{\Gamma_{\phi_{x_0}}\big(\phi_{x_0}(u_1-x_0)\big)-(1-x_0)}{2}-v_1-\frac{\Gamma_{\phi_{x_0}}\big(\phi_{x_0}(u_2-x_0)\big)-(1-x_0)}{2}=0.\nonumber 
	\end{eqnarray*}	

\noindent \textit{Case 7}: Consider a square $[u_1,  u_2]\times [u_1, u_2]\subseteq  S_{U, x_0}$.
Then, 
	\begin{eqnarray*}
	V_{C_{\Gamma_{\phi_{x_0}}}}([u_1,  u_2]\times [u_1, u_2])	&=& H_{\phi_{x_0}}(u_2,  u_2)+H_{\phi_{x_0}}(u_1,  u_1)-2H_{\phi_{x_0}}(u_1,  u_2)
		\nonumber \\
		&=& u_2-\frac{\Gamma_{\phi_{x_0}}\big(\phi_{x_0}(u_2-x_0)\big)}{2}-u_1+\frac{\Gamma_{\phi_{x_0}}\big(\phi_{x_0}(u_1-x_0)\big)}{2}\ge 0, \nonumber 
	\end{eqnarray*}	
where the  inequality follows from the assumption that
$t+\hat \Gamma^{I}_{\phi_{x_0}}\big(\phi_{x_0}(t)\big)=2t-\Gamma_{\phi_{x_0}}\big(\phi_{x_0}(t)\big)$ is  nondecreasing on  $[0, 1-x_0]$.

Taking into account the symmetry of  $C_{\Gamma_{\phi_{x_0}}}$, any rectangle $R\subset [0,1]^ 
2$ whose vertices lie in more than one of the  
considered regions can be divided into a finite number  of rectangles $R_i$, each belonging to one of the seven cases above. Then,  $V_{C_{\Gamma_{\phi_{x_0}}}}(R)$ equals the sum of  $V_{C_{\Gamma_{\phi_{x_0}}}}(R_i)$. Therefore, it holds that 
 $V_{C_{\Gamma_{\phi_{x_0}}}}(R)\ge 0$. 

This completes the proof.	\end{proof}

\begin{example}
	\label{EX: sc}  	Let $x_0=0.45$ and $\Gamma_{\phi_{x_0}}(t)=t$ be the sub-track section of the Fr\'{e}chet-Hoeffding upper bound $M$  along the $x_0$-curve $\phi_{x_0}(t)=\frac{t^2}{0.55}$, $t\in[0, 0.55]$. It can be verify that the function  
	$\Gamma_{\phi_{x_0}}(t)-\Gamma_{\phi_{x_0}}\big(\phi_{x_0}(t-x_0)\big)=t-\frac{(t-0.45)^2}{0.55}$ is nondecreasing on  $[0.45, 0.55]$, and $t+\hat \Gamma^I_{\phi_{x_0}}\big(\phi_{x_0}(t)\big)=2t-\frac{t^2}{0.55}$ as well as $t+\hat \Gamma^{II}_{\phi_{x_0}}(t)=t$ are nondecreasing on  $[0, 0.55]$. Therefore, the assumptions in Theorem \ref{thm1} are satisfied, and we obtain a symmetric copula $C_{\Gamma_{\phi_{x_0}}}$ with the prescribed  sub-track section   $\Gamma_{\phi_{x_0}}$,   expressed as 
		\begin{equation*}C_{\Gamma_{\phi_{x_0}}}(u,v)
		=\left\{\begin{array}{ll}
			\min\Big\{u,v,	\frac{\frac{(\max\{u,v\}-0.45)^2}{0.55}+\min\{u,v\}}{2}\Big\}, & {\rm if}\  (u,v)\in [0,1]^2 \backslash S_{x_0},\\ 
			\frac{\min\{u,v\}}{2},&  {\rm if}\  (u,v)\in S_{L, x_0},\\
			\min\{u+\frac{0.55}{2}\big((\frac{v-0.45}{0.55})^{2}-1\big), v+\frac{0.55}{2}\big((\frac{u-0.45}{0.55})^{2}-1\big)\},& {\rm if}\  (u,v)\in S_{U, x_0}. 
		\end{array}\right.
	\end{equation*}	
\end{example}

\section{The best-possible bounds for the class   $\mathbb{Q}_{\Gamma_{\phi_{x_0}}}$}

 For a prescribed sub-track section $\Gamma_{\phi_{x_0}}$, we have verified that the classes  $\mathbb{Q}_{\Gamma_{\phi_{x_0}}}$
and $\mathbb{C}_{\Gamma_{\phi_{x_0}}}$
are nonempty. In this section, we address the second problem stated in Introduction, that is, the determination of best-possible bounds for $\mathbb{Q}_{\Gamma_{\phi_{x_0}}}$. 


We begin by establishing the best-possible lower bound for the class  $\mathbb{Q}_{\Gamma_{\phi_{x_0}}}$.

\begin{thm}\label{B+}
For a fixed $x_0\in (0,1)$ and a $\phi_{x_0}$-function $\Gamma_{\phi_{x_0}}$ with $\phi_{x_0}\in\Phi_{x_0}$ satisfying   $\phi_{x_0}(t)\le t$ for all $t\in[0,1-x_0]$, the copula $B_{\Gamma_{\phi_{x_0}}}$
given by \eqref{eq:Bx} is the smallest (quasi-)copula
whose sub-track section along the $x_0$-curve $\phi_{x_0}$ is exactly $\Gamma_{\phi_{x_0}}$, i.e.,  $B_{\Gamma_{\phi_{x_0}}}\le Q$ for all $Q\in \mathbb{Q}_{\Gamma_{\phi_{x_0}}}$.
\end{thm}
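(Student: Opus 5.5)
The plan is a pointwise comparison: for every $Q\in\mathbb{Q}_{\Gamma_{\phi_{x_0}}}$ and every $(u,v)\in[0,1]^2$, show $B_{\Gamma_{\phi_{x_0}}}(u,v)\le Q(u,v)$. Proposition \ref{prop:Bertino copula*} already gives $B_{\Gamma_{\phi_{x_0}}}\in\mathbb{C}_{\Gamma_{\phi_{x_0}}}\subseteq\mathbb{Q}_{\Gamma_{\phi_{x_0}}}$. So once the inequality holds, $B_{\Gamma_{\phi_{x_0}}}$ is the minimum of both classes, and best-possibility is automatic.

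I split according to the three pieces of \eqref{eq:Bx}. Outside $D_{L,x_0}$ we have $B_{\Gamma_{\phi_{x_0}}}=W$, and $W\le Q$ holds for every quasi-copula, so this region is done. The only tools needed on $D_{L,x_0}$ are monotonicity (Q2) and the one-variable form of (Q3): $0\le Q(a,s)-Q(b,s)\le a-b$ for $a\ge b$, and similarly in the second argument. Each value of $B_{\Gamma_{\phi_{x_0}}}$ is then matched by moving from $(u,v)$ to a point $\big(x_0+\phi^{-1}_{x_0}(t),t\big)$ of the $x_0$-curve, where $Q=\Gamma_{\phi_{x_0}}(t)$ is known.

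\emph{Region $D_{L,x_0}\cap T_{L,\phi_{x_0}}$, i.e. $v<\phi_{x_0}(u-x_0)$.} Fix $t\in[v,\phi_{x_0}(u-x_0)]$. Then $x_0+\phi^{-1}_{x_0}(t)\le u$ and $t\ge v$. Monotonicity in the first argument gives $Q(u,v)\ge Q\big(x_0+\phi^{-1}_{x_0}(t),v\big)$. The Lipschitz bound in the second argument then gives
\[
Q(u,v)\ge Q\big(x_0+\phi^{-1}_{x_0}(t),t\big)-(t-v)=v-\big(t-\Gamma_{\phi_{x_0}}(t)\big)=v-\hat\Gamma^{II}_{\phi_{x_0}}(t).
\]
The map $\hat\Gamma^{II}_{\phi_{x_0}}$ is continuous and the interval is compact, so the minimum over $t$ is attained. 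Maximizing the lower bound over $t$ yields exactly the first line of \eqref{eq:Bx}.

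\emph{Region $D_{L,x_0}\cap T_{U,\phi_{x_0}}$, i.e. $v\ge\phi_{x_0}(u-x_0)$.} Fix $t\in[\phi_{x_0}(u-x_0),v]$. Then $t\le v$ and $x_0+\phi^{-1}_{x_0}(t)\ge u$. Monotonicity in the second argument gives $Q(u,v)\ge Q(u,t)$. Lipschitz in the first argument then gives
\[
Q(u,v)\ge\Gamma_{\phi_{x_0}}(t)-\big(x_0+\phi^{-1}_{x_0}(t)-u\big)=u-x_0-\hat\Gamma^{I}_{\phi_{x_0}}(t).
\]
Optimizing over $t$ again gives the second line of \eqref{eq:Bx}.

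I do not expect a real obstacle. The only points needing care are checking that the chosen points lie in $[0,1]^2$ and correctly on the curve, since $\phi^{-1}_{x_0}$ is only defined on $[0,1-x_0]$. On $D_{L,x_0}$ this holds because $v\le 1-x_0$ and $u-x_0\in[0,1-x_0]$. One also has to check that the interval endpoints match the definition of $B_{\Gamma_{\phi_{x_0}}}$. Note that the hypothesis $\phi_{x_0}(t)\le t$ is used only through Proposition \ref{prop:Bertino copula*}, i.e. for membership of $B_{\Gamma_{\phi_{x_0}}}$ in the class, and not in the inequality itself.
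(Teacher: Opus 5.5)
Your proposal is correct and is essentially the paper's proof. Outside $D_{L,x_0}$ you use $W\le Q$, and on $D_{L,x_0}$ you move from $(u,v)$ to a point $\big(x_0+\phi^{-1}_{x_0}(t),t\big)$ of the $x_0$-curve by monotonicity plus a one-variable Lipschitz bound; the only differences are cosmetic: the paper phrases that step as nonnegativity of the $Q$-volume of a rectangle with one side on the boundary of $[0,1]^2$ (equivalent to your inequality via the boundary conditions), and it plugs in the minimizer $t^*$ directly rather than optimizing over all $t$.
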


\begin{proof}
As established in Proposition \ref{prop:Bertino copula*}, the function $B_{\Gamma_{\phi_{x_0}}}$ 
	is a copula and therefore a quasi-copula; moreover, $B_{\Gamma_{\phi_{x_0}}}\in \mathbb{C}_{\Gamma_{\phi_{x_0}}}\subset \mathbb{Q}_{\Gamma_{\phi_{x_0}}}$. Now 
	let $Q$ be any (quasi-)copula with the prescribed sub-track section $\Gamma_{\phi_{x_0}}$ along the $x_0$-curve $\phi_{x_0}$. Clearly,  $B_{\Gamma_{\phi_{x_0}}}(u, v)=W(u,v)\le  Q(u, v)$ for every $(u, v)\notin D_{L, x_0}$.  Now consider 
	 $(u,v)\in D_{L, x_0}$ and assume first that $v \ge \phi_{x_0}(u-x_0)$,  
	thus $B_{\Gamma_{\phi_{x_0}}}(u,v)=u-x_0-\hat \Gamma^{I}_{\phi_{x_0}}(t^*)$, where  $$\hat \Gamma^{I}_{\phi_{x_0}}(t^*):=\min\{\phi^{-1}_{x_0}(t)-\Gamma_{\phi_{x_0}}(t)| t\in[\phi_{x_0}(u-x_0),v] \}.$$
	
We recall  \citep[see][Proposition 3]{Genest1999A} that a quasi-copula satisfies the inequality \eqref{qe:V} whenever at least one of the arguments $u_1$, $u_2$, $v_1$ or $v_2$  
equals  either 0 or 1. Applying this inequality for $Q$ to the rectangle $[u, \phi^{-1}_{x_0}(t^*)+x_0]\times[t^*,1]$, it  
 yields that
$$0\le \phi^{-1}_{x_0}(t^*)+x_0-u+Q(u,t^*)-Q\big(\phi^{-1}_{x_0}(t^*)+x_0,t^*\big)=\phi^{-1}_{x_0}(t^*)-(u-x_0)+Q(u,t^*)-\Gamma_{\phi_{x_0}}(t^*),$$
and thus 
$$B_{\Gamma_{\phi_{x_0}}}(u,v)=u-x_0-\hat \Gamma^{I}_{\phi_{x_0}}(t^*)=\Gamma_{\phi_{x_0}}(t^*)-\big(\phi^{-1}_{x_0}(t^*)-(u-x_0)\big)\le Q(u,t^*)\le Q(u,v).$$
	
An analogous argument holds when $v < \phi_{x_0}(u-x_0)$. In this case, $B_{\Gamma_{\phi_{x_0}}}(u,v)=v-\hat \Gamma^{II}_{\phi_{x_0}}(t^\star)$, where  $$\hat \Gamma^{II}_{\phi_{x_0}}(t^\star):=\min\{t-\Gamma_{\phi_{x_0}}(t)|t\in[v, \phi_{x_0}(u-x_0)] \}.$$  
Applying the inequality  \eqref{qe:V} for $Q$ to the rectangle  $[\phi^{-1}_{x_0}(t^\star)+x_0, 1]\times[v,t^\star]$, it  
 yields 
$$0\le t^\star -v+Q\big(\phi^{-1}_{x_0}(t^\star)+x_0,v\big)-Q\big(\phi^{-1}_{x_0}(t^\star)+x_0, t^\star\big)=t^\star -v+Q\big(\phi^{-1}_{x_0}(t^\star)+x_0,v\big)-\Gamma_{\phi_{x_0}}(t^\star),$$
and then 
$$B_{\Gamma_{\phi_{x_0}}}(u,v)=v-\hat \Gamma^{II}_{\phi_{x_0}}(t^\star)=\Gamma_{\phi_{x_0}}(t^\star)-\big(t^\star-v\big)\le Q\big(\phi^{-1}_{x_0}(t^\star)+x_0,v\big)\le Q(u,v).$$
This completes the proof.
\end{proof}

In the following theorem, we establish  the best-possible upper bound for the class  $\mathbb{Q}_{\Gamma_{\phi_{x_0}}}$.

\begin{thm}\label{A}
	For a fixed $x_0\in (0,1)$ and a $\phi_{x_0}$-function $\Gamma_{\phi_{x_0}}$ with $\phi_{x_0}\in\Phi_{x_0}$ satisfying  $\phi_{x_0}(t)\le t$ for all $t\in[0,1-x_0]$, 	the function  $A_{\Gamma_{\phi_{x_0}}}:$ $[0,1]^2\rightarrow$ $[0,1]$ 
given by 
	\begin{equation}\label{eq:G copula1*}A_{\Gamma_{\phi_{x_0}}}(u,v)
	=\left\{\begin{array}{ll}
	\min\{u, v-q^{II}_{\phi_{x_0}}(u,v)\}, & \ {\rm if}\  (u,v)\in T_{U,\phi_{x_0}}, \\ 
	\min\{v, u-x_0-q^{I}_{\phi_{x_0}}(u,v)\},&  \ {\rm if}\  (u,v)\in T_{L,\phi_{x_0}}  
	\end{array}\right.
	\end{equation}	
	is a quasi-copula whose sub-track section along the $x_0$-curve $\phi_{x_0}$ equals   $\Gamma_{\phi_{x_0}}$. Moreover, $Q\le A_{\Gamma_{\phi_{x_0}}}$ for all $Q\in \mathbb{Q}_{\Gamma_{\phi_{x_0}}}$.
\end{thm}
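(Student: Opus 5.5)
The proof has three parts: (a) $A_{\Gamma_{\phi_{x_0}}}$ dominates every $Q\in\mathbb{Q}_{\Gamma_{\phi_{x_0}}}$; (b) $A_{\Gamma_{\phi_{x_0}}}$ has the prescribed sub-track section; (c) $A_{\Gamma_{\phi_{x_0}}}$ is a quasi-copula. Unlike $B_{\Gamma_{\phi_{x_0}}}$, the function $A_{\Gamma_{\phi_{x_0}}}$ is not a $W$-ordinal sum: it is not equal to $W$ outside $D_{L,x_0}$. So Theorem~\ref{prop:G} cannot be used, and properties (Q1)--(Q3) must be checked directly, following the pattern of the proof for the track-section quasi-copula $A_{\Gamma_\phi}$ in \citep{2022Best}.

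\textbf{Upper bound.} Fix $Q\in\mathbb{Q}_{\Gamma_{\phi_{x_0}}}$ and $(u,v)$, and take any $t\in[m_{\phi_{x_0}}(u,v),M_{\phi_{x_0}}(u,v)]$.

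If $(u,v)\in T_{U,\phi_{x_0}}$, then $t\le v$ and $x_0+\phi^{-1}_{x_0}(t)\ge u$. Monotonicity and the Lipschitz condition in the second variable give
\[
Q(u,v)\le Q\big(x_0+\phi^{-1}_{x_0}(t),v\big)\le Q\big(x_0+\phi^{-1}_{x_0}(t),t\big)+v-t=v-\hat\Gamma^{II}_{\phi_{x_0}}(t).
\]
Taking the infimum over $t$, together with $Q\le M$, yields $Q\le \min\{u,v-q^{II}_{\phi_{x_0}}(u,v)\}$.

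If $(u,v)\in T_{L,\phi_{x_0}}$, then $t\ge v$ and $x_0+\phi^{-1}_{x_0}(t)\le u$. In the same way,
\[
Q(u,v)\le Q\big(u,t\big)\le Q\big(x_0+\phi^{-1}_{x_0}(t),t\big)+u-x_0-\phi^{-1}_{x_0}(t)=u-x_0-\hat\Gamma^{I}_{\phi_{x_0}}(t),
\]
and the bound follows after taking the infimum over $t$.

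When $u<x_0$, the truncation $\max\{u-x_0,0\}$ puts the point in $T_{U,\phi_{x_0}}$ with $m_{\phi_{x_0}}=0$, and the same argument applies. When $v>1-x_0$, the truncation $M_{\phi_{x_0}}\le 1-x_0$ keeps $t$ inside the domain of $\Gamma_{\phi_{x_0}}$.

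\textbf{Sub-track section.} On the curve $(u,v)=(x_0+\phi^{-1}_{x_0}(s),s)$ we have $m_{\phi_{x_0}}=M_{\phi_{x_0}}=s$, so $q^{II}_{\phi_{x_0}}=\hat\Gamma^{II}_{\phi_{x_0}}(s)$. Hence
\[
A_{\Gamma_{\phi_{x_0}}}(u,v)=\min\{x_0+\phi^{-1}_{x_0}(s),\Gamma_{\phi_{x_0}}(s)\}=\Gamma_{\phi_{x_0}}(s)
\]
by (S1). This also shows that the upper bound is attained, so it is best-possible once (c) is established.

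\textbf{Quasi-copula property.} For (Q1): $v=0$ puts the point in $T_{U,\phi_{x_0}}$ only when $u\le x_0$, where the value is $\min\{u,0\}=0$; otherwise the point lies in $T_{L,\phi_{x_0}}$ with $m_{\phi_{x_0}}=0$, and the value $\min\{0,\cdot\}=0$. For $u=0$, the point is in $T_{U,\phi_{x_0}}$ and the value is $\min\{0,\cdot\}=0$. For $u=1$ or $v=1$, the interval $[m_{\phi_{x_0}},M_{\phi_{x_0}}]$ reaches $1-x_0$, where $\hat\Gamma=0$. One must also check that the maximum of $\hat\Gamma$ over the interval does not spoil the other term; this uses (D3), e.g.\ $\hat\Gamma^{II}_{\phi_{x_0}}(t)\le 1-x_0-\phi^{-1}_{x_0}(t)$.

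For (Q2) and (Q3), rather than manipulating the piecewise formula, I would use the following characterization. $A_{\Gamma_{\phi_{x_0}}}$ is the pointwise minimum of $M$ and the family of functions
\[
f_t(u,v)=\min\Big\{\Gamma_{\phi_{x_0}}(t)+\big(v-t\big)^+ ,\ \Gamma_{\phi_{x_0}}(t)+\big(u-x_0-\phi^{-1}_{x_0}(t)\big)^+\Big\}\ \text{-type cone functions}, \qquad t\in[0,1-x_0],
\]
more precisely, the smallest $1$-Lipschitz, coordinatewise nondecreasing majorant compatible with the value $\Gamma_{\phi_{x_0}}(t)$ at the point $(x_0+\phi^{-1}_{x_0}(t),t)$. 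Concretely, I would show that
\[
A_{\Gamma_{\phi_{x_0}}}(u,v)=\min\Big\{u,v,\ \inf_{t}\big[\Gamma_{\phi_{x_0}}(t)+(u-x_0-\phi^{-1}_{x_0}(t))^+ +(v-t)^+\big]\Big\}.
\]
Each bracket is nondecreasing and $1$-Lipschitz in the $\ell^1$ sense, and these properties are preserved under infima, so (Q2) and (Q3) follow immediately. Proving this identity is the main obstacle. In $T_{U,\phi_{x_0}}$, the terms with $t\notin[m_{\phi_{x_0}},M_{\phi_{x_0}}]$ must be shown to be dominated by $u$, $v$, or the terms at the endpoints. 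This is done with (S2)/(D2). For $t>v$, the bracket is at least $\Gamma(v)+$(horizontal distance), which is $\ge$ the term at $t=v$ by (S2). For $t<m_{\phi_{x_0}}$, (D2) compares the bracket with the term at $t=m_{\phi_{x_0}}$. The $T_{L,\phi_{x_0}}$ case is symmetric, with the roles of $\hat\Gamma^{I}$ and $\hat\Gamma^{II}$ exchanged.

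The truncations at $u<x_0$ and $v>1-x_0$ need care: there one may also need the constraint $W\le\cdot$, which holds because $\Gamma_{\phi_{x_0}}\ge W$ along the curve. This identity also gives part (a) again, as a consistency check.
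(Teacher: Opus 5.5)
Your proposal is correct. Parts (a) and (b) are essentially the paper's arguments. In $T_{U,\phi_{x_0}}$ you apply monotonicity before the Lipschitz step and the paper does it the other way round, which is immaterial.

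For (Q2)--(Q3) you take a genuinely different route. The paper checks monotonicity and the Lipschitz inequality by a four-case analysis along horizontal and vertical segments lying inside $T_{U,\phi_{x_0}}$ or inside $T_{L,\phi_{x_0}}$. It uses (D2) to bound how much $q^{I}_{\phi_{x_0}}$ and $q^{II}_{\phi_{x_0}}$ can change when the interval $[m_{\phi_{x_0}},M_{\phi_{x_0}}]$ moves. It then reduces segments crossing the curve to these cases via the triangle inequality, tacitly using that the two branches agree on the curve. You instead prove
\[
A_{\Gamma_{\phi_{x_0}}}(u,v)=\min\Big\{u,v,\inf_{t\in[0,1-x_0]}\big[\Gamma_{\phi_{x_0}}(t)+(u-x_0-\phi^{-1}_{x_0}(t))^{+}+(v-t)^{+}\big]\Big\}.
\]
Your sketch of this identity is sound:
\begin{itemize}
\item For $t\in[m_{\phi_{x_0}},M_{\phi_{x_0}}]$ the bracket equals $v-\hat\Gamma^{II}_{\phi_{x_0}}(t)$, respectively $u-x_0-\hat\Gamma^{I}_{\phi_{x_0}}(t)$.
\item For $t>M_{\phi_{x_0}}$ both positive parts vanish. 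The bracket is then $\Gamma_{\phi_{x_0}}(t)\ge\Gamma_{\phi_{x_0}}(M_{\phi_{x_0}})$, which is its value at $t=M_{\phi_{x_0}}$.
\item For $t<m_{\phi_{x_0}}$, (S2) dominates the bracket by its value at $t=m_{\phi_{x_0}}$.
\item The extra term drops out. In $T_{U,\phi_{x_0}}$ this is $v$, because $\hat\Gamma^{II}_{\phi_{x_0}}\ge 0$. In $T_{L,\phi_{x_0}}$ this is $u$, because $v\le\phi_{x_0}(u-x_0)\le u$.
\end{itemize}

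This representation gives you a great deal for free:
\begin{itemize}
\item (Q2) and (Q3), since infima of nondecreasing $1$-Lipschitz cones keep both properties.
\item The gluing across the curve and across the truncation lines $u=x_0$ and $v=1-x_0$.
\item $0\le A_{\Gamma_{\phi_{x_0}}}\le 1$.
\item Even (Q1): at $u=1$ or $v=1$ the required inequality for every $t$ is exactly the lower bound in (S1).
\item Conceptual maximality: $A_{\Gamma_{\phi_{x_0}}}$ is the largest nondecreasing $1$-Lipschitz function below $M$ with the prescribed values on the curve.
\end{itemize}
The paper's route stays with the explicit piecewise formula and mirrors the known $x_0=0$ argument, at the cost of the case analysis.

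A few slips to fix in a write-up:
\begin{itemize}
\item Your first displayed $f_t$, a $\min$ of the two cones, is not a valid upper bound when $u>x_0+\phi^{-1}_{x_0}(t)$ and $v>t$. Replace it by the sum form you give afterwards.
\item The object is the \emph{largest} such function, a McShane-type extension, not the ``smallest majorant''.
\item The constraint $W\le\cdot$ plays no role in the identity itself.
\item In your direct check of (Q1) at $v=0$ in $T_{L,\phi_{x_0}}$ and at $u=0$, you need the other term of the minimum to be nonnegative. This means $q^{I}_{\phi_{x_0}}(u,v)\le u-x_0$ and $q^{II}_{\phi_{x_0}}(u,v)\le v$, which follow from (D3) as in Lemma~\ref{lem24}. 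They are also automatic once the representation is proved.
\end{itemize}
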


The proof of this theorem will be carried out through several auxiliary propositions and lemmas. 
To begin with, we prove that    $A_{\Gamma_{\phi_{x_0}}}$ is bounded within the unit interval.

\begin{lem}\label{lem24}
	 For a fixed $x_0\in(0,1)$ and a $\phi_{x_0}$-function  $\Gamma_{\phi_{x_0}}$, it holds that   $v-q^{II}_{\phi_{x_0}}(u,v)\ge 0$ for all $(u,v)\in T_{U,\phi_{x_0}}$, and  $u-x_0-q^{I}_{\phi_{x_0}}(u,v)\ge 0$ for all $(u,v)\in T_{L,\phi_{x_0}}$. 
	 \end{lem}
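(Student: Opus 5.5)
The plan is to bound $\hat\Gamma^{II}_{\phi_{x_0}}$ and $\hat\Gamma^{I}_{\phi_{x_0}}$ pointwise on the relevant interval $[m_{\phi_{x_0}}(u,v),M_{\phi_{x_0}}(u,v)]$ by the trivial bounds in (D3), $\hat\Gamma^{II}_{\phi_{x_0}}(t)\le t$ and $\hat\Gamma^{I}_{\phi_{x_0}}(t)\le \phi^{-1}_{x_0}(t)$, and then use that the right endpoint of the interval is controlled by $v$, respectively by $\phi_{x_0}(u-x_0)$. First I note that both $\hat\Gamma^{I}_{\phi_{x_0}}$ and $\hat\Gamma^{II}_{\phi_{x_0}}$ are continuous, since $\Gamma_{\phi_{x_0}}$ is Lipschitz-type continuous by (S2) and $\phi^{-1}_{x_0}$ is continuous. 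Hence the maxima defining $q^{I}_{\phi_{x_0}}$ and $q^{II}_{\phi_{x_0}}$ are attained, and it suffices to bound the integrand at every point of the interval.

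\emph{Case $(u,v)\in T_{U,\phi_{x_0}}$.} Here $\phi_{x_0}(\max\{u-x_0,0\})\le v$, so $m_{\phi_{x_0}}(u,v)=\phi_{x_0}(\max\{u-x_0,0\})$ and $M_{\phi_{x_0}}(u,v)=\min\{v,1-x_0\}$. Thus every $t$ in the interval satisfies $t\le v$. By (D3), or directly from $\Gamma_{\phi_{x_0}}\ge 0$ in (S1), we get $\hat\Gamma^{II}_{\phi_{x_0}}(t)=t-\Gamma_{\phi_{x_0}}(t)\le t\le v$. Taking the maximum over $t$ gives $q^{II}_{\phi_{x_0}}(u,v)\le v$.

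\emph{Case $(u,v)\in T_{L,\phi_{x_0}}$.} Here $\phi_{x_0}(\max\{u-x_0,0\})>v\ge 0$, which forces $u>x_0$. Hence $m_{\phi_{x_0}}(u,v)=v$ and $M_{\phi_{x_0}}(u,v)=\phi_{x_0}(u-x_0)$; the truncation at $1-x_0$ is inactive because $u-x_0\le 1-x_0$. For $t\in[v,\phi_{x_0}(u-x_0)]$, the strict monotonicity of $\phi^{-1}_{x_0}$ gives $\phi^{-1}_{x_0}(t)\le u-x_0$. Together with $\Gamma_{\phi_{x_0}}(t)\ge 0$, this yields $\hat\Gamma^{I}_{\phi_{x_0}}(t)\le \phi^{-1}_{x_0}(t)\le u-x_0$. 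Therefore $q^{I}_{\phi_{x_0}}(u,v)\le u-x_0$.

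There is no real obstacle. The only point needing care is the correct identification of $m_{\phi_{x_0}}$ and $M_{\phi_{x_0}}$ in each region, including the truncations at $0$ and $1-x_0$ and the boundary case $u\le x_0$, which can only occur in $T_{U,\phi_{x_0}}$. Note also that the hypothesis $\phi_{x_0}(t)\le t$ is not needed for this lemma.
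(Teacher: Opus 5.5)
Your proposal is correct and matches the paper's argument: the paper also takes a maximizer of $\hat\Gamma^{II}_{\phi_{x_0}}$ (resp.\ $\hat\Gamma^{I}_{\phi_{x_0}}$) on $[m_{\phi_{x_0}}(u,v),M_{\phi_{x_0}}(u,v)]$. It then applies the (D3) bounds $\hat\Gamma^{II}_{\phi_{x_0}}(t)\le t$ and $\hat\Gamma^{I}_{\phi_{x_0}}(t)\le \phi^{-1}_{x_0}(t)$, and concludes from $t\le v$ resp.\ $\phi^{-1}_{x_0}(t)\le u-x_0$. Your remarks on continuity (so the maxima are attained) and on the truncations in $m_{\phi_{x_0}}$ and $M_{\phi_{x_0}}$ just make explicit what the paper leaves implicit.
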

	 
\begin{proof}
For any $(u,v)\in T_{L,\phi_{x_0}}$, define $\hat \Gamma^{I}_{\phi_{x_0}}(t_1^*)=q^I_{\phi_{x_0}}(u,v)$, where $v\le t_1^* \le \phi_{x_0}(u-x_0)$, and for any $(u,v)\in T_{U,\phi_{x_0}}$, define $\hat \Gamma^{II}_{\phi_{x_0}}(t_2^*)=q^{II}_{\phi_{x_0}}(u,v)$, where $\phi_{x_0}(\max\{u-x_0,0\})\le t_2^* \le v$. From the property (D3), we know that for any $(u,v)\in T_{L,\phi_{x_0}}$,  
	$$q^I_{\phi_{x_0}}(u,v)= \hat \Gamma^{I}_{\phi_{x_0}}(t_1^*)\le \phi^{-1}_{x_0}(t_1^*)\le u-x_0,  $$
	and for any $(u,v)\in T_{U,\phi_{x_0}}$,  	$$q^{II}_{\phi_{x_0}}(u,v)= \hat \Gamma^{II}_{\phi_{x_0}}(t_2^*)\le t_2^* \le v.  $$
Using the above two inequalities, we obtain the desired result.
\end{proof}

\begin{lem}\label{lem:1}
For the function  $A_{\Gamma_{\phi_{x_0}}}$  defined by \eqref{eq:G copula1*}, it holds that   for every  $u\in[0,1]$, 
	$$A_{\Gamma_{\phi_{x_0}}}(u,0)=A_{\Gamma_{\phi_{x_0}}}(0,u)=0\ \ {\rm and}\ \ A_{\Gamma_{\phi_{x_0}}}(u,1)=A_{\Gamma_{\phi_{x_0}}}(1,u)=u.$$
\end{lem}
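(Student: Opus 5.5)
The plan is to verify the four boundary identities one at a time. For each boundary point I first determine whether it lies in $T_{U,\phi_{x_0}}$ or in $T_{L,\phi_{x_0}}$. I then compute the interval $[m_{\phi_{x_0}}(u,v),M_{\phi_{x_0}}(u,v)]$ over which $q^{I}_{\phi_{x_0}}$ or $q^{II}_{\phi_{x_0}}$ is maximized. Finally, I bound that maximum using Lemma \ref{lem24} or properties (D1) and (D3).

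\emph{Edges $u=0$ and $v=0$.} For $A_{\Gamma_{\phi_{x_0}}}(0,v)$: since $\phi_{x_0}(\max\{0-x_0,0\})=\phi_{x_0}(0)=0\le v$, the point lies in $T_{U,\phi_{x_0}}$. Then $A_{\Gamma_{\phi_{x_0}}}(0,v)=\min\{0,v-q^{II}_{\phi_{x_0}}(0,v)\}=0$, because $v-q^{II}_{\phi_{x_0}}(0,v)\ge 0$ by Lemma \ref{lem24}.

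For $A_{\Gamma_{\phi_{x_0}}}(u,0)$ I split on $u$. If $u\le x_0$, the point is in $T_{U,\phi_{x_0}}$ and $m_{\phi_{x_0}}=M_{\phi_{x_0}}=0$. Hence $q^{II}_{\phi_{x_0}}(u,0)=\hat\Gamma^{II}_{\phi_{x_0}}(0)=0$ by (D1), and the value is $\min\{u,0\}=0$. If $u>x_0$, the point is in $T_{L,\phi_{x_0}}$, the value is $\min\{0,u-x_0-q^{I}_{\phi_{x_0}}(u,0)\}$, and this equals $0$ again by Lemma \ref{lem24}.

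\emph{Edge $v=1$.} Since $\phi_{x_0}\le 1-x_0<1$, every point $(u,1)$ lies in $T_{U,\phi_{x_0}}$. Here $M_{\phi_{x_0}}(u,1)=1-x_0$ and $m_{\phi_{x_0}}(u,1)=\phi_{x_0}(\max\{u-x_0,0\})$. I must show $1-q^{II}_{\phi_{x_0}}(u,1)\ge u$.

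For every $t$ in the interval, (D3) gives $\hat\Gamma^{II}_{\phi_{x_0}}(t)\le 1-x_0-\phi^{-1}_{x_0}(t)$. Moreover $\phi^{-1}_{x_0}(t)\ge\max\{u-x_0,0\}$ by monotonicity of $\phi^{-1}_{x_0}$. Hence $q^{II}_{\phi_{x_0}}(u,1)\le 1-x_0-\max\{u-x_0,0\}\le 1-u$, and the minimum equals $u$.

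\emph{Edge $u=1$.} Here $\phi_{x_0}(1-x_0)=1-x_0$, so I split on $v$. If $v\ge 1-x_0$, the point is in $T_{U,\phi_{x_0}}$ and $m_{\phi_{x_0}}=M_{\phi_{x_0}}=1-x_0$. Then $q^{II}_{\phi_{x_0}}=\hat\Gamma^{II}_{\phi_{x_0}}(1-x_0)=0$ by (D1), giving $\min\{1,v\}=v$.

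If $v<1-x_0$, the point is in $T_{L,\phi_{x_0}}$ with interval $[v,1-x_0]$. By (D3), $\hat\Gamma^{I}_{\phi_{x_0}}(t)\le 1-x_0-t\le 1-x_0-v$ on that interval. Therefore $1-x_0-q^{I}_{\phi_{x_0}}(1,v)\ge v$, and $A_{\Gamma_{\phi_{x_0}}}(1,v)=v$.

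No step is a real obstacle. The only care needed is the bookkeeping of which region each boundary point falls in, and getting the truncations in $m_{\phi_{x_0}}$ and $M_{\phi_{x_0}}$ right. The most delicate point is the edge $v=1$, where one must combine the upper bound from (D3) with monotonicity of $\phi^{-1}_{x_0}$ to handle both $u\le x_0$ and $u>x_0$ at once.
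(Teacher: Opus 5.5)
Your proof is correct and follows the paper's argument essentially step by step. You use the same region bookkeeping, Lemma \ref{lem24} for the zero edges, and (D1) on the edge $u=1$, $v\ge 1-x_0$. On the edges $u=1$ and $v=1$ you use the (D3) bounds $\hat\Gamma^{I}_{\phi_{x_0}}(t)\le 1-x_0-t$ and $\hat\Gamma^{II}_{\phi_{x_0}}(t)\le 1-x_0-\phi^{-1}_{x_0}(t)$. The only cosmetic difference is on the edge $v=1$: you treat $u<x_0$ and $u\ge x_0$ together via $\phi^{-1}_{x_0}(t)\ge\max\{u-x_0,0\}$, whereas the paper handles $u<x_0$ separately with the cruder bound $q^{II}_{\phi_{x_0}}(u,1)\le 1-x_0$.
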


\begin{proof}
	If $(u,v)\in T_{U,\phi_{x_0}}$ and either $u=0$ or $v=0$, Lemma \ref{lem24} implies  that $A_{\Gamma_{\phi_{x_0}}}(u,0)=A_{\Gamma_{\phi_{x_0}}}(0,v)=0$, since $v-q^{II}_{\phi_{x_0}}(u,v)\ge 0$  for all $(u,v)\in T_{U,\phi_{x_0}}$. We now turn to the case $(u,v)\in T_{L,\phi_{x_0}}$ with $v=0$, which means $u\ge x_0$ and $v=0$. Then, the inequality  $u-x_0-q^{I}_{\phi_{x_0}}(u,v)\ge 0$ implies $A_{\Gamma_{\phi_{x_0}}}(u,0)=0$.
	
	We now consider the boundary cases in which $u=1$ or $v=1$. 
	\begin{itemize}
	\item First, consider the case $u=1$. 
	\begin{itemize}
		\item In the case where $u=1$ and $v\ge 1-x_0$, from the expression \eqref{eq:G copula1*} of $A_{\Gamma_{\phi_{x_0}}}$, we have $A_{\Gamma_{\phi_{x_0}}}(1,v)=\min\{1, v-q^{II}_{\phi_{x_0}}(1,v)\}=v$, since $q^{II}_{\phi_{x_0}}(1,v)=\max\{t-\Gamma_{\phi_{x_0}}(t)|t\in[1-x_0,1-x_0]\}=1-x_0-\Gamma_{\phi_{x_0}}(1-x_0)=0$.
		\item  In the case where $u=1$ and $v < 1-x_0$, from \eqref{eq:G copula1*}, we know that $A_{\Gamma_{\phi_{x_0}}}(1,v)=\min\{v,  1-x_0-q^{I}_{\phi_{x_0}}(1,v)\}$. It follows from the property (D3) that   
		$$ \hat\Gamma^I_{\phi_{x_0}}(t)\le \min\{\phi^{-1}_{x_0}(t),1-x_0-t\}\le 1-x_0-t$$
		for all $t\in[v,1-x_0]$. Therefore,  $ \hat\Gamma^I_{\phi_{x_0}}(t)\le 1-x_0-v$ or, equivalently, $v\le 1-x_0-\hat\Gamma^I_{\phi_{x_0}}(t)$ for all $t\in [v, \phi_{x_0}(1-x_0)]$. 
		Consequently, 
		$$v\le 1-x_0-\max\{\hat\Gamma^I_{\phi_{x_0}}(t)|t\in[v,\phi_{x_0}(1-x_0)]\}=1-x_0-q^{I}_{\phi_{x_0}}(1,v).$$
		Then, for all $v<1-x_0$,  $A_{\Gamma_{\phi_{x_0}}}(1,v)=\min\{v,  1-x_0-q^{I}_{\phi_{x_0}}(1,v)\}=v$.
	\end{itemize}

\item Next consider the case $v=1$. For any $u\in[0,1]$, we have $(u,1)\in T_{U,\phi_{x_0}}$, and then $$A_{\Gamma_{\phi_{x_0}}}(u,1)=\min\{u, 1-q^{II}_{\phi_{x_0}}(u,1)\}.$$
\begin{itemize}
	\item Assume first that $u\ge x_0$. From (D3), it follows  that $ \hat\Gamma^{II}_{\phi_{x_0}}(t)\le \min\{t,1-x_0-\phi^{-1}_{x_0}(t)\}\le 1-x_0-\phi^{-1}_{x_0}(t)$
	for each $t\in[0, 1-x_0]$. Therefore, for all $\phi_{x_0}(u-x_0)\le t\le 1-x_0 $, 
	$$\hat\Gamma^{II}_{\phi_{x_0}}(t)\le 1-x_0-\phi^{-1}_{x_0}(t)\le 1-x_0-(u-x_0)=1-u, $$
	which is equivalent to $u\le 1- \hat\Gamma^{II}_{\phi_{x_0}}(t)$ for all $t \in[\phi_{x_0}(u-x_0), 1-x_0]$. Consequently, we have
	$$u\le 1-\max\{\hat\Gamma^{II}_{\phi_{x_0}}(t)|t\in[\phi_{x_0}(u-x_0),1-x_0]\}=1-q^{II}_{\phi_{x_0}}(u,1),$$
	which implies $A_{\Gamma_{\phi_{x_0}}}(u,1)=u$.
	\item For $u< x_0$, we have $m_{\phi_{x_0}}(u,1)=0$ and
	$$1-q^{II}_{\phi_{x_0}}(u,1)=1-\max\{\hat\Gamma^{II}_{\phi_{x_0}}(t)|t\in[0,1-x_0]\}\ \ \ \ \ \ \ \ \ \ \ \ \ \ \ \ \ \ \ \ \ \ \ \ \ \ \ \ \ \ \ \ \ \ \ \ \ \ \ \ \ \ \ \ \ \ \ \ \ \ \ \ \ \ \ \ \ \ \ \    $$
	$$\ge 1-\max\{t|t\in[0,1-x_0]\}+\min\{\Gamma_{\phi_{x_0}}(t)|t\in[0,1-x_0]\}$$
	$$=1-(1-x_0)+\Gamma_{\phi_{x_0}}(0)=x_0> u.    \ \ \ \ \ \ \ \ \ \ \ \ \ \ \ \ \ \ \ \ \ \ \ \ \ \ \ \ \ \ \ \ \ \ \ \ \    $$
	Therefore,   $A_{\Gamma_{\phi_{x_0}}}(u,1)=u$ also holds in this case.
\end{itemize}
\end{itemize}
\end{proof}

\begin{prop}
	The function  $A_{\Gamma_{\phi_{x_0}}}$ defined by \eqref{eq:G copula1*} is nondecreasing in each argument
	and satisfies the Lipschitz condition \eqref{Lp}.
\end{prop}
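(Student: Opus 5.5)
We follow the approach used for the $\Gamma_\phi$-curvilinear quasi-copula of \citet{2022Best}, splitting the unit square into $T_{U,\phi_{x_0}}$ and $T_{L,\phi_{x_0}}$. First we treat each region separately, and then we glue the two pieces along the $x_0$-curve and along the segment $\{u\le x_0\}$. By the triangle inequality, the Lipschitz condition \eqref{Lp} reduces to two one-dimensional statements: for fixed $v$, $0\le A_{\Gamma_{\phi_{x_0}}}(u_2,v)-A_{\Gamma_{\phi_{x_0}}}(u_1,v)\le u_2-u_1$ for $u_1\le u_2$, and the analogous statement in $v$. The nondecreasing property is the left inequality, so both claims are proved together. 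The boundary values from Lemma \ref{lem:1} and the nonnegativity from Lemma \ref{lem24} are used throughout.

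\emph{Step 1 (within $T_{U,\phi_{x_0}}$).} Here $A=\min\{u,v-q^{II}_{\phi_{x_0}}(u,v)\}$, and $q^{II}$ is the maximum of $\hat\Gamma^{II}_{\phi_{x_0}}$ over $[\phi_{x_0}(\max\{u-x_0,0\}),v]$.

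For fixed $v$, increasing $u$ shrinks the interval from the left, so $q^{II}$ does not increase. Hence $v-q^{II}$ does not decrease and $A$ is nondecreasing in $u$. For the Lipschitz bound in $u$, the term $u$ has slope $1$. It suffices to show that $v-q^{II}$ grows by at most $u_2-u_1$. The only change in the maximum comes from dropping $t\in[\phi_{x_0}(u_1-x_0),\phi_{x_0}(u_2-x_0)]$. By (D2), $\hat\Gamma^{II}$ can decrease by at most $\phi^{-1}_{x_0}(t_2)-\phi^{-1}_{x_0}(t_1)$ between such points, which is at most $u_2-u_1$. So the maximum drops by at most $u_2-u_1$, by the standard argument comparing the maximizer with the new left endpoint.

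For fixed $u$, increasing $v$ enlarges the interval on the right. By (D2), $\hat\Gamma^{II}(t_2)-\hat\Gamma^{II}(t_1)\le t_2-t_1$, so $q^{II}$ increases by at most $v_2-v_1$. Therefore $v-q^{II}$ is nondecreasing. Since $q^{II}$ itself does not decrease, $v-q^{II}$ increases by at most $v_2-v_1$.

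\emph{Step 2 (within $T_{L,\phi_{x_0}}$, where $u>x_0$).} Here $A=\min\{v,u-x_0-q^{I}_{\phi_{x_0}}(u,v)\}$, and $q^I$ is the maximum of $\hat\Gamma^I$ over $[v,\phi_{x_0}(u-x_0)]$. The roles are mirrored: (D2) for $\hat\Gamma^{I}$ gives an increment bound by $\phi^{-1}_{x_0}(t_2)-\phi^{-1}_{x_0}(t_1)$ when the right endpoint moves with $u$, and a decrement bound by $t_2-t_1$ when the left endpoint $v$ moves. This yields exactly the same four monotonicity and $1$-Lipschitz estimates as in Step 1.

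\emph{Step 3 (gluing).} On the curve $v=\phi_{x_0}(u-x_0)$, both intervals collapse to the single point $\{v\}$. Both formulas then equal $\min\{u,v-\hat\Gamma^{II}(v)\}$ and $\min\{v,u-x_0-\hat\Gamma^{I}(v)\}$, that is, $\min\{u,\Gamma_{\phi_{x_0}}(v)\}$ and $\min\{v,\Gamma_{\phi_{x_0}}(v)\}$. Both reduce to $\Gamma_{\phi_{x_0}}(v)$ by (S1) and $\phi_{x_0}(t)\le t$, so the two formulas agree and $A$ is continuous across the curve. This same computation will later give the sub-track section claim of Theorem \ref{A}.

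A horizontal or vertical segment meets the curve in at most one point, because $\phi_{x_0}$ is strictly increasing. Its pieces on either side therefore satisfy the estimates of Steps 1 and 2, and continuity lets the increments add. For $u\le x_0$ the first argument of $\phi_{x_0}$ is truncated at $0$, so $q^{II}$ is constant in $u$ there. The estimates of Step 1 still hold trivially, and the domain remains $T_{U,\phi_{x_0}}$.

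\emph{Main obstacle.} We expect the main difficulty to be Step 1's $u$-Lipschitz bound, and its mirror in Step 2, when the minimum switches between its two arguments. The fix is to prove the $1$-Lipschitz and monotonicity properties for each argument of the minimum separately, since a minimum of nondecreasing $1$-Lipschitz functions is again nondecreasing and $1$-Lipschitz. Getting the endpoint-shift bound right for a maximum over a moving interval requires care. If $t^*$ maximizes over the old interval and is dropped, we compare $\hat\Gamma(t^*)$ with $\hat\Gamma$ at the new endpoint using (D2).
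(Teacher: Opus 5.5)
Your proposal is correct and follows essentially the same route as the paper. Like the paper, you treat the four one-variable cases (varying $u$ or $v$ within $T_{U,\phi_{x_0}}$ and within $T_{L,\phi_{x_0}}$) using (D2), comparing the maximizer of $\hat\Gamma^{I}_{\phi_{x_0}}$ or $\hat\Gamma^{II}_{\phi_{x_0}}$ with the new endpoint of the moving interval, and then combine via the triangle inequality. Your Step 3 adds one thing the paper leaves implicit under ``basic summation'': it checks that both formulas reduce to $\Gamma_{\phi_{x_0}}(v)$ on the $x_0$-curve, which is what lets increments be added across it.
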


\begin{proof}
	In order to show the nondecreasing property and the Lipschitz condition for  $A_{\Gamma_{\phi_{x_0}}}$, 
	it suffices to verify that  $A_{\Gamma_{\phi_{x_0}}}$ 
	is nondecreasing and satisfies 
	Lipschitz condition separately on $T_{U,\phi_{x_0}}$ and $T_{L,\phi_{x_0}}$. The proofs for other cases can be reduced to the two  situations considered above, using either the triangle inequality or basic summation. Accordingly, we distinguish four cases and, in each, prove the required 
	properties. 
	
	\noindent \textit{Case} 1: Consider the case that both $(u_1, v)$ and $(u_2, v)$ are in $T_{U,\phi_{x_0}}$ with $u_1\le  u_2$. Then,  $\phi_{x_0}(\max\{u_1-x_0,0\})\le \phi_{x_0}(\max\{u_2-x_0,0\})\le v$, and consequently $m_{\phi_{x_0}}(u_1,v)\le m_{\phi_{x_0}}(u_2,v)$ while  $M_{\phi_{x_0}}(u_1,v)= M_{\phi_{x_0}}(u_2,v)$. It implies 
$q^{II}_{\phi_{x_0}}(u_1,v)\ge q^{II}_{\phi_{x_0}}(u_2,v)$. Recall that for every   $(u, v)\in T_{U,\phi_{x_0}}$,
$A_{\Gamma_{\phi_{x_0}}}(u,v)=\min\{u, v-q^{II}_{\phi_{x_0}}(u,v)\}$. 

\begin{itemize}
	\item We first show   that, for each fixed $v$, the mapping $u \mapsto A_{\Gamma_{\phi_{x_0}}}(u,v)$ is nondecreasing. If $A_{\Gamma_{\phi_{x_0}}}(u_2,v)=u_2$, then
	$$A_{\Gamma_{\phi_{x_0}}}(u_2,v)-A_{\Gamma_{\phi_{x_0}}}(u_1,v)=u_2-\min\{u_1, v-q^{II}_{\phi_{x_0}}(u_1,v)\}\ge u_2-u_1\ge 0.$$
	If $A_{\Gamma_{\phi_{x_0}}}(u_2,v)=v-q^{II}_{\phi_{x_0}}(u_2,v)$, then
	$$A_{\Gamma_{\phi_{x_0}}}(u_2,v)-A_{\Gamma_{\phi_{x_0}}}(u_1,v)=v-q^{II}_{\phi_{x_0}}(u_2,v)-\min\{u_1, v-q^{II}_{\phi_{x_0}}(u_1,v)\}\ge v-q^{II}_{\phi_{x_0}}(u_2,v)-\big( v-q^{II}_{\phi_{x_0}}(u_1,v)\big)\ge 0.$$
	\item We now turn to the Lipschitz condition. If $A_{\Gamma_{\phi_{x_0}}}(u_1,v)=u_1$, then
	$$A_{\Gamma_{\phi_{x_0}}}(u_2,v)-A_{\Gamma_{\phi_{x_0}}}(u_1,v)=\min\{u_2, v-q^{II}_{\phi_{x_0}}(u_2,v)\}-u_1\le u_2-u_1.$$
	If $A_{\Gamma_{\phi_{x_0}}}(u_1,v)=v-q^{II}_{\phi_{x_0}}(u_1,v)$, then
	$$A_{\Gamma_{\phi_{x_0}}}(u_2,v)-A_{\Gamma_{\phi_{x_0}}}(u_1,v)\le  v-q^{II}_{\phi_{x_0}}(u_2,v)-\big(v-q^{II}_{\phi_{x_0}}(u_1,v)\big)\le q^{II}_{\phi_{x_0}}(u_1,v)-q^{II}_{\phi_{x_0}}(u_2,v).$$
	It therefore remains to verify the inequality $q^{II}_{\phi_{x_0}}(u_1,v)-q^{II}_{\phi_{x_0}}(u_2,v)\le u_2-u_1$.
	
	If $q^{II}_{\phi_{x_0}}(u_1,v)=q^{II}_{\phi_{x_0}}(u_2,v)$, the claim is trivial. Then, suppose that $q^{II}_{\phi_{x_0}}(u_1,v)>q^{II}_{\phi_{x_0}}(u_2,v)$.  It implies that $m_{\phi_{x_0}}(u_2,v)>m_{\phi_{x_0}}(u_1,v)\ge 0$, $m_{\phi_{x_0}}(u_1,v)=\phi_{x_0}(\max\{u_1-x_0,0\})$ and 
	$m_{\phi_{x_0}}(u_2,v)=\phi_{x_0}(u_2-x_0)$. Consequently, there exist $t'$ and $t''$ satisfying  
	$$m_{\phi_{x_0}}(u_1,v)\le t'<m_{\phi_{x_0}}(u_2,v)\le t''\le M_{\phi_{x_0}}(u_2,v)  $$
	such that $q^{II}_{\phi_{x_0}}(u_1,v)=\hat\Gamma^{II}_{\phi_{x_0}}(t')$ and $q^{II}_{\phi_{x_0}}(u_2,v)=\hat\Gamma^{II}_{\phi_{x_0}}(t'')$. Since $\hat\Gamma^{II}_{\phi_{x_0}}(t'')$ is the maximum of $\hat\Gamma^{II}_{\phi_{x_0}}$ 
	on the interval $[m_{\phi_{x_0}}(u_2,v),$
	$ M_{\phi_{x_0}}(u_2,v)]$, we have  $\hat\Gamma^{II}_{\phi_{x_0}}(t'')\ge \hat\Gamma^{II}_{\phi_{x_0}}(t)$ for every $t$ in that interval, and 
	in particular for $t =m_{\phi_{x_0}}(u_2,v)= \phi_{x_0}(u_2-x_0)$. Applying the property (D2) of $\hat\Gamma^{II}_{\phi_{x_0}}$, we obtain 
	$$q^{II}_{\phi_{x_0}}(u_1,v)-q^{II}_{\phi_{x_0}}(u_2,v)=\hat\Gamma^{II}_{\phi_{x_0}}(t')-\hat\Gamma^{II}_{\phi_{x_0}}(t'')\le \hat\Gamma^{II}_{\phi_{x_0}}(t')-\hat\Gamma^{II}_{\phi_{x_0}}\big(\phi_{x_0}(u_2-x_0)\big)\le u_2-x_0-\phi^{-1}_{x_0}(t') $$
	$$\le u_2-x_0-\max\{u_1-x_0,0\}\le u_2-u_1,\ \ \ \ \ \ \ \ \ \ \ \ \ \ \ \   $$
	which establishes the claim in this case. 
	
\end{itemize}

 \noindent \textit{Case} 2:   Let both $(u, v_1)$ and $(u, v_2)$ be in $T_{U,\phi_{x_0}}$ with $v_1\le  v_2$, meaning that $\phi_{x_0}(\max\{u-x_0,0\})\le v_1\le v_2$. In this case, we know that 
 $$m_{\phi_{x_0}}(u,v_1)=m_{\phi_{x_0}}(u,v_2)=\phi_{x_0}(\max\{u-x_0,0\})$$
 and 
 $$M_{\phi_{x_0}}(u,v_1)=v_1\le v_2=M_{\phi_{x_0}}(u,v_2).$$
Thus, 
 $q^{II}_{\phi_{x_0}}(u,v_1)\le q^{II}_{\phi_{x_0}}(u,v_2)$. Recall that for any $(u, v)\in T_{U,\phi_{x_0}}$,  $A_{\Gamma_{\phi_{x_0}}}(u,v)=\min\{u, v-q^{II}_{\phi_{x_0}}(u,v)\}$. 
 
 \begin{itemize}
 	\item To verify the nondecreasing property of $A_{\Gamma_{\phi_{x_0}}}$,  assume first that  $A_{\Gamma_{\phi_{x_0}}}(u,v_2)=u$.  Then
 	$$A_{\Gamma_{\phi_{x_0}}}(u,v_2)-A_{\Gamma_{\phi_{x_0}}}(u,v_1)=u-\min\{u, v_1-q^{II}_{\phi_{x_0}}(u,v_1)\}\ge u-u= 0.$$
 	If $A_{\Gamma_{\phi_{x_0}}}(u,v_2)=v_2-q^{II}_{\phi_{x_0}}(u,v_2)$, then
 	$$A_{\Gamma_{\phi_{x_0}}}(u,v_2)-A_{\Gamma_{\phi_{x_0}}}(u,v_1)\ge v_2-q^{II}_{\phi_{x_0}}(u,v_2)-\big(v_1-q^{II}_{\phi_{x_0}}(u,v_1)\big)=v_2-v_1-\big(q^{II}_{\phi_{x_0}}(u,v_2)-q^{II}_{\phi_{x_0}}(u,v_1)\big) .$$ 
 	It therefore remains to verify the inequality  $v_2-v_1\ge q^{II}_{\phi_{x_0}}(u,v_2)-q^{II}_{\phi_{x_0}}(u,v_1)$. This inequality is obviously satisfied when
 	$q^{II}_{\phi_{x_0}}(u,v_2)=q^{II}_{\phi_{x_0}}(u,v_1)$.  In the case where  $q^{II}_{\phi_{x_0}}(u,v_2)>q^{II}_{\phi_{x_0}}(u,v_1)$,  we have $M_{\phi_{x_0}}(u,v_1) =v_1 < 1-x_0$, and there
 	exist $t'$ and $t''$ satisfying  $m_{\phi_{x_0}}(u,v_1)\le t'\le v_1<t''\le M_{\phi_{x_0}}(u,v_2)$ such that 
 	$$q^{II}_{\phi_{x_0}}(u,v_2)=\hat\Gamma^{II}_{\phi_{x_0}}(t'')>\hat\Gamma^{II}_{\phi_{x_0}}(t')=q^{II}_{\phi_{x_0}}(u,v_1).$$
 	Then, from the property (D2) of  $\hat\Gamma^{II}_{\phi_{x_0}}$
 	and the fact that $\hat\Gamma^{II}_{\phi_{x_0}}(t')$ is the maximum of $\hat\Gamma^{II}_{\phi_{x_0}}$ 
 	on the interval $[m_{\phi_{x_0}}(u,v_1), M_{\phi_{x_0}}(u,v_1)]$, we have
 	$$0\le q^{II}_{\phi_{x_0}}(u,v_2)-q^{II}_{\phi_{x_0}}(u,v_1)=\hat\Gamma^{II}_{\phi_{x_0}}(t'')-\hat\Gamma^{II}_{\phi_{x_0}}(t')\le \hat\Gamma^{II}_{\phi_{x_0}}(t'')-\hat\Gamma^{II}_{\phi_{x_0}}(v_1)$$
 	$$ \ \ \ \ \ \ \ \ \ \ \ \ \ \ \ \ \ \ \ \ \ \ \  \ \ \ \ \ \ \ \ \ \ \ \ \ \ \ \ \ \ \ \ \ \ \ \le t''-v_1\le\min\{v_2,1-x_0\}-v_1\le v_2-v_1.   $$	
 \item	We now turn to the Lipschitz condition of $A_{\Gamma_{\phi_{x_0}}}$. If $A_{\Gamma_{\phi_{x_0}}}(u,v_1)=u$, then  
 	$$A_{\Gamma_{\phi_{x_0}}}(u,v_2)-A_{\Gamma_{\phi_{x_0}}}(u,v_1)=\min\{u, v_2-q^{II}_{\phi_{x_0}}(u,v_2)\}-u\le u-u=0\le v_2-v_1.$$ 
 	In the case where  $A_{\Gamma_{\phi_{x_0}}}(u,v_1)=v_1-q^{II}_{\phi_{x_0}}(u,v_1)$, it follows that 
 	$$A_{\Gamma_{\phi_{x_0}}}(u,v_2)-A_{\Gamma_{\phi_{x_0}}}(u,v_1)\le v_2-q^{II}_{\phi_{x_0}}(u,v_2)-\big(v_1-q^{II}_{\phi_{x_0}}(u,v_1)\big)=v_2-v_1-\big(q^{II}_{\phi_{x_0}}(u,v_2)-q^{II}_{\phi_{x_0}}(u,v_1)\big)$$
 	$$\le v_2-v_1,\ \ \ \ \ \ \ \ \ \ \ \ \ \ \ \ \ \ \ \ \ \ \ \ \ \ \ \ \ \ \ \ \ \ \ \ \ \ \ \ \ \ \ \ \ \ \ \ \ \ \ \ \ \ \ \ \ \ \ \ \  \   $$ 
 	since $q^{II}_{\phi_{x_0}}(u,v_2)-q^{II}_{\phi_{x_0}}(u,v_1)\ge 0$. 
 \end{itemize}

\noindent \textit{Case} 3:  Consider the case that both $(u_1, v)$ and $(u_2, v)$ are in $T_{L,\phi_{x_0}}$ with $x_0\le u_1\le  u_2\le 1$, i.e., $v\le \phi_{x_0}(u_1-x_0) \le \phi_{x_0}(u_2-x_0)$. In this case, it holds that 
$$m_{\phi_{x_0}}(u_1,v)=m_{\phi_{x_0}}(u_2,v)=v$$
and 
$$M_{\phi_{x_0}}(u_1,v)=\phi_{x_0}(u_1-x_0) \le \phi_{x_0}(u_2-x_0)=M_{\phi_{x_0}}(u_2,v).$$
Thus,  
$q^{I}_{\phi_{x_0}}(u_1,v)\le q^{I}_{\phi_{x_0}}(u_2,v)$. Note that for any $(u, v)\in T_{L,\phi_{x_0}}$, $A_{\Gamma_{\phi_{x_0}}}(u,v)=\min\{v, u-x_0-q^{I}_{\phi_{x_0}}(u,v)\}$. 

\begin{itemize}
	\item For the nondecreasing property of $A_{\Gamma_{\phi_{x_0}}}$, if $A_{\Gamma_{\phi_{x_0}}}(u_2,v)=v$,  then
	$$A_{\Gamma_{\phi_{x_0}}}(u_2,v)-A_{\Gamma_{\phi_{x_0}}}(u_1,v)\ge v-v= 0.$$
	If $A_{\Gamma_{\phi_{x_0}}}(u_2,v)=u_2-x_0-q^{I}_{\phi_{x_0}}(u_2,v)$, then
	$$A_{\Gamma_{\phi_{x_0}}}(u_2,v)-A_{\Gamma_{\phi_{x_0}}}(u_1,v)\ge u_2-u_1-\big(q^{I}_{\phi_{x_0}}(u_2,v)-q^{I}_{\phi_{x_0}}(u_1,v)\big).$$ 
	If $q^{I}_{\phi_{x_0}}(u_2,v)=q^{I}_{\phi_{x_0}}(u_1,v)$, it follows immediately that $A_{\Gamma_{\phi_{x_0}}}(u_2,v)-A_{\Gamma_{\phi_{x_0}}}(u_1,v)\ge 0$. In the case where  $q^{I}_{\phi_{x_0}}(u_2,v)>q^{I}_{\phi_{x_0}}(u_1,v)$, 
	there necessarily exist $t'$ and $t''$ satisfying  
	$$v\le t'\le \phi_{x_0}(u_1-x_0)< t''\le \phi_{x_0}(u_2-x_0)  $$
	such that
	$q^{I}_{\phi_{x_0}}(u_2,v)=\hat\Gamma^{I}_{\phi_{x_0}}(t'')$ and $q^{I}_{\phi_{x_0}}(u_1,v)=\hat\Gamma^{I}_{\phi_{x_0}}(t')$. Applying an argument analogous to those used in the preceding cases, we have that
	$$q^{I}_{\phi_{x_0}}(u_2,v)-q^{I}_{\phi_{x_0}}(u_1,v)=\hat\Gamma^{I}_{\phi_{x_0}}(t'')-\hat\Gamma^{I}_{\phi_{x_0}}(t')\le \hat\Gamma^{I}_{\phi_{x_0}}(t'')-\hat\Gamma^{I}_{\phi_{x_0}}(\phi_{x_0}(u_1-x_0))$$
	$$\ \ \ \ \ \ \ \ \ \ \ \ \ \ \ \ \ \ \ \ \ \ \ \ \ \ \ \ \ \ \ \ \ \ \ \ \ \ \ \ \  \le \phi^{-1}_{x_0}(t'')- (u_1-x_0) \le u_2-x_0-(u_1-x_0)=u_2-u_1,$$
	where the second inequality follows from the property (D2) of $\hat\Gamma^{I}_{\phi_{x_0}}$. 
	Thus, $A_{\Gamma_{\phi_{x_0}}}$ 
	is nondecreasing in its first argument on $T_{L,\phi_{x_0}}$.
\item 	
	For the Lipschitz condition, if $A_{\Gamma_{\phi_{x_0}}}(u_1,v)=v$, then
	$$A_{\Gamma_{\phi_{x_0}}}(u_2,v)-A_{\Gamma_{\phi_{x_0}}}(u_1,v)\le v-v=0\le u_2-u_1.$$ 
	If $A_{\Gamma_{\phi_{x_0}}}(u_1,v)=u_1-x_0-q^{I}_{\phi_{x_0}}(u_1,v)$, then
	$$A_{\Gamma_{\phi_{x_0}}}(u_2,v)-A_{\Gamma_{\phi_{x_0}}}(u_1,v)\le u_2-u_1-\big(q^{I}_{\phi_{x_0}}(u_2,v)-q^{I}_{\phi_{x_0}}(u_1,v)\big)\le u_2-u_1,$$ 
	since $q^{I}_{\phi_{x_0}}(u_1,v)\le q^{I}_{\phi_{x_0}}(u_2,v)$. 
\end{itemize}

\noindent \textit{Case} 4: Finally, consider the case that  both $(u, v_1)$ and $(u, v_2)$ belong to  $T_{L,\phi_{x_0}}$ with $v_1\le  v_2$, i.e., $v_1\le v_2\le \phi_{x_0}(u-x_0)$. In this case, we know that 
$$m_{\phi_{x_0}}(u,v_1)= v_1\le  v_2=m_{\phi_{x_0}}(u,v_2)$$
and 
$$M_{\phi_{x_0}}(u,v_1)=\phi_{x_0}(u-x_0)=M_{\phi_{x_0}}(u,v_2).$$
Thus, 
$q^{I}_{\phi_{x_0}}(u,v_1)\ge q^{I}_{\phi_{x_0}}(u,v_2)$. If  $q^{I}_{\phi_{x_0}}(u,v_1)> q^{I}_{\phi_{x_0}}(u,v_2)$, then there
exist $t'$ and $t''$ satisfying $v_1\le t'< v_2\le t''\le \phi_{x_0}(u-x_0)$ such that 
$q^{I}_{\phi_{x_0}}(u,v_1)=\hat\Gamma^{I}_{\phi_{x_0}}(t')$ and $q^{I}_{\phi_{x_0}}(u,v_2)=\hat\Gamma^{I}_{\phi_{x_0}}(t'')$. Applying an argument similar to those used in the preceding cases yields 
$$0\le q^{I}_{\phi_{x_0}}(u,v_1)-q^{I}_{\phi_{x_0}}(u,v_2)=\hat\Gamma^{I}_{\phi_{x_0}}(t')-\hat\Gamma^{I}_{\phi_{x_0}}(t'')\le \hat\Gamma^{I}_{\phi_{x_0}}(t')-\hat\Gamma^{I}_{\phi_{x_0}}(v_2)\le v_2- t'\le v_2- v_1,$$
where the third inequality follows from the property (D2) of $\hat\Gamma^{I}_{\phi_{x_0}}$. 
Recall that  $A_{\Gamma_{\phi_{x_0}}}(u,v)=\min\{v, u-x_0-q^{I}_{\phi_{x_0}}(u,v)\}$ for all $(u,v)\in T_{L,\phi_{x_0}}$. 

\begin{itemize}
	\item For the nondecreasing property, if $A_{\Gamma_{\phi_{x_0}}}(u,v_2)=v_2$, then $A_{\Gamma_{\phi_{x_0}}}(u,v_2)-A_{\Gamma_{\phi_{x_0}}}(u,v_1)\ge v_2-v_1\ge 0$. Otherwise, $A_{\Gamma_{\phi_{x_0}}}(u,v_2)-A_{\Gamma_{\phi_{x_0}}}(u,v_1)\ge q^{I}_{\phi_{x_0}}(u,v_1)-q^{I}_{\phi_{x_0}}(u,v_2)\ge 0$. 
	\item For the Lipschitz condition, if $A_{\Gamma_{\phi_{x_0}}}(u,v_1)=u-x_0-q^{I}_{\phi_{x_0}}(u,v_1)$, then  
	$$A_{\Gamma_{\phi_{x_0}}}(u,v_2)-A_{\Gamma_{\phi_{x_0}}}(u,v_1)\le q^{I}_{\phi_{x_0}}(u,v_1)-q^{I}_{\phi_{x_0}}(u,v_2)\le v_2-v_1.$$ 
	Otherwise, the inequality  $A_{\Gamma_{\phi_{x_0}}}(u,v_2)-A_{\Gamma_{\phi_{x_0}}}(u,v_1)\le v_2-v_1$  follows immediately.
\end{itemize}

In summary, in every case considered, we have verified that $A_{\Gamma_{\phi_{x_0}}}$ 
is nondecreasing and satisfies the Lipschitz condition. \end{proof}

\begin{lem}
The sub-track section  of the quasi-copula  $A_{\Gamma_{\phi_{x_0}}}$ defined by \eqref{eq:G copula1*} along the $x_0$-curve $\phi_{x_0}$ coincides with $\Gamma_{\phi_{x_0}}$, that is, $A_{\Gamma_{\phi_{x_0}}}\in \mathbb{Q}_{\Gamma_{\phi_{x_0}}}$.
\end{lem}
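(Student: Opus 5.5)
We need $A_{\Gamma_{\phi_{x_0}}}\big(x_0+\phi^{-1}_{x_0}(t),t\big)=\Gamma_{\phi_{x_0}}(t)$ for every $t\in[0,1-x_0]$. The plan is a direct evaluation of \eqref{eq:G copula1*} at the points of the $x_0$-curve.

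First fix $t\in[0,1-x_0]$ and set $u=x_0+\phi^{-1}_{x_0}(t)\ge x_0$ and $v=t$. Then $\phi_{x_0}(\max\{u-x_0,0\})=\phi_{x_0}(\phi^{-1}_{x_0}(t))=t=v$, so $(u,v)\in T_{U,\phi_{x_0}}$ and we are in the first branch of \eqref{eq:G copula1*}. Moreover $m_{\phi_{x_0}}(u,v)=t$ and $M_{\phi_{x_0}}(u,v)=\min\{t,1-x_0\}=t$. The interval over which the maximum in $q^{II}_{\phi_{x_0}}$ is taken therefore degenerates to the single point $\{t\}$, and
$$q^{II}_{\phi_{x_0}}(u,v)=\hat\Gamma^{II}_{\phi_{x_0}}(t)=t-\Gamma_{\phi_{x_0}}(t).$$
Consequently
$$A_{\Gamma_{\phi_{x_0}}}\big(x_0+\phi^{-1}_{x_0}(t),t\big)=\min\big\{x_0+\phi^{-1}_{x_0}(t),\ \Gamma_{\phi_{x_0}}(t)\big\}.$$

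Next we remove the minimum using the upper bound in Condition (S1). It gives $\Gamma_{\phi_{x_0}}(t)\le\min\{x_0+\phi^{-1}_{x_0}(t),t\}\le x_0+\phi^{-1}_{x_0}(t)$, so the minimum equals $\Gamma_{\phi_{x_0}}(t)$. Only (S1) is used here; the hypothesis $\phi_{x_0}(t)\le t$ is not needed for this step.

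Finally, combine this identity with Lemma \ref{lem:1} (boundary conditions) and the preceding Proposition (monotonicity and the Lipschitz condition). Together they show that $A_{\Gamma_{\phi_{x_0}}}$ is a quasi-copula whose sub-track section along $\phi_{x_0}$ is $\Gamma_{\phi_{x_0}}$, i.e. $A_{\Gamma_{\phi_{x_0}}}\in\mathbb{Q}_{\Gamma_{\phi_{x_0}}}$.

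No step is a genuine obstacle. The only point needing care is confirming that the curve points lie in $T_{U,\phi_{x_0}}$ rather than $T_{L,\phi_{x_0}}$, which holds because the defining inequality of $T_{U,\phi_{x_0}}$ is non-strict, and that the truncation at $1-x_0$ in $M_{\phi_{x_0}}$ is inactive there, since $t\le 1-x_0$.
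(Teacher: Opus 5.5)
Your proof is correct and is essentially the paper's own argument: at $\big(x_0+\phi^{-1}_{x_0}(t),t\big)$ one has $m_{\phi_{x_0}}=M_{\phi_{x_0}}=t$, so $q^{II}_{\phi_{x_0}}$ reduces to $t-\Gamma_{\phi_{x_0}}(t)$, and the minimum then collapses to $\Gamma_{\phi_{x_0}}(t)$ by (S1). Your explicit checks that the curve points lie in $T_{U,\phi_{x_0}}$ and that the truncation in $M_{\phi_{x_0}}$ is inactive, together with citing Lemma \ref{lem:1} and the preceding Proposition for the quasi-copula property, simply make explicit what the paper leaves implicit.
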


\begin{proof} For any $t\in[0,1-x_0]$, we have
	$$m_{\phi_{x_0}}\big(x_0+\phi^{-1}_{x_0}(t),t\big)=t=M_{\phi_{x_0}}\big(x_0+\phi^{-1}_{x_0}(t),t\big).$$
Consequently, 	
	$$q^{II}_{\phi_{x_0}}\big(x_0+\phi^{-1}_{x_0}(t),t\big)=t-\Gamma_{\phi_{x_0}}(t),$$
and thus
	$$A_{\Gamma_{\phi_{x_0}}}\big(x_0+\phi^{-1}_{x_0}(t),t\big)=\min\{x_0+\phi^{-1}_{x_0}(t), t-q^{II}_{\phi_{x_0}}\big(x_0+\phi^{-1}_{x_0}(t),t\big)\}=\min\{x_0+\phi^{-1}_{x_0}(t),\Gamma_{\phi_{x_0}}(t)\}=\Gamma_{\phi_{x_0}}(t),$$
which completes the proof.
\end{proof}

\begin{prop}
	For a fixed $x_0\in (0,1)$ and a $\phi_{x_0}$-function $\Gamma_{\phi_{x_0}}$ with $\phi_{x_0}\in\Phi_{x_0}$ satisfying  $\phi_{x_0}(t)\le t$ for all $t\in[0,1-x_0]$, $A_{\Gamma_{\phi_{x_0}}}$
	is the largest quasi-copula whose sub-track section along the $x_0$-curve $\phi_{x_0}$ equals  $\Gamma_{\phi_{x_0}}$, i.e., $Q\le  A_{\Gamma_{\phi_{x_0}}}$
	for all $Q\in \mathbb{Q}_{\Gamma_{\phi_{x_0}}}$.
\end{prop}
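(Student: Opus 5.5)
Since the preceding results already show that $A_{\Gamma_{\phi_{x_0}}}$ is a quasi-copula in $\mathbb{Q}_{\Gamma_{\phi_{x_0}}}$, it only remains to show $Q(u,v)\le A_{\Gamma_{\phi_{x_0}}}(u,v)$ for an arbitrary $Q\in\mathbb{Q}_{\Gamma_{\phi_{x_0}}}$ and every $(u,v)\in[0,1]^2$. Because $Q\le M$, we have $Q(u,v)\le \min\{u,v\}$. Hence it suffices to prove two inequalities: $Q(u,v)\le v-q^{II}_{\phi_{x_0}}(u,v)$ on $T_{U,\phi_{x_0}}$, and $Q(u,v)\le u-x_0-q^{I}_{\phi_{x_0}}(u,v)$ on $T_{L,\phi_{x_0}}$. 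Each will follow from the Lipschitz condition (Q3) together with monotonicity (Q2), applied between $(u,v)$ and a point of the $x_0$-curve at which the maximum defining $q^{II}$ or $q^{I}$ is attained.

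On $T_{U,\phi_{x_0}}$, take $t^*\in[m_{\phi_{x_0}}(u,v),M_{\phi_{x_0}}(u,v)]$ with $q^{II}_{\phi_{x_0}}(u,v)=t^*-\Gamma_{\phi_{x_0}}(t^*)$. Here $\phi_{x_0}(\max\{u-x_0,0\})\le t^*\le \min\{v,1-x_0\}$, so $x_0+\phi^{-1}_{x_0}(t^*)\ge u$ and $t^*\le v$. Monotonicity in the first argument and Lipschitz in the second then give
\[
Q(u,v)\le Q\big(x_0+\phi^{-1}_{x_0}(t^*),v\big)\le Q\big(x_0+\phi^{-1}_{x_0}(t^*),t^*\big)+v-t^*=\Gamma_{\phi_{x_0}}(t^*)+v-t^*=v-q^{II}_{\phi_{x_0}}(u,v).
\]
The case $u<x_0$ needs no separate treatment: then $\max\{u-x_0,0\}=0$, and $x_0+\phi^{-1}_{x_0}(t^*)\ge x_0>u$ still holds.

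On $T_{L,\phi_{x_0}}$ we have $u>x_0$ and $v<\phi_{x_0}(u-x_0)$, and we take $t^*\in[v,\phi_{x_0}(u-x_0)]$ with $q^{I}_{\phi_{x_0}}(u,v)=\phi^{-1}_{x_0}(t^*)-\Gamma_{\phi_{x_0}}(t^*)$. Now $t^*\ge v$ and $x_0+\phi^{-1}_{x_0}(t^*)\le u$. Monotonicity in the second argument and Lipschitz in the first give
\[
Q(u,v)\le Q(u,t^*)\le Q\big(x_0+\phi^{-1}_{x_0}(t^*),t^*\big)+u-x_0-\phi^{-1}_{x_0}(t^*)=u-x_0-q^{I}_{\phi_{x_0}}(u,v).
\]
Combining these bounds with $Q\le\min\{u,v\}$ yields $Q\le A_{\Gamma_{\phi_{x_0}}}$ everywhere.

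The main point needing care is the existence of the maximizers $t^*$ and the bookkeeping at the edges of each region. Continuity of $\hat\Gamma^{I}_{\phi_{x_0}}$ and $\hat\Gamma^{II}_{\phi_{x_0}}$, which follows from (D2), guarantees that the maxima over the compact intervals are attained. At the edges one must check that the truncation in $M_{\phi_{x_0}}$ (at $1-x_0$ when $v>1-x_0$) and in $m_{\phi_{x_0}}$ (at $0$ when $u<x_0$) does not break the inequalities $x_0+\phi^{-1}_{x_0}(t^*)\ge u$, $t^*\le v$, and $x_0+\phi^{-1}_{x_0}(t^*)\le u$. This is immediate from the ranges of $t^*$ described above.
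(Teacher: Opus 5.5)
Your proof is correct and is essentially the paper's argument. On each of $T_{U,\phi_{x_0}}$ and $T_{L,\phi_{x_0}}$ you combine $Q\le M$ with one Lipschitz step and one monotonicity step to compare $Q(u,v)$ with $\Gamma_{\phi_{x_0}}(t)=Q\big(x_0+\phi^{-1}_{x_0}(t),t\big)$. The only differences are cosmetic: you apply monotonicity before the Lipschitz step (the paper does the reverse), and you work with a maximizer $t^*$ rather than bounding for every admissible $t$ and then taking the maximum.
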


\begin{proof}
 Consider a quasi-copula  $Q\in \mathbb{Q}_{\Gamma_{\phi_{x_0}}}$. For a given $(u, v) \in[0, 1]^2$, we distinguish two cases. 
 \begin{itemize}
 	\item  Assume first that
 	$(u, v) \in T_{U,\phi_{x_0}}$, that is, $v\ge \phi_{x_0} (\max\{u-x_0,0\})$. Then,
 	$A_{\Gamma_{\phi_{x_0}}}(u,v)=\min\{u, v-q^{II}_{\phi_{x_0}}(u,v)\}$ 
 	and 
 	$$q^{II}_{\phi_{x_0}}(u,v)=\max\{t-\Gamma_{\phi_{x_0}}(t)|t\in[m_{\phi_{x_0}}(u,v), M_{\phi_{x_0}}(u,v)]\}$$
 	$$\ \ \ \ \ \ \ \ \ \ \ \ \ \ \ \ \ \  \ \ \ \ \ \ \ \ \ \ \ \ \ \ \ \ \ \ \ \ \  \ \  =\max\big\{t-\Gamma_{\phi_{x_0}}(t)|t\in[ \phi_{x_0}(\max\{u-x_0,0\}),\min\{v, 1-x_0\}]\big\}.$$
 	Since $Q(u,v)\le \min\{u,v\}$, it suffices to verify the inequality $Q(u,v)\le v-q^{II}_{\phi_{x_0}}(u,v)$. For any $t\in[ \phi_{x_0}(\max\{u-x_0,0\}),\min\{v, 1-x_0\}]$, the Lipschitz condition yields
 	$$Q(u,v)-Q(u,t)\le v-t\ \ {\rm or,\  equivalently}\ \ Q(u,v)\le v-t+Q(u,t).$$
 	Since a quasi-copula is nondecreasing,	then	for all $t\in [ \phi_{x_0}(\max\{u-x_0,0\}),\min\{v,1-x_0\}]$, it holds that
 	$$Q(u,v)\le v-t+Q(u,t)\le v-t+Q\big(x_0+\phi^{-1}_{x_0}(t),t\big)=v-t+\Gamma_{\phi_{x_0}}(t)=v-\hat\Gamma^{II}_{\phi_{x_0}}(t),$$	
 	and thus,
 	$$Q(u,v)\le v-\max\big\{\hat \Gamma^{II}_{\phi_{x_0}}(t)|t\in[ \phi_{x_0}(\max\{u-x_0,0\}),\min\{v, 1-x_0\}]\big\}=v-q^{II}_{\phi_{x_0}}(u,v).$$	
 Consequently, $Q(u,v)\le  A_{\Gamma_{\phi_{x_0}}}(u,v)$ for all $(u, v)\in T_{U,\phi_{x_0}}$.  
 
 \item We now consider the case $(u, v)\in T_{L,\phi_{x_0}}$, which means $x_0\le u\le 1$ and $v\le \phi_{x_0} (u-x_0)$. Since $Q(u,v)\le \min\{u,v\}\le v$, it suffices to prove the inequality $Q(u, v)\le 
 u-x_0-q^{I}_{\phi_{x_0}}(u,v)$. 
 By an argument analogous to that used in the above case, we have that for every $t\in [m_{\phi_{x_0}}(u, v), M_{\phi_{x_0}}(u, v)]=[v, \phi_{x_0}(u-x_0)]$,
 $$Q(u,v)\le u-\big(\phi^{-1}_{x_0}(t)+x_0\big)+Q\big(\phi^{-1}_{x_0}(t)+x_0,v\big)\le u-x_0-\phi^{-1}_{x_0}(t)+Q\big(\phi^{-1}_{x_0}(t)+x_0,t\big)$$
 $$=u-x_0-\phi^{-1}_{x_0}(t)+\Gamma_{\phi_{x_0}}(t)=u-x_0-\hat\Gamma^{I}_{\phi_{x_0}}(t),\ \ \ \ \ \ \ \ \ \ \ \ \ \ \ \ \ \ \ \ \ \ \ \ \ \ \ \ \ \ \ \ \ \ \ \ \ \ \  $$
where the first inequality follows from the Lipschitz condition of $Q$, and the second inequality follows from the nondecreasing property of $Q$. 
 Hence, $Q(u,v)\le u-x_0-\max\{\hat\Gamma^{I}_{\phi_{x_0}}(t)|t\in[v, \phi_{x_0}(u-x_0)]\}=u-x_0-q^{I}_{\phi_{x_0}}(u,v)$, which implies $Q(u,v)\le  A_{\Gamma_{\phi_{x_0}}}(u,v)$ for all $(u,v)\in T_{L,\phi_{x_0}}$. 
 
 This completes the proof.\end{itemize}\end{proof}

The examples presented below show that $A_{\Gamma_{\phi_{x_0}}}$ is generally a proper quasi-copula and that it is the best-possible upper bound for quasi-copulas  with a prescribed  sub-track section $\Gamma_{\phi_{x_0}}$ along the $x_0$-curve $\phi_{x_0}$.

\begin{example} Let  $x_0=0.1$ and consider the sub-track section  $\Gamma_{\phi_{x_0}}(t)=t\big(x_0+t^{1/2}(1-x_0)^{1/2}\big)$ of the product copula $\Pi$  along the $x_0$-curve $\phi_{x_0}(t)=(1-x_0)\big(\frac{t}{1-x_0}\big)^{2}$, $t\in[0, 1-x_0]$.   Choose a  square $[x_0+0.01^{1/2}(1-x_0)^{1/2},\  x_0+0.02^{1/2}(1-x_0)^{1/2}]\times[0.01,0.02]=[0.19487,0.23416]\times[0.01,0.02]\subset [0,1]^2$. A straightforward calculation shows that  
	\begin{eqnarray*}&&	V_{A_{\Gamma_{\phi_{x_0}}}}([0.19487,0.23416]\times[0.01,0.02])	\nonumber \\	&=&A_{\Gamma_{\phi_{x_0}}}(0.19487,0.01) + A_{\Gamma_{\phi_{x_0}}}(0.23416,0.02)- A_{\Gamma_{\phi_{x_0}}}(0.19487,0.02)- A_{\Gamma_{\phi_{x_0}}}(0.23416,0.01)\nonumber \\
	&=&-0.00273<0.	\end{eqnarray*}
Hence, in this case, $A_{\Gamma_{\phi_{x_0}}}$ is not a copula but a proper quasi-copula. 	
\end{example}

\begin{example} 	Let $x_0\in(0,1)$ and $\Gamma_{\phi_{x_0}}(t)=t$ be the sub-track section of the Fr\'{e}chet-Hoeffding upper bound $M$  along the $x_0$-curve $\phi_{x_0}(t)=(1-x_0)\big(\frac{t}{1-x_0}\big)^{\beta}$, $t\in[0, 1-x_0]$, $\beta\ge 1$.
Notice easily that  $\hat\Gamma^{I}_{\phi_{x_0}}(t)=\phi^{-1}_{x_0}(t)-\Gamma_{\phi_{x_0}}(t)=t^{1/\beta}(1-x_0)^{1-1/\beta}-t$ and  $\hat\Gamma^{II}_{\phi_{x_0}}(t)=t-\Gamma_{\phi_{x_0}}(t)=0$ for every $t\in[0, 1-x_0]$. Hence, 
	\begin{equation*}A_{\Gamma_{\phi_{x_0}}}(u,v)
	=\left\{\begin{array}{ll}
		\min\{u, v\}, & \ {\rm if}\  (u,v)\in T_{U,\phi_{x_0}}, \\ 
		\min\big\{v, u-x_0-\max\{t^{1/\beta}(1-x_0)^{1-1/\beta}-t|t\in[v, \phi_{x_0}(u-x_0)]\}\big\},&  \ {\rm if}\  (u,v)\in T_{L,\phi_{x_0}}. 
	\end{array}\right.
\end{equation*}	
Notice that when $t=t_0:=(1-x_0)\beta^{\beta/(1-\beta)}$, the function $t^{1/\beta}(1-x_0)^{1-1/\beta}-t$ attains the maximum on $[0,1-x_0]$. Therefore, 
\begin{itemize}
	\item[($i$)] If $\phi_{x_0}(u-x_0)\le t_0$, then $\max\{t^{1/\beta}(1-x_0)^{1-1/\beta}-t|t\in[v, \phi_{x_0}(u-x_0)]\}=u-x_0-\phi_{x_0}(u-x_0)$ such that $ u-x_0-\max\{t^{1/\beta}(1-x_0)^{1-1/\beta}-t|t\in[v, \phi_{x_0}(u-x_0)]\}=u-x_0-\big(u-x_0-\phi_{x_0}(u-x_0)\big)=\phi_{x_0}(u-x_0)\ge v$.
	\item  [($ii$)] If $v< t_0< \phi_{x_0}(u-x_0)$, then $\max\{t^{1/\beta}(1-x_0)^{1-1/\beta}-t|t\in[v, \phi_{x_0}(u-x_0)]\}=\phi^{-1}_{x_0}(t_0)-t_0$ such that $ u-x_0-\max\{t^{1/\beta}(1-x_0)^{1-1/\beta}-t|t\in[v, \phi_{x_0}(u-x_0)]\}=u-x_0-\phi^{-1}_{x_0}(t_0)+t_0> v$. 
		\item  [($iii$)] If $t_0\le v$, then $\max\{t^{1/\beta}(1-x_0)^{1-1/\beta}-t|t\in[v, \phi_{x_0}(u-x_0)]\}=\phi^{-1}_{x_0}(v)-v$ such that $ u-x_0-\max\{t^{1/\beta}(1-x_0)^{1-1/\beta}-t|t\in[v, \phi_{x_0}(u-x_0)]\}=u-x_0-\phi^{-1}_{x_0}(v)+v\ge v$, where the inequality follows from $v\le \phi_{x_0}(u-x_0)$. 
\end{itemize}
Summarizing the above results,   when $(u,v)\in T_{L,\phi_{x_0}}$, we have  $A_{\Gamma_{\phi_{x_0}}}(u,v)=\min\big\{v, u-x_0-\max\{t^{1/\beta}(1-x_0)^{1-1/\beta}-t|t\in[v, \phi_{x_0}(u-x_0)]\}\big\}=v=\min\{u,v\}$. Consequently, $A_{\Gamma_{\phi_{x_0}}}(u,v)=M(u,v)$ for all $(u,v)\in[0,1]^2$, and thus it is the largest quasi-copula with the prescribed sub-track section $\Gamma_{\phi_{x_0}}(t)=t$.

\end{example}

\section{Properties of bound (quasi-)copulas with a prescribed sub-track section}

This section investigates several properties of the bound (quasi-)copulas $B_{\Gamma_{\phi_{x_0}}}$ and $A_{\Gamma_{\phi_{x_0}}}$ with the prescribed sub-track section $\Gamma_{\phi_{x_0}}$, including their comparison with bound (quasi-)copulas with a given track section or a sub-diagonal section, as well as their  statistical characteristic and concordance order. 

\subsection{Comparison with bound (quasi-)copulas with a given track section or a sub-diagonal section}

In this subsection, the results obtained in the preceding sections are compared with the works of \citet{De2019On} and \citet{2022Best}, who introduced the notions of Bertino copula  $B_{\Gamma_{\phi}}$ with the curvilinear section $\Gamma_{\phi}$, $\Gamma_{\phi}$-curvilinear quasi-copula $A_{\Gamma_{\phi}}$ and $\Gamma_{\phi}$-curvilinear copula $K_{\Gamma_{\phi}}$, as well as with the work of \citet{2008quasi-copulas}, who introduced the sub-diagonal quasi-copula, i.e., the quasi-copula with a given sub-diagonal section. 

Since every $x_0$-curve $\phi_{x_0}\in\Phi_{x_0}$ coincides with the  curve $\phi\in \Phi$ when  $x_0=0$, i.e., $\phi_{0}(t)=\phi(t)$ for $t\in[0,1]$, the following results can be obtained.
\begin{prop}
	
	(i) When $x_0=0$,  $A_{\Gamma_{\phi_0}}$   equals  the quasi-copula $G_{\Gamma_{\phi_0}}$ of type \eqref{eq:A copula0}, which reduces to  $\Gamma_{\phi}$-curvilinear quasi-copula $A_{\Gamma_{\phi}}$ 
	with the given track section $\Gamma_{\phi}(t)=\Gamma_{\phi_0}\big(\phi_0(t)\big)$, $t\in[0,1]$ along the curve $\phi=\phi_0$.
	
	(ii) 
	When $x_0=0$,  $B_{\Gamma_{\phi_0}}$ reduces to the Bertino copula  $B_{\Gamma_{\phi}}$ 
	with the given curvilinear section $\Gamma_{\phi}(t)=\Gamma_{\phi_0}\big(\phi_0(t)\big)$, $t\in[0,1]$  along the curve $\phi=\phi_0$.
	
	(iii) When $x_0=0$,  $K_{\Gamma_{\phi_0}}$ reduces to $\Gamma_{\phi}$-curvilinear copula $K_{\Gamma_{\phi}}$ 
	with the given track section $\Gamma_{\phi}(t)=\Gamma_{\phi_0}\big(\phi_0(t)\big)$, $t\in[0,1]$  along the curve $\phi=\phi_0$.
	
	(iv) When $x_0=0$, the functions 
	$A^*_{\Gamma_{\phi_0}}$, $B^*_{\Gamma_{\phi_0}}$ and $K^*_{\Gamma_{\phi_0}}$ are three families of  symmetric quasi-copulas with the given  track section $\Gamma_{\phi}(t)=\Gamma_{\phi_0}\big(\phi_0(t)\big)$, $t\in[0,1]$  along the curve $\phi=\phi_0$.
\end{prop}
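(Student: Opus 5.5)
The plan is to substitute $x_0=0$ into each of the formulas \eqref{eq:G copula1*}, \eqref{eq:A copula0}, \eqref{eq:Bx} and \eqref{eq:K copula0} and rewrite them with the change of variables $t=\phi_0(s)$. This should turn them into \eqref{eq:A copula}, \eqref{eq:Bertino copula+in} and \eqref{eq:K copula}, with $\phi=\phi_0$ and $\Gamma_\phi(s)=\Gamma_{\phi_0}(\phi_0(s))$. For $x_0=0$ we have $D_{L,0}=[0,1]^2$ and $D_{U,0}$ degenerates to the segment $\{0\}\times[0,1]$, so the ``$W$ otherwise'' branches only concern boundary points, where $W$ agrees with every quasi-copula. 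Also $\max\{u-0,0\}=u$, so $m_{\phi_0}(u,v)=\min\{\phi(u),v\}$ and $M_{\phi_0}(u,v)=\max\{\phi(u),v\}$ (truncation at $1$ is void). Finally $T_{U,\phi_0}=\{v\ge\phi(u)\}$ and $T_{L,\phi_0}=\{v<\phi(u)\}$, which are exactly the two regions used in \eqref{eq:Bertino copula+in}--\eqref{eq:A copula}.

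For (ii), on $\{v<\phi(u)\}$ the first branch of \eqref{eq:Bx} is $v-\min\{t-\Gamma_{\phi_0}(t)\mid t\in[v,\phi(u)]\}$. Substituting $t=\phi(s)$, where $s$ ranges over $[\phi^{-1}(v),u]$ because $\phi$ is an increasing bijection, gives $v-\min\{\phi(s)-\Gamma_\phi(s)\mid s\in[\phi^{-1}(v),u]\}$, which is the first branch of $B_{\Gamma_\phi}$. On $\{v\ge\phi(u)\}$, $\hat\Gamma^I_{\phi_0}(t)=\phi^{-1}(t)-\Gamma_{\phi_0}(t)$ becomes $s-\Gamma_\phi(s)$, so we recover $u-\min\{s-\Gamma_\phi(s)\mid s\in[u,\phi^{-1}(v)]\}$. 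Item (i) is handled the same way. In \eqref{eq:G copula1*}, $q^{II}_{\phi_0}$ on $T_U$ becomes $\max\{\phi(s)-\Gamma_\phi(s)\mid s\in[u,\phi^{-1}(v)]\}$, and $q^I_{\phi_0}$ on $T_L$ becomes $\max\{s-\Gamma_\phi(s)\mid s\in[\phi^{-1}(v),u]\}$. This yields \eqref{eq:A copula} verbatim. Reading off \eqref{eq:A copula0} at $x_0=0$ gives the identical expressions, so $A_{\Gamma_{\phi_0}}=G_{\Gamma_{\phi_0}}=A_{\Gamma_\phi}$. For (iii), \eqref{eq:K copula0} becomes $\min\{u,v,(\Gamma_{\phi_0}(\phi(u))+\Gamma_{\phi_0}(v))/2\}$. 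Using $\Gamma_{\phi_0}(\phi(u))=\Gamma_\phi(u)$ and $\Gamma_{\phi_0}(v)=\Gamma_\phi(\phi^{-1}(v))$, this is \eqref{eq:K copula}. The hypotheses of Proposition \ref{prop:Kcopula*}(i) reduce correspondingly to the nondecreasingness of $2t-\Gamma_\phi(t)$ and $2t-\Gamma_\phi(\phi^{-1}(t))$.

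For (iv), Lemma \ref{lem:sym} shows that the symmetrizations $Q^*(u,v)=Q(\max\{u,v\},\min\{u,v\})$ of $A_{\Gamma_{\phi_0}}$, $B_{\Gamma_{\phi_0}}$ and $K_{\Gamma_{\phi_0}}$ are symmetric quasi-copulas. Their track section along $\phi$ is $Q^*(s,\phi(s))=Q(s,\phi(s))=\Gamma_\phi(s)$, where we use $\phi(s)\le s$ (the standing assumption $\phi_{x_0}(t)\le t$) so that $\max\{s,\phi(s)\}=s$. Note that $S_{0}=\{(0,0),(1,1)\}$, so the $W$ branch is again immaterial there. This is the same computation as at the end of the proof of Proposition \ref{prop:Bertino copula*}.

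The main obstacle is not any single step but the bookkeeping. One has to check that the degenerate regions $D_{U,0}$ and $S_0$, together with the boundary $\{u=0\}$, where $\max\{u-x_0,0\}$ and the convention for $T_{U}$ interact, do not produce discrepancies; on those sets every function involved is forced to equal $0$ or the boundary values. One also has to check that the strict versus non-strict inequality convention defining $T_{L,\phi_0}$ matches that of \eqref{eq:Bertino copula+in}--\eqref{eq:A copula}. On the curve $v=\phi(u)$ both branches give $\Gamma_\phi(u)$, so the convention is harmless.
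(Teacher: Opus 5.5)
Your proposal is correct and follows the paper's route. The paper substitutes $x_0=0$ and reparametrizes via $t=\phi_0(s)$ for (i), then declares (ii)--(iv) analogous; you carry those remaining cases out explicitly. One harmless slip: $D_{U,0}=[0,0]\times[1,1]$ is the single point $(0,1)$, not a segment, and since $D_{L,0}=[0,1]^2$ the ``otherwise'' branches of \eqref{eq:Bx}, \eqref{eq:K copula0} and \eqref{eq:A copula0} are in fact empty.
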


\begin{proof}
	We only prove the statement ($i$), since the statements ($ii$), ($iii$) and ($iv$) can be verified by similar arguments.
	
	 When $x_0=0$, from the expressions of $A_{\Gamma_{\phi_0}}$ and $G_{\Gamma_{\phi_0}}$, it follows immediately that $A_{\Gamma_{\phi_0}}(u,v)=G_{\Gamma_{\phi_0}}(u,v)$ for all $(u,v)\in[0,1]^2$.
	
Furthermore, when $x_0=0$, 
	\begin{equation*}A_{\Gamma_{\phi_{0}}}(u,v)
		=\left\{\begin{array}{ll}
			\min\big\{u, v-\max\{t-\Gamma_{\phi_{0}}(t)|t\in[ \phi_{0}(u),v]\}\big\}, & \ {\rm if}\  v\ge \phi_{0}(u), \\ 
			\min\big\{v, u-\max\{\phi^{-1}_{0}(t)-\Gamma_{\phi_{0}}(t)|t\in[v, \phi_{0}(u)]\}\big\},&  \ {\rm if}\  v< \phi_{0}(u)  
		\end{array}\right.
	\end{equation*}	
	\begin{equation*}
		\ \ \ \ \ \ \ \	\ \ \ \ \ \ \ \	\ \ \ \ \ \ \ \	=\left\{\begin{array}{ll}
			\min\big\{u, v-\max\{\phi_0(t)-\Gamma_{\phi_{0}}\big(\phi_0(t)\big)|t\in[u, \phi^{-1}_{0}(v)]\}\big\}, & \ {\rm if}\  v\ge \phi_{0}(u), \\ 
			\min\big\{v, u-\max\big\{t-\Gamma_{\phi_{0}}\big(\phi_0(t)\big)|t\in[\phi^{-1}_{0}(v), u]\}\big\},&  \ {\rm if}\  v< \phi_{0}(u) 
		\end{array}\right.
	\end{equation*}
	\begin{equation*}
		\ \ \ \ \ \ \ 	\ \ \ \ \ \ \ 			=\left\{\begin{array}{ll}
			\min\big\{u, v-\max\{\phi(t)-\Gamma_{\phi}\big(t\big)|t\in[u, \phi^{-1}(v)]\}\big\}, & \ {\rm if}\  v\ge \phi(u), \\ 
			\min\big\{v, u-\max\big\{t-\Gamma_{\phi}\big(t\big)|t\in[\phi^{-1}(v), u]\}\big\},&  \ {\rm if}\  v< \phi(u) 
		\end{array}\right.
	\end{equation*}
	$$=A_{\Gamma_{\phi}}(u,v), \ \ \ \ \ \ \  \ \	\ \ \ \ \ \ \ 	\ \ \ \ \ \ \ 	\ \ \ \ \ \ \ 	\ \ \ \ \ \ \ 	\ \ \ \ \ \ \ 	\ \ \ \ \ \ \ 	\ \ \ \ \ \ \ 	\ \ \ \ \ \ \ 	\ \ \ \ \ \ \ 	\ \ \ \ \ \ \ $$
 where  $\Gamma_{\phi}(t)=\Gamma_{\phi_0}\big(\phi_0(t)\big)$ is the track section along the curve $\phi=\phi_0$.
\end{proof}

For a fixed  $x_0\in(0,1)$, when $\phi_{x_0}(t)=t$ for $t\in[0,1-x_0]$, we directly obtain the bounds for the class of quasi-copulas with a prescribed  sub-diagonal section $\delta_{x_0}$. 

\begin{prop}
	For any $x_0\in(0,1)$, when $\phi_{x_0}(t)=t$ for  $t\in[0,1-x_0]$, $B_{\Gamma_{\phi_{x_0}}}$ and $A_{\Gamma_{\phi_{x_0}}}$ are respectively the 
 smallest and largest quasi-copulas 
	whose sub-diagonal section at $x_0$ coincides with $\delta_{x_0}(t)=\Gamma_{\phi_{x_0}}(t)$ for  $t\in[0,1-x_0]$.
\end{prop}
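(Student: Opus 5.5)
The plan is to specialize Theorems \ref{B+} and \ref{A} to the identity curve; no new analytic argument should be needed. Fix $x_0\in(0,1)$ and let $\phi_{x_0}(t)=t$ on $[0,1-x_0]$. This curve lies in $\Phi_{x_0}$: it is strictly increasing, continuous, and satisfies $\phi_{x_0}(0)=0$ and $\phi_{x_0}(1-x_0)=1-x_0$. It also meets the standing hypothesis $\phi_{x_0}(t)\le t$ for all $t\in[0,1-x_0]$, with equality.

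The first step is to identify the objects. By Remark \ref{4}($ii$), for any quasi-copula $Q$ the sub-track section along this $x_0$-curve is $\Gamma_{Q,\phi_{x_0}}(t)=Q(x_0+t,t)=\delta_{Q,x_0}(t)$, which is the sub-diagonal section at $x_0$. Hence the class $\mathbb{Q}_{\Gamma_{\phi_{x_0}}}$ is exactly the class of quasi-copulas whose sub-diagonal section at $x_0$ equals $\delta_{x_0}:=\Gamma_{\phi_{x_0}}$.

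We should also check that every admissible sub-diagonal section is a $\phi_{x_0}$-function, so the theorems apply. With $\phi^{-1}_{x_0}={\rm id}$, conditions (S1)–(S2) become
\[
\max\{x_0+2t-1,0\}\le\delta_{x_0}(t)\le t
\quad\text{and}\quad
0\le\delta_{x_0}(t_2)-\delta_{x_0}(t_1)\le 2(t_2-t_1).
\]
These are precisely the conditions that any sub-diagonal section of a quasi-copula satisfies, by the remark following Definition \ref{sub-track1}.

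The second step is to invoke the two bound theorems. Theorem \ref{B+} gives that $B_{\Gamma_{\phi_{x_0}}}$ belongs to $\mathbb{C}_{\Gamma_{\phi_{x_0}}}\subset\mathbb{Q}_{\Gamma_{\phi_{x_0}}}$ and satisfies $B_{\Gamma_{\phi_{x_0}}}\le Q$ for every $Q$ in the class. Theorem \ref{A}, proved through the subsequent lemmas and propositions, gives that $A_{\Gamma_{\phi_{x_0}}}$ is a quasi-copula in $\mathbb{Q}_{\Gamma_{\phi_{x_0}}}$ with $Q\le A_{\Gamma_{\phi_{x_0}}}$ for every $Q$ in the class. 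Since both bounds belong to the class, they are its minimum and maximum, which is the claim.

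For readability, I would optionally write out the specialized formulas. Here $\hat\Gamma^I_{\phi_{x_0}}=\hat\Gamma^{II}_{\phi_{x_0}}=t-\delta_{x_0}(t)$, and $m_{\phi_{x_0}}$, $M_{\phi_{x_0}}$ become $\min$ and $\max$ of $\max\{u-x_0,0\}$ and $v$, truncated at $1-x_0$. This yields explicit expressions that can be compared with the sub-diagonal bounds of \cite{2008quasi-copulas}. I do not expect a real obstacle; the only point needing care is confirming that the hypothesis $\phi_{x_0}(t)\le t$ is not strict anywhere in the earlier proofs. For example, the proof of Proposition \ref{prop:Bertino copula*} uses $\phi(t)\le t$ only to apply \cite{De2019On}, and equality there is harmless.
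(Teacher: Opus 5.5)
Your proposal is correct and follows the paper's route exactly: the paper gives no separate proof, saying only that the result is obtained directly by specializing Theorems \ref{B+} and \ref{A} to $\phi_{x_0}={\rm id}$ (which satisfies $\phi_{x_0}(t)\le t$ with equality), with Remark \ref{4}($ii$) identifying the sub-track section with the sub-diagonal section. Your additional checks are sound: the identity is an admissible $x_0$-curve, and equality in $\phi_{x_0}(t)\le t$ is harmless (the hypothesis enters, for instance, in Theorem \ref{prop:G} to give $\Gamma_{\phi}(t)\le\min\{t,\phi(t)\}$).
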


\subsection{Statistical characteristic of the  bound copula $B_{\Gamma_{\phi_{x_0}}}$ with a prescribed sub-track section }

In the following theorem, we provide a characterization of the best-possible bound copula $B_{\Gamma_{\phi_{x_0}}}$ with the given sub-track section $\Gamma_{\phi_{x_0}}$, expressed in terms of the joint distribution
of $$\max\big\{\phi_{x_0}(\max\{U-x_0,0\}), V\big\}\ \ {\rm and}\ \  \min\big\{\phi_{x_0}(\max\{U-x_0,0\}), V\big\},$$
  where $U$ and $V$ are uniform $[0, 1]$ random variables.

\begin{prop}\label{thm:stastical B}
	Let  
	$U$ and $V$ be random variables uniformly distributed on $[0,1]$, whose joint distribution function is given by a copula  $B_{\Gamma_{\phi_{x_0}}}$ with the prescribed  sub-track section $\Gamma_{\phi_{x_0}}$. Then, for every $(u,v)\in D_{L,x_0}$, there exist $t_1, t_2\in[\min\{\phi_{x_0}(u-x_0), v\}$,
	$ \max\{\phi_{x_0}(u-x_0), v\}]$ such that
	$$\mathbb{P}\big(\min\big\{\phi_{x_0}(\max\{U-x_0,0\}), V\big\}\le \min\{\phi_{x_0}(u-x_0), v\}\ {\rm and}\  \max\big\{\phi_{x_0}(\max\{U-x_0,0\}), V\big\}>\max\{\phi_{x_0}(u-x_0), v\}\big)$$
	\begin{equation}\label{eq:stastical B}
		=\mathbb{P}\big(U\le \phi^{-1}_{x_0}(t_1)+x_0<\phi^{-1}_{x_0}(V)+x_0\big)+\mathbb{P}\big(\phi^{-1}_{x_0}(V)+x_0\le \phi^{-1}_{x_0}(t_2)+x_0<U\big),\end{equation}
and for every $(u,v)\in [0,1]^2\setminus D_{L,x_0}$, $\mathbb{P}(U\le u, V \le v )=\mathbb{P}(1-v<U\le u)=\mathbb{P}(1-u<V\le v)$. 
	\end{prop}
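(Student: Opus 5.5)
We prove the two claims separately, starting with the easy one. Write $Z:=\phi_{x_0}(\max\{U-x_0,0\})$, $a:=\phi_{x_0}(u-x_0)$, $m:=\min\{a,v\}$ and $M:=\max\{a,v\}$.

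\emph{The region outside $D_{L,x_0}$.} Here $B_{\Gamma_{\phi_{x_0}}}(u,v)=W(u,v)=\max\{u+v-1,0\}$. We use $\mathbb{P}(U\le u)=u$ and $\mathbb{P}(V\le v)=v$. By inclusion--exclusion,
$$\mathbb{P}(1-v<U\le u)=u-\mathbb{P}(U\le \min\{u,1-v\})=\max\{u+v-1,0\},$$
and the same computation gives $\mathbb{P}(1-u<V\le v)=\max\{u+v-1,0\}$. So on this region the three expressions all equal $W(u,v)$, as claimed. One could also see this from the $W$-ordinal sum structure: in that region the mass lies on the antidiagonal, but the computation above already suffices.

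\emph{The left-hand side of \eqref{eq:stastical B} on $D_{L,x_0}$.} Fix $(u,v)\in D_{L,x_0}$. Since $m\le M$,
$$\{\min\{Z,V\}\le m,\ \max\{Z,V\}>M\}=\{Z\le m,\,V>M\}\cup\{V\le m,\,Z>M\},$$
and the union is disjoint. Since $\phi_{x_0}$ is strictly increasing,
$$\{Z\le s\}=\{U\le x_0+\phi^{-1}_{x_0}(s)\}\quad\text{for } s\ge 0.$$
Each piece is then a rectangle probability, computable from $C=B_{\Gamma_{\phi_{x_0}}}$ with $\bar u_s:=x_0+\phi^{-1}_{x_0}(s)$:
$$\mathbb{P}(Z\le m,\,V>M)=\bar u_m-C(\bar u_m,M),\qquad \mathbb{P}(V\le m,\,Z>M)=m-C(\bar u_M,m).$$
We evaluate $C$ at these points with \eqref{eq:Bx}. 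The point $(\bar u_m,M)$ lies in $T_{U,\phi_{x_0}}$, and the point $(\bar u_M,m)$ lies in $T_{L,\phi_{x_0}}$.

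\emph{Choosing $t_1,t_2$.} Let $t_1$ be a minimiser of $\hat\Gamma^I_{\phi_{x_0}}$ on $[m,M]$ and $t_2$ a minimiser of $\hat\Gamma^{II}_{\phi_{x_0}}$ on $[m,M]$. Both exist by continuity and lie in $[m,M]$. Formula \eqref{eq:Bx} then gives
$$\bar u_m-C(\bar u_m,M)=m'+\hat\Gamma^I_{\phi_{x_0}}(t_1)+\cdots,$$
where the omitted terms involve $\Gamma_{\phi_{x_0}}$ at $t_1$, and similarly for the second piece with $t_2$.

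\emph{Matching the right-hand side.} The right-hand side of \eqref{eq:stastical B} is
$$\mathbb{P}\big(Z\le t_1,\ V>t_1\big)+\mathbb{P}\big(V\le t_2,\ Z>t_2\big).$$
The first term equals $\bar u_{t_1}-\Gamma_{\phi_{x_0}}(t_1)=x_0+\hat\Gamma^I_{\phi_{x_0}}(t_1)$, using that $\mathbb{P}(Z\le t,\,V\le t)=\Gamma_{\phi_{x_0}}(t)$. The second term equals $t_2-\Gamma_{\phi_{x_0}}(t_2)=\hat\Gamma^{II}_{\phi_{x_0}}(t_2)$. This is the content of (D4), split into its two disjoint parts.

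The remaining step, and the main obstacle, is to verify that the expressions obtained from \eqref{eq:Bx} reduce exactly to these values. This hinges on two facts. First, the minimum defining $B_{\Gamma_{\phi_{x_0}}}$ is taken over the same interval $[m,M]$ regardless of which of $a$ and $v$ is larger. Second, the $x_0$ shift must be accounted for correctly. I would track this through the $W$-ordinal sum representation of Proposition \ref{prop:Bertino copula*}: it reduces the problem to the known analogue for the Bertino copula $B_{\Gamma_\phi}$ on $D_{L,x_0}$, where the corresponding statement (with $x_0=0$) is the rescaled version.

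If the constant $x_0$ does not cancel exactly, I would absorb it into the first probability, using that $U\le x_0$ carries mass only on the $W$ part. This requires care, because the events in \eqref{eq:stastical B} are stated with $\phi^{-1}_{x_0}(V)+x_0$, which is defined only when $V\le 1-x_0$.
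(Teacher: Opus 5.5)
\textbf{There is a gap.} Your approach is the paper's, and two parts of it are fine: the disjoint decomposition into $\{Z\le m,\,V>M\}$ and $\{V\le m,\,Z>M\}$, the choice of $t_1,t_2$ as minimisers of $\hat\Gamma^{I}_{\phi_{x_0}}$ and $\hat\Gamma^{II}_{\phi_{x_0}}$ on $[m,M]$, and the evaluation of the right-hand side via $C(x_0+\phi^{-1}_{x_0}(t),t)=\Gamma_{\phi_{x_0}}(t)$ all match. Working at the points $(\bar u_m,M)$ and $(\bar u_M,m)$ even covers both orderings of $a$ and $v$ at once, whereas the paper does $a\le v$ and calls $a>v$ similar. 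Your reading of the right-hand side as $\mathbb{P}(Z\le t_1,V>t_1)+\mathbb{P}(V\le t_2,Z>t_2)$ is also the correct way around $\phi^{-1}_{x_0}(V)$ being undefined for $V>1-x_0$.

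The gap is the one step that carries the proof: evaluating \eqref{eq:Bx} at those two points is never done. Your display $\bar u_m-C(\bar u_m,M)=m'+\hat\Gamma^I_{\phi_{x_0}}(t_1)+\cdots$ has an undefined $m'$ and ``omitted terms'' that do not exist. You label the matching the main obstacle and leave it open. Your fallbacks (a detour through the $W$-ordinal sum, or ``absorbing'' $x_0$) show you did not check that $x_0$ comes out right. As written, the identity is not established.

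The fix is direct substitution; no fallback is needed. Since $\bar u_m\in[x_0,1]$ and $\phi_{x_0}(\bar u_m-x_0)=m\le M\le 1-x_0$, the point $(\bar u_m,M)$ lies in $D_{L,x_0}\cap T_{U,\phi_{x_0}}$. The second branch of \eqref{eq:Bx} gives
\[
C(\bar u_m,M)=\phi^{-1}_{x_0}(m)-\min\{\hat\Gamma^{I}_{\phi_{x_0}}(t)\mid t\in[m,M]\}=\phi^{-1}_{x_0}(m)-\hat\Gamma^{I}_{\phi_{x_0}}(t_1),
\]
so $\bar u_m-C(\bar u_m,M)=x_0+\hat\Gamma^{I}_{\phi_{x_0}}(t_1)$ exactly.

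If $m<M$, then $(\bar u_M,m)\in D_{L,x_0}\cap T_{L,\phi_{x_0}}$. The first branch gives $C(\bar u_M,m)=m-\min\{\hat\Gamma^{II}_{\phi_{x_0}}(t)\mid t\in[m,M]\}=m-\hat\Gamma^{II}_{\phi_{x_0}}(t_2)$, hence $m-C(\bar u_M,m)=\hat\Gamma^{II}_{\phi_{x_0}}(t_2)$. If $m=M$, the point lies on the curve, in $T_{U,\phi_{x_0}}$, and \eqref{eq:Bx} gives $C=\Gamma_{\phi_{x_0}}(m)$, which is the same value.

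The left-hand side is therefore $x_0+\hat\Gamma^{I}_{\phi_{x_0}}(t_1)+\hat\Gamma^{II}_{\phi_{x_0}}(t_2)$. This is precisely what you obtained for the right-hand side: the $x_0$ appears identically on both sides.
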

	
	\begin{proof}
Fix a point $(u,v)\in D_{L,x_0}$ with $\phi_{x_0}(u-x_0)\le v$, and choose  $t_1, t_2\in[\phi_{x_0}(u-x_0), v]$ such that $\hat\Gamma^{I}_{\phi_{x_0}}(t_1)$ and $\hat\Gamma^{II}_{\phi_{x_0}}(t_2)$ are the respective minimum values of $\hat\Gamma^{I}_{\phi_{x_0}}(t)$ and $\hat\Gamma^{II}_{\phi_{x_0}}(t)$ on that interval. Then
\begin{eqnarray}\label{eq:statistical B1}
	&&\mathbb{P}\big(\min\big\{\phi_{x_0}(\max\{U-x_0,0\}), V\big\}\le \min\{\phi_{x_0}(u-x_0), v\}\ {\rm and}\nonumber\\
&&	
	\ \ \ \ \ \ \max\big\{\phi_{x_0}(\max\{U-x_0,0\}), V\big\}>\max\{\phi_{x_0}(u-x_0), v\}\big)\nonumber \\
	&=&\mathbb{P}(U\le u,V>v)+\mathbb{P}\big(V\le \phi_{x_0}(u-x_0),U>\phi_{x_0}^{-1}(v)+x_0\big)\nonumber\\
	&=&u-B_{\Gamma_{\phi_{x_0}}}(u,v)+\phi_{x_0}(u-x_0)-B_{\Gamma_{\phi_{x_0}}}\big(\phi_{x_0}^{-1}(v)+x_0,\phi_{x_0}(u-x_0)\big)\nonumber\\
	&=&u-\big(u-x_0-\min\{\phi^{-1}_{x_0}(t)-\Gamma_{\phi_{x_0}}(t)|t\in[\phi_{x_0}(u-x_0),v]\}\big)\nonumber\\
	&&+\phi_{x_0}(u-x_0)-\big(\phi_{x_0}(u-x_0)-\min\{t-\Gamma_{\phi_{x_0}}(t)|t\in[\phi_{x_0}(u-x_0),v]\}\big)\nonumber\\
	&=&x_0+\phi^{-1}_{x_0}(t_1)-\Gamma_{\phi_{x_0}}(t_1)+t_2-\Gamma_{\phi_{x_0}}(t_2)\nonumber\\
	&=&\mathbb{P}\big(U\le \phi^{-1}_{x_0}(t_1)+x_0<\phi^{-1}_{x_0}(V)+x_0\big)+\mathbb{P}\big(\phi^{-1}_{x_0}(V)+x_0\le \phi^{-1}_{x_0}(t_2)+x_0<U\big).\nonumber
\end{eqnarray}
Consequently, the equality \eqref{eq:stastical B} is verified.  The case $\phi_{x_0}(u-x_0)> v$ in $D_{L,x_0}$ follows by a similar argument. For $(u,v)\in [0,1]^2\setminus D_{L,x_0}$, it follows from $B_{\Gamma_{\phi_{x_0}}}(u,v)=W(u,v)$  that $\mathbb{P}(U\le u, V \le v )=\max\{u+v-1,0\}=\mathbb{P}(1-v<U\le u)=\mathbb{P}(1-u<V\le v)$. 
\end{proof}

\begin{example}
\label{EX: M++}  	Let $x_0\in(0,1)$ and $\Gamma_{\phi_{x_0}}(t)=t\big(x_0+(1-x_0)(\frac{t}{1-x_0})^{1/\beta}\big)^{\alpha}$ be the sub-track section of the Cuadras-Aug\'{e} family of copulas  along the $x_0$-curve $\phi_{x_0}(t)=(1-x_0)\big(\frac{t}{1-x_0}\big)^{\beta}$, $t\in[0, 1-x_0]$, $\beta\ge 1$. From Theorems \ref{B+} and \ref{A}, we know that the copula $B_{\Gamma_{\phi_{x_0}}}$ with the sub-track section   $\Gamma_{\phi_{x_0}}(t)=t\big(x_0+(1-x_0)(\frac{t}{1-x_0})^{1/\beta}\big)^{\alpha}$,   expressed as 
\begin{equation*}
B_{\Gamma_{\phi_{x_0}}}(u,v)
=	\left\{\begin{array}{ll}
		v-\min\{t-t\big(x_0+ &  \\
		\ \ \ \ \ \ \ \ 
		(1-x_0)(\frac{t}{1-x_0})^{1/\beta}\big)^{\alpha}|t\in[v,(1-x_0)\big(\frac{u-x_0}{1-x_0}\big)^{\beta}] \}, &   {\rm if}\  (u,v)\in D_{L, x_0}  \ {\rm and}\  v<(1-x_0)\big(\frac{u-x_0}{1-x_0}\big)^{\beta},\\
	u-x_0-\min\{(1-x_0)(\frac{t}{1-x_0})^{1/\beta}-t\big(x_0+&\\
			\ \ \ \ \ \ \ \ 	(1-x_0)(\frac{t}{1-x_0})^{1/\beta}\big)^{\alpha}|t\in[(1-x_0)\big(\frac{u-x_0}{1-x_0}\big)^{\beta},v] \},&  {\rm if}\  (u,v)\in D_{L, x_0}\ {\rm and} \ v\ge (1-x_0)\big(\frac{u-x_0}{1-x_0}\big)^{\beta}, \\
		W(u,v),&  \ {\rm otherwise}, 
	\end{array}\right.
\end{equation*}
is the best-possible lower bound for the class of copulas as well as for the class of quasi-copulas sharing the common sub-track section $\Gamma_{\phi_{x_0}}(t)=t\big(x_0+(1-x_0)(\frac{t}{1-x_0})^{1/\beta}\big)^{\alpha}$, and the quasi-copula 
	\begin{equation*}A_{\Gamma_{\phi_{x_0}}}(u,v)=\left\{\begin{array}{ll}
		\min\big\{u, v-\max\{t-t\big(x_0+ & \\
\ \ \ \ \ \ \ \ \ \ \ \ \ \ \ \ \ \ \ \ \ \ \ \ \ \ \ \ 
(1-x_0)(\frac{t}{1-x_0})^{1/\beta}\big)^{\alpha}|t\in[(1-x_0)\big(\frac{\max\{u-x_0,0\}}{1-x_0}\big)^{\beta},v] \}\big\}, & \ {\rm if}\  (u,v)\in T_{U,\phi_{x_0}}, \\ 
		\min\big\{v, u-x_0-\max\{(1-x_0)(\frac{t}{1-x_0})^{1/\beta}-t\big(x_0+&\\ 
\ \ \ \ \ \ \ \ \ \ \ \ \ \ \ \ \ \ \ \ \ \ \ \ \ \ \ \ 		(1-x_0)(\frac{t}{1-x_0})^{1/\beta}\big)^{\alpha}|t\in[v,(1-x_0)\big(\frac{u-x_0}{1-x_0}\big)^{\beta}] \}\big\},&  \ {\rm if}\  (u,v)\in T_{L,\phi_{x_0}}  
	\end{array}\right.
\end{equation*}	
is the best-possible upper bound for the class of quasi-copulas sharing the common sub-track section $t\big(x_0+(1-x_0)(\frac{t}{1-x_0})^{1/\beta}\big)^{\alpha}$.

From Proposition \ref{thm:stastical B}, we also have  that, for any $\beta\ge 1$, if the joint distribution function of uniform [0,1] random variables $U$ and $V$ is  the bound copula $B_{\Gamma_{\phi_{x_0}}}$, then for each   $(u,v)\in D_{L,x_0}$,  there exist $t_1, t_2\in[\min\{(1-x_0)\big(\frac{u-x_0}{1-x_0}\big)^{\beta},v\}$,
$ \max\{(1-x_0)\big(\frac{u-x_0}{1-x_0}\big)^{\beta},v\}]$ such that the equality \eqref{eq:stastical B} holds,  and for each $(u,v)\in [0,1]^2\setminus D_{L,x_0}$, $\mathbb{P}(U\le u, V \le v )=\mathbb{P}(1-v<U\le u)=\mathbb{P}(1-u<V\le v)$. 
\end{example}

\subsection{Concordance order}

We begin by recalling the definition of  concordance order between two (quasi-)copulas. For two quasi-copulas (or  copulas) $A$ and $B$, if $A(u,v)\leq B(u,v)$ for all $(u,v)\in[0,1]^2$, then $B$ is said to more concordant than $A$, written $A\prec B$ (see \cite{Nelsen(2006)}, for further details). 
The following proposition shows that the concordance order of bound  (quasi-)copulas  $B_{\Gamma_{\phi_{x_0}}}$ and $A_{\Gamma_{\phi_{x_0}}}$ is determined by the pointwise ordering of their sub-track  sections.

\begin{prop}
\label{prop: co} 

($i$) 
Let $B_{\Gamma_{\phi_{x_0}}}$ and $B_{\tilde{\Gamma}_{\phi_{x_0}}}$ be two copulas of type \eqref{eq:Bx} with the sub-track sections $\Gamma_{\phi_{x_0}}$ and $\tilde{\Gamma}_{\phi_{x_0}}$, respectively.
Then $B_{\Gamma_{\phi_{x_0}}}\prec B_{\tilde{\Gamma}_{\phi_{x_0}}}$ if and only if\ \   $\Gamma_{\phi_{x_0}}(t)\le \tilde{\Gamma}_{\phi_{x_0}}(t)$ for all $t\in[0,1-x_0]$.

($ii$) 
Let $A_{\Gamma_{\phi_{x_0}}}$ and $A_{\tilde{\Gamma}_{\phi_{x_0}}}$ be two quasi-copulas of type \eqref{eq:G copula1*} with the sub-track sections $\Gamma_{\phi_{x_0}}$ and $\tilde{\Gamma}_{\phi_{x_0}}$, respectively. 
Then $A_{\Gamma_{\phi_{x_0}}}\prec A_{\tilde{\Gamma}_{\phi_{x_0}}}$ if and only if\ \  $\Gamma_{\phi_{x_0}}(t)\le \tilde{\Gamma}_{\phi_{x_0}}(t)$ for all $t\in[0,1-x_0]$.

\end{prop}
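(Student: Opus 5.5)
Both parts follow one scheme. The "if" direction comes from the monotone dependence of the defining formulas on the section. The "only if" direction comes from evaluating along the $x_0$-curve, where each bound reproduces its own section.

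\emph{Necessity.} Suppose $B_{\Gamma_{\phi_{x_0}}}\prec B_{\tilde{\Gamma}_{\phi_{x_0}}}$ (respectively $A_{\Gamma_{\phi_{x_0}}}\prec A_{\tilde{\Gamma}_{\phi_{x_0}}}$). Evaluate both sides at the point $\big(x_0+\phi^{-1}_{x_0}(t),t\big)$ for $t\in[0,1-x_0]$. By Proposition \ref{prop:Bertino copula*} (respectively the lemma showing $A_{\Gamma_{\phi_{x_0}}}\in\mathbb{Q}_{\Gamma_{\phi_{x_0}}}$), the two values are $\Gamma_{\phi_{x_0}}(t)$ and $\tilde{\Gamma}_{\phi_{x_0}}(t)$. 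Hence $\Gamma_{\phi_{x_0}}\le\tilde{\Gamma}_{\phi_{x_0}}$ on $[0,1-x_0]$.

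\emph{Sufficiency for (i).} Assume $\Gamma_{\phi_{x_0}}\le \tilde{\Gamma}_{\phi_{x_0}}$. Then $\hat\Gamma^{I}_{\phi_{x_0}}\ge\hat{\tilde\Gamma}^{I}_{\phi_{x_0}}$ and $\hat\Gamma^{II}_{\phi_{x_0}}\ge\hat{\tilde\Gamma}^{II}_{\phi_{x_0}}$ pointwise, since each is obtained by subtracting the section from a function that does not depend on it. Consequently, on $D_{L,x_0}\cap T_{L,\phi_{x_0}}$ and on $D_{L,x_0}\cap T_{U,\phi_{x_0}}$, the minima over the same interval $[v,\phi_{x_0}(u-x_0)]$ (respectively $[\phi_{x_0}(u-x_0),v]$) are ordered the same way. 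Subtracting them from the common terms $v$ (respectively $u-x_0$) in \eqref{eq:Bx} gives $B_{\Gamma_{\phi_{x_0}}}(u,v)\le B_{\tilde{\Gamma}_{\phi_{x_0}}}(u,v)$ there. Outside $D_{L,x_0}$ both functions equal $W$. The only detail to check is that the region partition in \eqref{eq:Bx} depends on $\phi_{x_0}$ alone and not on the section, which is immediate from the definitions of $T_{U,\phi_{x_0}}$ and $T_{L,\phi_{x_0}}$.

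\emph{Sufficiency for (ii).} The same pointwise inequalities give $q^{I}_{\phi_{x_0}}\ge \tilde q^{I}_{\phi_{x_0}}$ and $q^{II}_{\phi_{x_0}}\ge \tilde q^{II}_{\phi_{x_0}}$ at every $(u,v)$. This holds because $m_{\phi_{x_0}}$ and $M_{\phi_{x_0}}$, and hence the intervals over which the maxima are taken, depend only on $\phi_{x_0}$. Then in \eqref{eq:G copula1*} we have $v-q^{II}_{\phi_{x_0}}\le v-\tilde q^{II}_{\phi_{x_0}}$ on $T_{U,\phi_{x_0}}$ and $u-x_0-q^{I}_{\phi_{x_0}}\le u-x_0-\tilde q^{I}_{\phi_{x_0}}$ on $T_{L,\phi_{x_0}}$. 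Taking the minimum with the common term $u$ or $v$ preserves these inequalities. No step is genuinely hard: the only point requiring care is that both bounds use the same region decomposition and the same integration intervals for both sections, so that the comparison reduces to comparing $\hat\Gamma$'s pointwise.
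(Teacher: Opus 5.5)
Your proposal is correct and follows the paper's approach. The regions $T_{U,\phi_{x_0}}$, $T_{L,\phi_{x_0}}$, $D_{L,x_0}$ and the intervals $[m_{\phi_{x_0}},M_{\phi_{x_0}}]$ depend only on $\phi_{x_0}$, so both bounds are monotone in the section, and this is exactly the paper's termwise comparison of the minima (resp.\ maxima). For the ``only if'' direction, your evaluation at $\big(x_0+\phi^{-1}_{x_0}(t),t\big)$, where each bound reproduces its own section, makes rigorous what the paper compresses into a pointwise ``if and only if'' claim about the minima; at a single point $(u,v)$ that claim would not by itself force $\Gamma_{\phi_{x_0}}\le\tilde{\Gamma}_{\phi_{x_0}}$ on all of $[0,1-x_0]$.
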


\begin{proof} \textit{(i)}. Note that when $(u,v)\in  D_{L,x_0}$ with $\phi_{x_0}(u-x_0)> v$, 
$-\min\{t-\Gamma_{\phi_{x_0}}(t)|t\in[v,\phi_{x_0}(u-x_0)]\}\le- \min\{t-\tilde{\Gamma}_{\phi_{x_0}}(t)|t\in[v,\phi_{x_0}(u-x_0)]\}$ if and only if $\Gamma_{\phi_{x_0}}(t)\le \tilde{\Gamma}_{\phi_{x_0}}(t)$ for all $t\in[0,1-x_0]$. Then, from the expression  \eqref{eq:Bx} of the copula $B_{\Gamma_{\phi_{x_0}}}$, it follows that, when $(u,v)\in  D_{L,x_0}$ with  $\phi_{x_0}(u-x_0)> v$, $B_{\Gamma_{\phi_{x_0}}}(u,v)\le  B_{\tilde{\Gamma}_{\phi_{x_0}}}(u,v)$ if and only if $\Gamma_{\phi_{x_0}}(t)\le \tilde{\Gamma}_{\phi_{x_0}}(t)$ for all $t\in[0,1-x_0]$. Similarly, when $(u,v)\in  D_{L,x_0}$ with $v\ge  \phi_{x_0}(u-x_0)$, $B_{\Gamma_{\phi_{x_0}}}(u,v)\le  B_{\tilde{\Gamma}_{\phi_{x_0}}}(u,v)$ if and only if $\Gamma_{\phi_{x_0}}(t)\le \tilde{\Gamma}_{\phi_{x_0}}(t)$ for all $t\in[0,1-x_0]$. Therefore, the statement ($i$) holds. 

\textit{(ii)}. Since the argument is completely analogous to that of the statement ($i$), it is omitted.
\end{proof}

\begin{example} For a fixed $x_0\in(0,1)$, 
consider the $x_0$-curve $\phi_{x_0}(t)=\frac{(1-x_0)t}{2(1-x_0)-t}$ and the  sub-track section $\Gamma^{\theta}_{\phi_{x_0}}(t)=\frac{t\big((2-x_0)t+x_0(1-x_0)\big)}{t+(1-x_0)\big(1-\theta(1-x_0-t)(1-t)\big)}$, $\theta\in[-1,1]$, $t\in[0,1-x_0]$. Note that for any fixed $t\in[0,1]$, the sub-track section $\Gamma^{\theta}_{\phi_{x_0}}(t)$ is nondecreasing with respect to $\theta$. Then from Proposition \ref{prop: co}, we know that for the  copula of type \eqref{eq:Bx}  with the sub-track section $\Gamma^{\theta}_{\phi_{x_0}}$, which is expressed as
\begin{equation*}
B_{\Gamma^{\theta}_{\phi_{x_0}}}(u,v)	=	\left\{\begin{array}{ll}
		v-\min\Big\{t-\frac{t\big((2-x_0)t+x_0(1-x_0)\big)}{t+(1-x_0)\big(1-\theta(1-x_0-t)(1-t)\big)}| & \\ 	 
	
\ \ \ \ \ \ \ \ \ \ \ \ \ \ \ \ \ \ \ \ \ \ \ \ \ \ \ \ \ \	t\in	[v,\frac{(1-x_0)(u-x_0)}{2(1-x_0)-(u-x_0)}] \Big\}, & (u,v)\in D_{L, x_0}  \ {\rm and}\  v<\frac{(1-x_0)(u-x_0)}{2(1-x_0)-(u-x_0)},\\
		u-x_0-\min\Big\{\frac{2(1-x_0)t}{1-x_0+t}-\frac{t\big((2-x_0)t+x_0(1-x_0)\big)}{t+(1-x_0)\big(1-\theta(1-x_0-t)(1-t)\big)}| & \\ 

\ \ \ \ \ \ \ \ \ \ \ \ \ \ \ \ \ \ \ \ \ \ \ \ \ \ \ \ \ \	t\in	[\frac{(1-x_0)(u-x_0)}{2(1-x_0)-(u-x_0)},v] \Big\},& (u,v)\in D_{L, x_0}\ {\rm and} \ v\ge \frac{(1-x_0)(u-x_0)}{2(1-x_0)-(u-x_0)}, \\
		W(u,v),&  \ {\rm otherwise}, 
	\end{array}\right.
\end{equation*}
$B_{\Gamma^{\theta_1}_{\phi_{x_0}}}\prec B_{\Gamma^{\theta_2}_{\phi_{x_0}}}$ if and only if $\theta_1\le \theta_2$. And for the quasi-copula 
$$A_{\Gamma^{\theta}_{\phi_{x_0}}}(u,v)
=\left\{\begin{array}{ll}
	\min\Big\{u, v-\max\{t-\frac{t\big((2-x_0)t+x_0(1-x_0)\big)}{t+(1-x_0)\big(1-\theta(1-x_0-t)(1-t)\big)}|t\in[\frac{(1-x_0)\max\{u-x_0,0\}}{2(1-x_0)-\max\{u-x_0,0\}},v] \}\Big\}, & (u,v)\in T_{U,\phi_{x_0}},\\
	\min\Big\{v, u-x_0-\max\{\frac{2(1-x_0)t}{1-x_0+t}-\frac{t\big((2-x_0)t+x_0(1-x_0)\big)}{t+(1-x_0)\big(1-\theta(1-x_0-t)(1-t)\big)}|t\in[v,\frac{(1-x_0)(u-x_0)}{2(1-x_0)-(u-x_0)}] \}\Big\},& (u,v)\in T_{L,\phi_{x_0}},
\end{array}\right.$$
it also holds that   $A_{\Gamma^{\theta_1}_{\phi_{x_0}}}\prec A_{\Gamma^{\theta_2}_{\phi_{x_0}}}$ if and only if $\theta_1\le \theta_2$.
\end{example}

	\section{\label{sec:Conclusion}Conclusions}
	
	In this paper, for a prescribed $x_0$-curve $\phi_{x_0}$ joining the points $(x_0,0)$ and $(1,1-x_0)$, where $0\le x_0<1$,  and a given sub-track section described by the $\phi_{x_0}$-function $\Gamma_{\phi_{x_0}}$, we propose 
	construction methods for quasi-copulas and copulas whose sub-track section along the $x_0$-curve $\phi_{x_0}$ coincides with $\Gamma_{\phi_{x_0}}$, and establish the best-possible bounds for the class of quasi-copulas sharing the common sub-track  section $\Gamma_{\phi_{x_0}}$. Using 
		three known (quasi-) copulas  $B_{\Gamma_{\phi}}$, $K_{\Gamma_{\phi}}$ and $A_{\Gamma_{\phi}}$ with the given track section $\Gamma_{\phi}$, together with the technique based on the $W$-ordinal sum, we construct (quasi-)copulas  $B_{\Gamma_{\phi_{x_0}}}$, $K_{\Gamma_{\phi_{x_0}}}$ and $G_{\Gamma_{\phi_{x_0}}}$ with the prescribed  sub-track section $\Gamma_{\phi_{x_0}}$, as well as the symmetric quasi-copulas $B^*_{\Gamma_{\phi_{x_0}}}$, $K^*_{\Gamma_{\phi_{x_0}}}$ and $G^*_{\Gamma_{\phi_{x_0}}}$ sharing the same sub-track section $\Gamma_{\phi_{x_0}}$. A symmetric copula $C_{\Gamma_{\phi_{x_0}}}$ with the prescribed sub-track section $\Gamma_{\phi_{x_0}}$ is also constructed. The function $B_{\Gamma_{\phi_{x_0}}}$ is shown to be the best-possible lower bound for the class of quasi-copulas sharing the common sub-track section $\Gamma_{\phi_{x_0}}$, and it is also the best-possible lower bound for the class of copulas with the same sub-track section. A quasi-copula $A_{\Gamma_{\phi_{x_0}}}$ is constructed and proved to be  the best-possible upper bound for the class of quasi-copulas with the sub-track section $\Gamma_{\phi_{x_0}}$. Several properties of these bound (quasi-)copulas $B_{\Gamma_{\phi_{x_0}}}$ and $A_{\Gamma_{\phi_{x_0}}}$ are also examined, including their comparison with bound (quasi-)copulas having a given track section or a sub-diagonal section, as well as their statistical characteristic and concordance order.

	\subsection*{Acknowledgement} The research was supported by the National Natural Science Foundation of China (Grants
	No. 12661104, 72271113) and the Natural Science Foundation of Jiangxi Province (Grants No. 20232ACB201003). All authors share first authorship and
	are listed alphabetically.

	\bibliographystyle{elsarticle-harv}
	\bibliography{curlinear}

\end{document}